\tikzset{cross/.style={cross out, thick, draw=black, minimum size=2*(#1-\pgflinewidth), inner sep=0pt, outer sep=0pt},
cross/.default={2.5pt}}
\newcommand{\Zeile}[5]{#1 & #2 & #3 & #4 & #5 }
\numberwithin{equation}{section}
\definecolor{color0}{rgb}{0.12156862745098,0.466666666666667,0.705882352941177}
\definecolor{color1}{rgb}{1,0.498039215686275,0.0549019607843137}
\definecolor{color2}{rgb}{0.172549019607843,0.627450980392157,0.172549019607843}
\definecolor{color3}{rgb}{0.83921568627451,0.152941176470588,0.156862745098039}
\definecolor{color4}{rgb}{0.580392156862745,0.403921568627451,0.741176470588235}
\definecolor{color5}{rgb}{0.549019607843137,0.337254901960784,0.294117647058824}
\definecolor{color6}{rgb}{0.890196078431372,0.466666666666667,0.76078431372549}
\definecolor{legendcolor}{rgb}{0.0,0.0,0.0}
\definecolor{very-light-gray}{gray}{0.75}
\definecolor{light-gray}{gray}{0.69}
\definecolor{new-blue}{rgb}{0.0,0.0,0.8}
\definecolor{light-blue}{rgb}{0.4,0.45,1}
\newcommand{\bulletpoint}[1]{\upshape(#1)}
\definecolor{myBlue3}{RGB}{60,124,155}
\newcommand{\GL}{Ginzburg--Landau\xspace}
\newtheorem{theorem}{Theorem}[section]
\newtheorem{lemma}[theorem]{Lemma}
\newtheorem{corollary}[theorem]{Corollary}
\newtheorem{proposition}[theorem]{Proposition}
\newtheorem{definition}[theorem]{Definition}
\newtheorem{remark}[theorem]{Remark}
\newtheorem{assumption}[theorem]{Assumption}
\newcommand{\Frechet}{Fréchet\xspace}
\newcommand{\wepo}{well-posedness\xspace}
\DeclareMathOperator{\Real}{Re}
\DeclareMathOperator{\Imag}{Im}
\DeclareMathOperator{\vol}{vol}
\newcommand{\ci}{\mathrm{i}}
\newcommand{\kommentare}[1]{}
\newcommand{\dualp}[2]{ \langle #1,#2 \rangle}
\newcommand{\abilmag}[2]{a_A(#1,#2)}
\newcommand{\abilmagstabsym}[2]{\hat{a}_\kappa(#1,#2)}
\newcommand{\ipsymLtwo}[2]{ m ( #1,#2 ) }
\newcommand{\dualHone}{(H^1)'}
\newcommand{\HonekappaSpace}{H^1_\kappa}
\newcommand{\HtwokappaSpace}{H^2_\kappa}
\newcommand{\Honekappa}[1]{\norm{\HonekappaSpace}{#1}}
\newcommand{\Htwokappa}[1]{\norm{\HtwokappaSpace}{#1}}
\newcommand{\Honekappaminus}[1]{\norm{(\HonekappaSpace)'}{#1}}
\newcommand{\LOD}{{\mbox{\rm\tiny LOD}}}
\newcommand{\h}{h}
\newcommand{\testfun}{\varphi}
\newcommand{\testfunTWO}{v}
\newcommand{\testfunTHREE}{w}
\newcommand{\testfunFOUR}{z}
\newcommand{\testfunh}{\varphi_{\h}}
\newcommand{\Ltwoproj}{\pi_{\h}}
\newcommand{\projLtwoisol}{\textup{R}_{\kappa,\h}^{\perp}}
\newcommand{\Ltwotwist}{P_{\ci u}^{\perp}}
\newcommand{\RitzprojLOD}{\textup{R}_{\kappa,\h}^\LOD} 
\newcommand{\projLtwoisolLOD}{\textup{R}_{\kappa,\h}^{\perp,\LOD}}
\newcommand{\errh}{e_{\h}}
\newcommand{\errHmone}{\varepsilon_\h}
\newcommand{\errHmonelin}{\errHmone^{\text{lin}}}
\newcommand{\errHmonenonlin}{\errHmone^{\text{nonlin}}}
\newcommand{\Cbnd}{C_{\text{bnd}}}
\newcommand{\Ccoer}{C_{\text{coe}}}
\newcommand{\Csol}{C_{\textup{sol}}(\sol,\kappa)}
\newcommand{\Csolinv}{C^{-1}_{\textup{sol}}(\sol,\kappa)}
\newcommand{\CsolH}{C_{\textup{sol},\h}(\kappa)}
\newcommand{\Honeperp}{H^1_{\ci \sol}}
\newcommand{\energy}{E}
\renewcommand{\div}{\operatorname{div}}
\newcommand{\curl}{\operatorname{curl}}
\newcommand{\norm}[2]{\lvert| #2 \rvert|_{#1}}
\newcommand{\sol}{u}
\newcommand{\solh}{\sol_{\h}}
\newcommand{\solhn}[1]{\sol_{{\h}_{#1}}}
\newcommand{\solhnTwist}[1]{\widetilde{\sol}_{{\h}_{#1}}}
\newcommand{\solhLOD}{\solh^{\LOD}}
\newcommand{\Nbh}{U} 
\newcommand{\MagF}{A}
\newcommand{\MagFinfty}{A_\infty}
\newcommand{\stabPar}{\beta}
\newcommand{\R}{\mathbb{R}}
\newcommand{\C}{\mathbb{C}}
\newcommand{\VS}{H^1} 
\newcommand{\VSh}{V_{\h}}
\newcommand{\VShLOD}{V_{\h}^{\LOD}}
\newcommand{\VShperp}{V_{\h}^\perp}
\newcommand{\Th}{\mathcal{T}_\h}
\newcommand{\Pone}{\mathcal{P}_1}
\begin{document}

\begin{center}
{\LARGE 
Error bounds for discrete minimizers of the Ginzburg--Landau energy
in the high-{\Large$\kappa$} regime\renewcommand{\thefootnote}{\fnsymbol{footnote}}\setcounter{footnote}{0}
 \hspace{-3pt}\footnote{The first author is funded by the Deutsche Forschungsgemeinschaft (DFG, German Research Foundation) -- Project-ID 258734477 -- SFB 1173. The second author also acknowledges the support by the Deutsche Forschungsgemeinschaft through the grant HE 2464/7-1.
Parts of this paper were written while the authors enjoyed the kind hospitality of the Institute for Mathematical Science in Singapore in February 2023 during the program on ``Multiscale Analysis and Methods for Quantum and Kinetic Problems''.}}\\[2em]
\end{center}

\begin{center}
{\large Benjamin D{\"o}rich\footnote[1]{Institute for Applied and Numerical Mathematics, 
	Karlsruhe Institute of Technology, DE-76149 Karlsruhe, Germany.\\ email: \href{mailto:benjamin.doerich@kit.edu}{benjamin.doerich@kit.edu}}
and 
Patrick Henning\footnote[2]{Department of Mathematics, Ruhr-University Bochum, DE-44801 Bochum, Germany.\\ email: \href{mailto:patrick.henning@rub.de}{patrick.henning@rub.de}} }\\[2em]
\end{center}

\begin{center}
{\large{\today}}
\end{center}

\begin{center}
\end{center}

\begin{abstract}
In this work, we study discrete minimizers of the Ginzburg--Landau energy in finite element spaces. Special focus is given to the influence of the Ginzburg--Landau parameter $\kappa$. This parameter is of physical interest as large values can trigger the appearance of vortex lattices. Since the vortices have to be resolved on sufficiently fine computational meshes, it is important to translate the size of $\kappa$ into a mesh resolution condition, which can be done through error estimates that are explicit with respect to $\kappa$ and the spatial mesh width $h$. For that, we first work in an abstract framework for a general class of discrete spaces, where we present convergence results in a problem-adapted $\kappa$-weighted norm. Afterwards we apply our findings to Lagrangian finite elements and a particular generalized finite element construction. In numerical experiments we confirm that our derived $L^2$- and $H^1$-error estimates are indeed optimal in $\kappa$ and $h$.
\end{abstract}

\vspace{12pt}
\noindent
\textbf{Key words.} Ginzburg--Landau equation,
superconductivity,
error analysis, 
finite element methods

\vspace{12pt}
\noindent
\textbf{AMS subject classification.}  Primary:
	65N12,   
	65N15. 
	%
	Secondary:
	65N30, 
	35Q56

\section{Introduction}

Superconductors are materials that allow to conduct electricity without any electrical resistance.
Letting 
$\Omega \subset \R^d$,  $d=2,3$, denote a
bounded polyhedral Lipschitz domain occupied by a superconducting material,  
the superconductivity in $\Omega$ can be modeled by a complex-valued wave
function $\sol : \Omega \rightarrow \mathbb{C}$ which is called the order parameter.  
The physical quantity of interest is $|\sol|^2$ which denotes the density of the superconducting electron pairs, where in the appropriate scaling, it holds $0 \leq |\sol|^2 \leq 1$.
This means that the material is not superconducting (in normal state) in $x\in\Omega$ if 
$|\sol(x)|^2  = 0$ 
and behaves like a perfect superconductor if $|\sol(x)|^2  = 1$. In between, different degrees
of superconductivity are possible. Of particular interest are mixed normal-superconducting states where both phases coexist in a lattice of quantized vortices \cite{Abr04}. 

Mathematically, the order parameter can be characterized as a
minimizer of the \GL energy (or Gibbs free energy) given by
\begin{equation} \label{eq:energy_functional_times_kappa2}
	\energy(\sol) = \frac{1}{2} \int_\Omega |\nabla \sol + \ci  \kappa \MagF \sol |^2 + \frac{\kappa^2}{2} \bigl( 1- |\sol|^2 \bigr)^2\,dx,
\end{equation}
where $\MagF\colon \Omega \to \R^d$ is a real-valued magnetic potential and $\kappa$ is the so-called \GL parameter, a material parameter that correlates with the temperature and determines the type of superconductor. By the necessary condition for local extrema, any minimizer $\sol  \in H^1(\Omega)$ must fulfill the condition $E^{\prime}(\sol)=0$, which is known as the \GL equation (GLE) and reads written out (cf. \cite{DuGP92})
\begin{align}
	\label{eq:ginzburg-landau-eqn}
	\Real  \int_\Omega \bigl( \nabla \sol + \ci \kappa \MagF \sol \bigr)  \cdot \bigl( \nabla \testfun + \ci \kappa \MagF \testfun \bigr)^*  
	+
	\kappa^2 \bigl( |\sol|^2 -1 \bigr)  \sol \testfun^* 
	\,dx = 0
	 \,
	  \mbox{ for } \testfun \in H^1(\Omega).
\end{align}
The real-valued magnetic potential $\MagF\colon \Omega \to \R^d$ in the GLE is typically unknown and can be inferred from an external magnetic field $H$ through the condition $H = \curl \MagF$ which is then added as a penalty term to the energy. In this work we consider the simplifying case that $\MagF$ is given, where the focus of our analysis is rather the influence of $\kappa$ on the accuracy of numerical approximations. In fact, the size of the parameter $\kappa$ is crucial for the appearance of vortices \cite{SaS07,SaS12,Ser99,SeS10}. On the one hand, if $\kappa$ is too small, no vortices will appear. On the other hand, the larger the value of $\kappa$, the more vortices appear in the lattice and the more point-like they become \cite{Aftalion99,SaS07}. The so-called high-$\kappa$ regime is hence the physically most interesting regime, but numerically it is also the most challenging one because it requires fine meshes to resolve all lattice structures. This raises an important practical question: how fine do we have to select the mesh size relative to the size of $\kappa$ so that the numerical approximations capture the correct vortex pattern?

Motivated by this question, the main goal of this work is to derive rigorous error bounds for the discrete minimizers with constants that are explicit and optimal in the spatial parameter $\h$ and the \GL parameter $\kappa$.

The first work where the approximation properties of discrete solutions to the stationary GLE were analyzed is the seminal SIAM Review article by Du, Gunzburger and Peterson \cite{DuGP92} (see also \cite{DuGP93} for periodic boundary conditions). The paper considers $H^1$-error estimates in finite element (FE) spaces for both the order parameter $u$ and the magnetic potential $\MagF$. The proof technique considers fixed (compact) intervals of $\kappa$-values and does not trace all $\kappa$-dependencies that enter through the size of these intervals and through uniform bounds for certain operator norms (that are linked to the chosen interval). The proof also considers a modified setting where $\energy^{\prime\prime}(u)$ is assumed to have a trivial kernel. However, the solutions to the GLE \eqref{eq:ginzburg-landau-eqn} can be only locally unique up to gauge transformations \cite{DuGP92}. In our case, these transformations are of the form $\sol \mapsto \exp( - \ci \theta) \sol$ for any $\theta\in\mathbb{R}$. In fact, it is easily seen that $\energy(\sol)=\energy( \exp( - \ci \theta) \sol)$ for all such $\theta$, which hence leads to a cluster of (qualitatively equivalent) solutions $\exp( - \ci \theta) \sol$. In turn, we have $\dualp{\energy^{\prime\prime}(\sol) \,  \exp( - \ci \tfrac{\pi}{2} ) \sol}{ \cdot } = 0$ which shows that $\energy^{\prime\prime}(\sol)$ can become singular. Hence, it makes sense to revisit the results \cite{DuGP92} with new proof techniques that allow us to follow all $\kappa$-dependencies and which allow us to avoid an assumption of local uniqueness. To the best of our knowledge, there are only two other works that address the convergence of discrete solutions to the stationary GLE: In \cite{DuNicolaidesWu98} a spatial discretization based on a covolume method is suggested, and in \cite{QuJu05} a finite volume discretization is used to solve the GLE on the sphere. 
In both papers the convergence of a subsequence of discrete solutions to a continuous minimizer is established, however without rates in $\h$ and $\kappa$.

With the goal to close this gap in the literature for finite element discretizations, our error analysis is performed in a general framework of FE methods, where we state our results under natural assumptions on the discrete spaces.
We first establish bounds on the discrete minimizers which are explicit in $\kappa$ and independent of $\h$.
This enables us to provide an abstract convergence result which identifies a suitable, continuous minimizer
of \eqref{eq:energy_functional_times_kappa2}. This a priori information is crucial in the derivation of the error bounds.
In order to exploit the structure of the problem, we have to study the properties of the second \Frechet derivative
of the energy $\energy$.
In particular, we carry over the inf-sup stability to our discrete setting under a smallness condition related to the product $\kappa \h$.
Let us emphasize that this is not a technical issue, but is indeed observed in our numerical experiments.
We employ a problem adapted
scalar product and its Ritz projection, which captures the
one-dimensional kernel of $\energy''$, to extract optimal error bounds not only for the $H^1$-norm, but also
new error bounds
for the $L^2$-norm and the energy. Our numerical experiments confirm that the predicted scaling of the
error in $\kappa$ and $\h$ is asymptotically sharp.

It is worth to mention that, aside from stationary Ginzburg--Lindau equations, there has been a lot of work on the numerical analysis of the time-dependent problem that describes the dynamics of superconductors, where we exemplarily refer to \cite{Chen97,CheD01,Du94b,Du94,Du97,DuGray96,DuanZhang22,GJX19,GaoS18,Li17,LiZ15,LiZ17} and the references therein. For works with a particular emphasis on tracing the influence of $\kappa$ in the estimates, we refer to \cite{Bartels2005,Bar05,BarMO11} for the case of vanishing vector potentials $\MagF$. Due to the different nature of the time-dependent problem, we will not discuss the equation any further here.

\smallskip

The rest of the paper is organized as follows:
In Section~\ref{sec:framework}, we introduce the analytical framework and present some results on continuous minimizers of \eqref{eq:energy_functional_times_kappa2}. In particular, we discuss the assumptions concerning uniqueness of minimizers.
For an abstract finite element space discretization, we present in Section~\ref{sec:main} our main results on the existence, boundedness, and approximation of discrete minimizers. An application to linear Lagrange finite elements is also given.
Numerical experiments which illustrate our theoretical findings
and confirm the convergence rates as well as the $\kappa$-dependency of our bounds are shown in Section~\ref{sec:num_exp}. 
The proofs of our main results are given in Section~\ref{sec:proofs}. Finally, in Section~\ref{sec:lod} we present a nonstandard application of the abstract result to spaces based on the Localized Orthogonal Decomposition.

\subsection*{Notation} For a complex number $z \in \mathbb{C}$, we use $z^*$ for the complex conjugate of $z$. In the whole paper we further denote by $L^2:=L^2(\Omega,\mathbb{C})$ the Hilbert space of $L^2$-integratable complex functions, but equipped with the {\it real} scalar product $\ipsymLtwo{\sol}{v} :=\Real \int_{\Omega} v \, w^* \,dx$ for $v,w\in L^2$. Hence, we interpret the space as a {\it real} Hilbert space. Analogously, we equip the space $H^1:=H^1(\Omega,\mathbb{C})$ with the scalar product $\ipsymLtwo{v}{w}+\ipsymLtwo{\nabla v}{\nabla w}$. This interpretation is crucial so that the Fr\'echet derivatives of $E$ are meaningful and exist on $H^1$. For any space $X$, we denote its dual space by $X'$. Note that this implies, that the elements of the dual space of $H^1$ consist of real-linear functionals, which are not necessarily complex-linear. For example, if $F(v):=m(f,v)$ for some $f \in L^2$, then it holds $F(\alpha \, v)=\alpha\, F(v)$ if $\alpha \in \mathbb{R}$, but in general {\it not} if $\alpha \in \mathbb{C}$.

In the following $C$ will denote a generic constant which is independent of $\kappa$ and the spatial mesh parameter $h$, but might depend on numerical constants as well as $\Omega$ and $\MagF$. 
In particular, we will write $\alpha \lesssim \beta$ if there is a constant $C$ independent of $\kappa$ and $h$ such that $\alpha \leq C \,\beta $.

\section{Analytical framework}
\label{sec:framework}

In this section, we present several results concerning the continuous minimizers of \eqref{eq:energy_functional_times_kappa2}.

From now on, we assume that the magnetic potential $\MagF$ satisfies 
\begin{equation} \label{eq:ass_\MagF_for_H2}
	\MagF \in L^\infty(\Omega,\mathbb{R}^d),
	\qquad
	\div \MagF = 0 \text{ in } \Omega,
	\qquad 
	\MagF \cdot \nu =0  \text{ on } \partial \Omega.
\end{equation}
The above assumption can be rigorously justified on convex and on smooth domains $\Omega$, cf. \cite{DuGP92}. However, we also note that the $L^\infty$-regularity of $A$ might not be available on general complex geometries with re-entrant corners.

Further, we introduce the dual pairing $\dualp{\sol }{ \testfun } \coloneqq \dualp{\sol }{ \testfun }_{\dualHone, H^1}$, and the bilinear forms
given by
\begin{align} 
	\ipsymLtwo{\sol}{\testfun} \coloneqq \Real \int_{\Omega} \sol \testfun^* \,dx ,
	\quad
	\abilmag{\sol}{\testfun} \coloneqq  \Real \int_\Omega \bigl( \nabla \sol + \ci \kappa \MagF \sol \bigr) \cdot  \bigl( \nabla \testfun + \ci \kappa \MagF \testfun \bigr)^*  \,dx ,
	\label{eq:def_forms_abil}
\end{align}
as well as the norm $\norm{H^1}{\testfun}^2 \coloneqq \norm{L^2}{\nabla \testfun }^2 + \norm{L^2}{ \testfun }^2$, the scaled norms 
\begin{equation} \label{eq:def_norms}
	\Honekappa{\testfun}^2 \coloneqq \norm{L^2}{\nabla \testfun }^2 + \kappa^2 \norm{L^2}{ \testfun }^2, 
	\qquad
	\Htwokappa{\testfun}^2 \coloneqq \norm{H^2}{\testfun}^2 + \kappa^2 \Honekappa{\testfun}^2,
\end{equation}
and the induced norm $\Honekappaminus{f} = \sup_{\testfun \in H^1} \frac{f(\testfun)}{\Honekappa{\testfun}}$.
We abbreviate $\MagFinfty = \norm{L^\infty}{\MagF}$,
and define the stabilized inner product on $\VS = H^1(\Omega)$ for $\sol,\testfun\in \VS$ by
\begin{equation} \label{eq:def_abilmagstabsym}
	\abilmagstabsym{\sol}{\testfun}  \coloneqq \abilmag{\sol}{\testfun} + \stabPar^2 \ipsymLtwo{\sol}{\testfun}_{L^2} ,
	\quad \text{with } \stabPar^2 = \kappa^2 (\MagFinfty^2 +1).
\end{equation}
We call it stabilized since this modification enables us to show boundedness and coercicvity of $\abilmagstabsym{\cdot}{\cdot}$ with respect to the
$\HonekappaSpace$-norm defined in \eqref{eq:def_norms}.

\begin{lemma}
	\label{lem:prop_bil}
	There are $\kappa$-independent constants $\Cbnd,\Ccoer>0$ such that
	for all $\testfunTWO,\testfun\in \VS$
	\begin{align}
		\abilmagstabsym{\testfunTWO}{\testfun} &\leq \Cbnd \, \Honekappa{\testfunTWO} \Honekappa{\testfun},
		\qquad
		\text{and}
		\qquad
		\abilmagstabsym{\testfun}{\testfun} \geq  \Ccoer \, \Honekappa{\testfun}^2.
	\end{align}
\end{lemma}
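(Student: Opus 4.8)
The plan is to exploit the fact that the magnetic Dirichlet form is a perfect square and that the stabilization constant $\stabPar^2 = \kappa^2(\MagFinfty^2 + 1)$ has been tuned precisely so that all magnetic cross terms are absorbed with $\kappa$-independent constants. The starting observation is that $\abilmag{\testfun}{\testfun} = \norm{L^2}{\nabla\testfun + \ci\kappa\MagF\testfun}^2 \geq 0$, so the magnetic gradient $(\nabla + \ci\kappa\MagF)$ behaves like a genuine seminorm and can be manipulated with the triangle inequality in $L^2$.

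For the coercivity estimate I would first use the triangle inequality together with $\MagF$ real and $|\MagF| \leq \MagFinfty$ a.e.\ to write $\norm{L^2}{\nabla\testfun} \leq \norm{L^2}{\nabla\testfun + \ci\kappa\MagF\testfun} + \kappa\MagFinfty\norm{L^2}{\testfun}$. Squaring and applying $(a+b)^2 \leq 2a^2 + 2b^2$ gives $\norm{L^2}{\nabla\testfun}^2 \leq 2\,\abilmag{\testfun}{\testfun} + 2\kappa^2\MagFinfty^2\norm{L^2}{\testfun}^2$. Adding $\kappa^2\norm{L^2}{\testfun}^2$ to both sides and using that $\kappa^2(2\MagFinfty^2 + 1) \leq 2\stabPar^2$ then yields $\Honekappa{\testfun}^2 \leq 2\,\abilmag{\testfun}{\testfun} + 2\stabPar^2\norm{L^2}{\testfun}^2 = 2\,\abilmagstabsym{\testfun}{\testfun}$, i.e.\ the claim holds with $\Ccoer = \tfrac12$.

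For boundedness I would apply the Cauchy--Schwarz inequality separately to the two contributions of $\abilmagstabsym{\testfunTWO}{\testfun}$, obtaining $|\abilmagstabsym{\testfunTWO}{\testfun}| \leq \norm{L^2}{(\nabla + \ci\kappa\MagF)\testfunTWO}\,\norm{L^2}{(\nabla + \ci\kappa\MagF)\testfun} + \stabPar^2\norm{L^2}{\testfunTWO}\,\norm{L^2}{\testfun}$. The magnetic gradient is then controlled from above by the triangle inequality as $\norm{L^2}{(\nabla + \ci\kappa\MagF)\testfun} \leq \norm{L^2}{\nabla\testfun} + \MagFinfty\,\kappa\norm{L^2}{\testfun} \leq (1 + \MagFinfty)\Honekappa{\testfun}$, since both $\norm{L^2}{\nabla\testfun}$ and $\kappa\norm{L^2}{\testfun}$ are dominated by $\Honekappa{\testfun}$. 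For the stabilization term I would regroup $\stabPar^2 = \kappa^2(\MagFinfty^2+1)$ as $\stabPar^2\norm{L^2}{\testfunTWO}\,\norm{L^2}{\testfun} = (\MagFinfty^2+1)\,(\kappa\norm{L^2}{\testfunTWO})(\kappa\norm{L^2}{\testfun}) \leq (\MagFinfty^2+1)\Honekappa{\testfunTWO}\Honekappa{\testfun}$. Combining the two parts gives the first inequality with $\Cbnd = (1+\MagFinfty)^2 + \MagFinfty^2 + 1$.

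Both constants depend only on $\MagFinfty = \norm{L^\infty}{\MagF}$ and are manifestly independent of $\kappa$, which is exactly the point of the statement. There is no deep analytic obstacle; the only thing that requires care is the bookkeeping of the $\kappa$-powers, so that every factor of $\kappa$ attached to a zeroth-order term is absorbed into the weighted norm $\Honekappa{\cdot}$ rather than leaking into the constant. The scaling $\stabPar^2 \sim \kappa^2$ built into the definition of $\abilmagstabsym{\cdot}{\cdot}$ is precisely what enables this, and checking that this single choice simultaneously closes both the coercivity and the boundedness estimate is the crux of the argument.
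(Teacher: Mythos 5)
Your proof is correct and follows essentially the same route as the paper: boundedness via Cauchy--Schwarz on both terms, and coercivity by absorbing the magnetic cross term into half the gradient contribution plus a $\kappa^2$-multiple of the $L^2$-term, which the stabilization $\stabPar^2 = \kappa^2(\MagFinfty^2+1)$ is designed to cover. The only cosmetic difference is that you run the absorption at the level of $L^2$-norms (triangle inequality plus $(a+b)^2 \leq 2a^2+2b^2$), whereas the paper applies Young's inequality pointwise to the integrand; both yield $\Ccoer = \tfrac12$ and $\kappa$-independent constants.
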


\begin{proof}
	The boundedness is a straightforward application of the Cauchy-Schwarz inequality. For the coercivity, we note that by Young's inequality it holds
	\begin{equation}
		|\nabla \testfun + \ci \kappa \MagF \testfun|^2 
		\geq
		|\nabla \testfun|^2 - 2   |\nabla \testfun| |\kappa \MagF \testfun| + |\kappa \MagF \testfun|^2
		\geq 
		\frac12 |\nabla \testfun|^2 - \kappa^2 \MagFinfty^2 |\testfun|^2.
	\end{equation}
	By the choice of $\beta$, we conclude the lower bound.
\end{proof}

A straightforward calculation shows that the
energy is  (real-)\Frechet differentiable and satisfies for all $\testfun\in H^1$
\begin{align} \label{eq:first_Frechet}
	\dualp{\energy'(\sol)}{\testfun}
	&= 
	\Real  \int_\Omega \bigl( \nabla \sol + \ci \kappa \MagF \sol \bigr)  \cdot \bigl( \nabla \testfun + \ci \kappa \MagF \testfun \bigr)^*  
	+
	\kappa^2 \bigl( |\sol|^2 -1 \bigr)  \sol \testfun^* 
	\,dx .
\end{align}
In particular any minimizer $\sol \in H^1$ satisfies $\energy'(\sol) = 0$.
The natural boundary condition is given by $\bigl( \nabla \sol + \ci \kappa \MagF \sol \bigr)  \cdot \nu =0$ on $\partial \Omega$. Since $\MagF$ has a vanishing trace by assumption, the natural boundary condition collapses to the standard homogenous Neumann condition  $\nabla \sol \cdot \nu =0$ on $\partial \Omega$.
Our first result collects the existence of a minimizer $\sol$ and its properties.

\begin{theorem} \label{thm:cont_minimizer}
	For every $\kappa \geq 0$ there exists a minimizer $\sol \in H^1$ of  \eqref{eq:energy_functional_times_kappa2}.
	Further, any minimizer fulfills
	\begin{align}
		|\sol(x)| &\leq 1 \,\mbox{ for all } x\in \Omega ,
		\qquad 
		\Honekappa{\sol}\lesssim \kappa , 
		\intertext{and if $\Omega$ is convex then $\sol \in H^2$ and}
		\norm{L^4}{\nabla \sol} &\lesssim \kappa,
		\qquad
		\Htwokappa{\sol} \lesssim \kappa^2,
	\end{align}
	where the hidden constants in the above estimates are independent of $\kappa$ and $\sol$.
\end{theorem}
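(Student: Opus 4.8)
I would prove the four assertions in sequence, each feeding the next, starting from the direct method. Since $\energy \ge 0$ and $\energy(0)=\tfrac{\kappa^2}{2}|\Omega|$, the infimum is finite. Combining the Young-type pointwise bound derived in the proof of Lemma~\ref{lem:prop_bil} with the expansion $(1-|\sol|^2)^2 = 1 - 2|\sol|^2 + |\sol|^4$, the indefinite $\norm{L^2}{\sol}^2$ contributions are absorbed into $\tfrac{\kappa^2}{2}\norm{L^4}{\sol}^4$, which gives coercivity of $\energy$ on $H^1$ for fixed $\kappa$. Weak lower semicontinuity follows because, along any $\sol_n \rightharpoonup \sol$ in $H^1$, the compact Sobolev embedding $H^1 \hookrightarrow L^4$ (valid for $d\le 3$) makes the quartic term continuous, while $\MagF \sol_n \to \MagF \sol$ in $L^2$, so $\nabla \sol_n + \ci\kappa\MagF\sol_n \rightharpoonup \nabla\sol + \ci\kappa\MagF\sol$ in $L^2$ and the quadratic term is weakly lsc. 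The direct method then yields a minimizer $\sol$.

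For the pointwise bound I would use the radial truncation $\tilde\sol = \min(f,1)\,\theta$, where $\sol = f\theta$ with $f=|\sol|$ and $|\theta|\equiv 1$. Since $\MagF$ is real and $|\theta|\equiv1$, a short computation gives the orthogonal splitting $|\nabla\sol + \ci\kappa\MagF\sol|^2 = |\nabla f|^2 + f^2 |\nabla\theta + \ci\kappa\MagF\theta|^2$ and its analogue for $\tilde\sol$; as $|\nabla \min(f,1)| \le |\nabla f|$, $\min(f,1)\le f$, and $(1-|\tilde\sol|^2)^2 \le (1-|\sol|^2)^2$ pointwise, both terms of $\energy$ decrease, whence $\energy(\tilde\sol)\le\energy(\sol)$. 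Minimality forces equality, so $|\sol|\le 1$ a.e. (the zero set of $\sol$ is handled by the diamagnetic inequality), and continuity from the regularity below upgrades this to every $x$. The energy bound is then immediate: comparing with the constant competitor gives $\energy(\sol)\le\tfrac{\kappa^2}{2}|\Omega|$, and dropping the nonnegative potential while invoking the Young bound of Lemma~\ref{lem:prop_bil} yields $\tfrac14\norm{L^2}{\nabla\sol}^2 \lesssim \energy(\sol) + \kappa^2\norm{L^2}{\sol}^2 \lesssim \kappa^2$, while $|\sol|\le1$ gives $\kappa^2\norm{L^2}{\sol}^2 \le \kappa^2|\Omega|$; hence $\Honekappa{\sol}\lesssim\kappa$.

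For $H^2$-regularity on convex $\Omega$, integrating the first variation by parts and using $\div\MagF=0$ and $\MagF\cdot\nu=0$ shows that $\sol$ is a weak solution of the Neumann problem $-\Delta\sol = 2\ci\kappa\MagF\cdot\nabla\sol - \kappa^2|\MagF|^2\sol - \kappa^2(|\sol|^2-1)\sol =: h$ with $\partial_\nu \sol = 0$. Using $\MagFinfty<\infty$, $|\sol|\le1$, and $\norm{L^2}{\nabla\sol}\lesssim\kappa$ one checks $h\in L^2$ with $\norm{L^2}{h}\lesssim\kappa^2$; note this uses only $\sol\in H^1$, so elliptic regularity applies with no circularity. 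The $H^2$-estimate for the Neumann Laplacian on a convex domain, whose constant is independent of $\kappa$, then gives $\sol\in H^2$ with $\norm{H^2}{\sol}\lesssim\kappa^2$, and therefore $\Htwokappa{\sol}^2 = \norm{H^2}{\sol}^2 + \kappa^2\Honekappa{\sol}^2 \lesssim \kappa^4$, i.e. $\Htwokappa{\sol}\lesssim\kappa^2$ (understood for $\kappa$ bounded away from $0$, the high-$\kappa$ regime of interest).

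The sharp $L^4$-gradient bound is the main difficulty: a global interpolation between $\norm{L^2}{\nabla\sol}\lesssim\kappa$ and $\norm{L^2}{\nabla\sol}\le\norm{H^2}{\sol}\lesssim\kappa^2$ only delivers the suboptimal $\kappa^{3/2}$, because $\sol$ concentrates its variation on the vortex scale $1/\kappa$. I would instead argue by rescaling. A Caccioppoli estimate with test function $\eta^2\sol$, for $\eta$ a cutoff on a ball of radius $1/\kappa$ — where the pointwise bound $|\sol|\le1$ is essential to control the potential contribution — yields the uniform local energy estimate $\int_{B_{1/\kappa}(x_0)} |\nabla\sol|^2 \lesssim \kappa^{2-d}$. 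The blow-up $\hat\sol(y)=\sol(x_0 + y/\kappa)$ then solves a $\kappa$-independent equation with coefficients bounded by $\MagFinfty$, solution bounded by $1$, and local energy bounded uniformly in $\kappa$; interior (and, near $\partial\Omega$, boundary) elliptic regularity gives $\norm{H^2(B_1)}{\hat\sol}\lesssim 1$, hence $\norm{L^4(B_1)}{\nabla\hat\sol}\lesssim 1$ by $H^1\hookrightarrow L^4$. Undoing the scaling produces $\int_{B_{1/\kappa}(x_0)}|\nabla\sol|^4 \lesssim \kappa^{4-d}$, and summing over a finitely-overlapping cover of $\Omega$ by $O(\kappa^d)$ such balls gives $\norm{L^4}{\nabla\sol}^4 \lesssim \kappa^4$. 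The genuinely delicate point is the uniform-in-$\kappa$ boundary regularity for the rescaled problems, where the scale-invariance of convexity is exactly what keeps the elliptic constants under control.
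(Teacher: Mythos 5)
Your treatment of existence, the bound $|\sol|\le 1$, the $\HonekappaSpace$-bound, and the $H^2$-estimate is essentially the paper's: direct method (the paper establishes coercivity through the identity $\energy(\sol)=\tfrac12\abilmagstabsym{\sol}{\sol}+\dots$, you absorb the indefinite terms into the quartic term --- equivalent), a truncation argument for the maximum-modulus bound (which the paper simply cites from \cite{DuGP92}), comparison with $\energy(0)$ for the gradient bound, and rewriting $\energy'(\sol)=0$ as a homogeneous Neumann problem for the Laplacian with an $L^2$ right-hand side of size $\kappa^2$, followed by elliptic regularity on the convex domain \cite{Grisvard}.

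The $L^4$-estimate is where you genuinely depart from the paper, and where you missed a much shorter argument. The paper neither interpolates nor rescales; it uses the natural boundary condition $\nabla\sol\cdot\nu=0$ and integrates by parts:
\begin{equation}
	\norm{L^4}{\nabla \sol}^4
	=
	\int_\Omega \nabla \sol \cdot \nabla \sol^* \, |\nabla \sol|^2 \,dx
	=
	- \int_\Omega \sol \, \div \bigl(\nabla \sol^* |\nabla \sol|^2  \bigr) \,dx
	\lesssim
	\norm{L^\infty}{\sol}
	\norm{H^2}{\sol}
	\norm{L^4}{\nabla \sol}^2,
\end{equation}
so that $|\sol|\le1$ and $\norm{H^2}{\sol}\lesssim\kappa^2$ give $\norm{L^4}{\nabla\sol}\lesssim\kappa$ in three lines. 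Your blow-up at the vortex scale $1/\kappa$ (Caccioppoli estimate, rescaling, local elliptic regularity, finite-overlap covering) reaches the same exponent and is the standard technique of the Ginzburg--Landau literature; it even buys more, namely $\kappa$-uniform local bounds on balls of radius $1/\kappa$, and the interior part does not need the global convexity-based $H^2$ theory at all. But the price is exactly the step you flag and do not carry out: $H^2$-regularity up to the boundary for the rescaled problems, with constants uniform over the whole blow-up family. Scale invariance of convexity does give the domain-independent inequality $\norm{L^2}{D^2 v}\le \norm{L^2}{\Delta v}$, but to invoke it you must localize, and a cutoff $\eta$ produces the inhomogeneous Neumann datum $\hat{\sol}\,\partial_\nu \eta$ on the blown-up boundary; arranging $\partial_\nu\eta=0$ requires cutoffs adapted to the faces, edges and corners of the polyhedron and a corresponding case analysis. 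This is all doable, but it is pages of work that the paper's single identity replaces. Finally, your parenthetical restriction to $\kappa$ bounded away from zero is not a weakness relative to the paper: the paper's own elliptic estimate produces $1+\kappa^2$, so its claim $\Htwokappa{\sol}\lesssim\kappa^2$ also implicitly requires $\kappa\gtrsim1$.
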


\begin{proof}
	First note that the energy $\energy$ is continuous in $H^1(\Omega)$, and further weakly lower semi-continuous, see e.g., \cite[Thm.~1.6]{Struwe08}. In addition, a simple calculation shows
	\begin{equation}
		\energy(\sol) = \frac{1}{2}  \abilmagstabsym{\sol}{\sol} 
		+
		\frac{\kappa^2}{4}  \int_\Omega
		\bigl( 1 + \frac{\stabPar^2}{\kappa^2} - |\sol|^2 \bigr)^2  
		+  
		1
		-
		\bigl( 1 + \frac{\stabPar^2}{\kappa^2} \bigr)^2 \,dx,
	\end{equation}
	and hence $\energy(\sol) \to \infty$ as $\Honekappa{\sol} \to \infty$.
	The standard arguments then imply the existence of a minimizer, see e.g., \cite[Thm.~1.2]{Struwe08}.
	For the pointwise bound, we refer to \cite[Prop.~3.11]{DuGP92}, which implies a bound in $L^2$ independent of $\kappa$.  We further have 
	\begin{equation}
		\norm{L^2}{\nabla \sol} \leq \norm{L^2}{\nabla \sol + \ci \kappa \MagF \sol} + \kappa \MagFinfty \norm{L^2}{\sol} \lesssim \energy(0)^{1/2} + \kappa  \lesssim \kappa .
	\end{equation}
	Since $\energy'(\sol) = 0  $, we rearrange to
	\begin{align}
		\abilmag{\sol}{\testfun} =
		- \kappa^2 \Real \int_\Omega \bigl( |\sol|^2 -1 \bigr)  \sol \testfun^* 
		\,dx
		= \ipsymLtwo{ f }{\testfun} 
	\end{align}
	with $\norm{L^2}{f} \lesssim  \kappa^2 $, and obtain with \eqref{eq:ass_\MagF_for_H2}
	\begin{equation} 
		\Real \int_{\Omega}
		\nabla \sol \cdot \nabla \testfun^* 
		\,dx
		+ \Real \int_{\Omega}
		\sol \, \testfun^* 
		\,dx 
		= \ipsymLtwo{ f }{\testfun} 
		-
		\Real \int_{\Omega}
		\bigl(
		- 2 \ci \kappa \MagF \cdot \nabla \sol 
		+ \kappa^2 |\MagF|^2 \sol + \sol \bigr) \testfun^*
		\,dx .
	\end{equation}
	If $\Omega$ is convex, standard elliptic regularity theory for the Poisson problem with homogenous Neumann boundary condition (cf. \cite[Theorem 3.2.1.3]{Grisvard}) gives us $\sol \in H^2$ with 
	\begin{equation}
		\norm{H^2}{\sol} \lesssim  \norm{L^2}{f} + (1+\kappa^2) \norm{L^2}{\sol} + \kappa \norm{L^2}{\nabla \sol} 
		\lesssim \kappa^2,
	\end{equation}
	where we used the $L^2$- and $H^1$-bounds for $\sol$ in the last step. 
	Finally, we have
	\begin{equation}
		\norm{L^4}{\nabla \sol}^4
		=
		\int_\Omega \nabla \sol \cdot \nabla \sol^* |\nabla \sol|^2 \,dx 
		= 
		- \int_\Omega \sol \div \bigl(\nabla \sol^* |\nabla \sol|^2  \bigr) \,dx 
		\lesssim 
		\norm{L^\infty}{\sol}
		\norm{H^2}{\sol}
		\norm{L^4}{\nabla \sol}^2,
	\end{equation}
	which yields the last estimate.
\end{proof}

Since $u$ is a global minimizer of the energy $\energy$, it must not only hold $\langle E^{\prime}(u) , \testfun\rangle =0$ but also $\langle E^{\prime\prime}(u) \testfun , \testfun \rangle \ge 0$ for all $\testfun\in H^1$. Later we will make use of these conditions. For that we require a corresponding representation of the second \Frechet derivative of $\energy$. This and its properties are summarized in the following lemma.

\begin{lemma} \label{lem:Frechet_functional}

\begin{itemize}
	\item[{\normalfont (a)}] The energy is twice (real-)\Frechet differentiable and satisfies for $\testfun,\testfunTWO \in H^1$
	\begin{align}
		\dualp{\energy''(\sol) \testfunTWO}{\testfun}	 
		=  
		\Real  \int_\Omega 
		&\bigl( \nabla \testfunTWO + \ci \kappa \MagF \testfunTWO \bigr)  \cdot \bigl( \nabla \testfun + \ci \kappa \MagF \testfun \bigr)^*   
		\\
		&+
		\kappa^2 
		\bigl(  ( |\sol|^2 -1  ) \testfunTWO \testfun^*  + \sol^2 \testfunTWO^* \testfun^* + |\sol|^2 \testfunTWO \testfun^* \bigr)
		\,dx .
	\end{align}
	\item[{\normalfont (b)}] 
	For  $\testfun,\testfunTWO\in H^1$ it holds
	\begin{equation} \label{eq:E_prime_prime_sym}
		\dualp{\energy''(\sol) \testfunTWO}{\testfun}	 = \dualp{\energy''(\sol) \testfun}{\testfunTWO}	
		\quad
		\text{and}
		\quad
		|  \dualp{\energy''(\sol)  \testfunTWO}{\testfun}  | 
		\lesssim  \Honekappa{\testfunTWO} \Honekappa{\testfun} .
	\end{equation}
\end{itemize}
\end{lemma}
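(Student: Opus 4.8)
The plan is to obtain part (a) by differentiating the first derivative \eqref{eq:first_Frechet} once more in a direction $\testfunTWO\in H^1$. The magnetic Dirichlet term $\Real\int_\Omega(\nabla\sol+\ci\kappa\MagF\sol)\cdot(\nabla\testfun+\ci\kappa\MagF\testfun)^*\,dx$ is already linear in $\sol$, so its directional derivative is obtained simply by replacing $\sol$ with $\testfunTWO$ in the first slot, which produces the first line of the claimed formula. The only genuine computation is the differentiation of the cubic nonlinearity $(|\sol|^2-1)\sol=\sol\sol^*\sol-\sol$. Since $H^1$ is treated as a \emph{real} Hilbert space, I would apply the product rule by replacing exactly one factor at a time by $\testfunTWO$ (respectively $\testfunTWO^*$), treating $\sol$ and $\sol^*$ as independent real directions. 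This gives the Gâteaux derivative $2|\sol|^2\testfunTWO+\sol^2\testfunTWO^*-\testfunTWO$, and pairing it against $\testfun^*$ and regrouping reproduces precisely the second line of (a).

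To upgrade this to Fréchet differentiability I would expand the increment directly: writing $\testfunTHREE=\sol+\testfunTWO$ and multiplying out $|\testfunTHREE|^2\testfunTHREE-|\sol|^2\sol$, the part linear in $\testfunTWO$ recovers the Gâteaux derivative above, while the remainder equals $2|\testfunTWO|^2\sol+\sol^*\testfunTWO^2+|\testfunTWO|^2\testfunTWO$. It then suffices to show that $\kappa^2\Real\int_\Omega(2|\testfunTWO|^2\sol+\sol^*\testfunTWO^2+|\testfunTWO|^2\testfunTWO)\,\testfun^*\,dx$ is of order $o(\norm{H^1}{\testfunTWO})\,\norm{H^1}{\testfun}$. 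The quadratic contributions are bounded via Hölder and the embedding $H^1\hookrightarrow L^4$ (valid for $d\le3$) by $\kappa^2\norm{L^\infty}{\sol}\,\norm{L^4}{\testfunTWO}^2\norm{L^2}{\testfun}\lesssim\kappa^2\norm{H^1}{\testfunTWO}^2\norm{H^1}{\testfun}$, and the cubic term analogously using $H^1\hookrightarrow L^6$. Being $O(\norm{H^1}{\testfunTWO}^2)$, the remainder is higher order, so $\energy$ is twice Fréchet differentiable with the stated second derivative.

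For part (b), symmetry follows by inspection once real parts are taken. The magnetic term is invariant under swapping $\testfunTWO$ and $\testfun$, since the swap yields the complex conjugate of the integrand and leaves the real part unchanged; the term $\kappa^2\sol^2\testfunTWO^*\testfun^*$ is already symmetric in $\testfunTWO,\testfun$; and for the two terms carrying the \emph{real} factors $(|\sol|^2-1)$ and $|\sol|^2$ one uses $\Real[c\,\testfunTWO\testfun^*]=\Real[c\,\testfun\testfunTWO^*]$ for real $c$. For the boundedness estimate, the magnetic term is handled by Cauchy–Schwarz together with $\norm{L^2}{\nabla\testfunTWO+\ci\kappa\MagF\testfunTWO}\le\norm{L^2}{\nabla\testfunTWO}+\MagFinfty\,\kappa\norm{L^2}{\testfunTWO}\lesssim\Honekappa{\testfunTWO}$. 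For the three nonlinear terms I would invoke the pointwise bound $|\sol|\le1$ from Theorem~\ref{thm:cont_minimizer}, which gives $||\sol|^2-1|\le1$, $|\sol^2|\le1$ and $|\sol|^2\le1$; each term is then controlled by $\kappa^2\norm{L^2}{\testfunTWO}\norm{L^2}{\testfun}=(\kappa\norm{L^2}{\testfunTWO})(\kappa\norm{L^2}{\testfun})\le\Honekappa{\testfunTWO}\Honekappa{\testfun}$.

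The delicate points are twofold. First, the real-differentiability in (a) must be handled with care because the nonlinearity is not holomorphic, so the product rule has to act on $\sol$ and $\sol^*$ separately. Second, and more importantly, the $\kappa$-bookkeeping in (b) is sharp only because the bound $|\sol|\le1$ lets every nonlinear coefficient be absorbed into the factor $\kappa^2$, which matches exactly the $\kappa^2\norm{L^2}{\cdot}^2$ weight in the $\HonekappaSpace$-norm; this is precisely where the hypothesis that $\sol$ is a minimizer, rather than an arbitrary $H^1$-function, enters.
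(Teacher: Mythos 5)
Your proof is correct and takes essentially the same approach as the paper's (very terse) proof: a direct computation of the second derivative using the real product rule on $\sol$ and $\sol^*$, symmetry read off from the real part in front of the integral, and the bound obtained from Cauchy--Schwarz on the magnetic term (this is exactly what Lemma~\ref{lem:prop_bil} provides) combined with the pointwise bound $|\sol|\leq 1$ from Theorem~\ref{thm:cont_minimizer}. Your explicit treatment of the Fréchet remainder via the embeddings $H^1\hookrightarrow L^4$ and $H^1\hookrightarrow L^6$ simply fills in what the paper dismisses as ``straightforward.''
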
 

\begin{proof}
	The \Frechet derivative is computed in a straightforward manner, and the symmetry follows from the representation by noting the real part in front of the integral. For the bound, we employ Lemma~\ref{lem:prop_bil} as well as $|\sol| \leq 1$.
\end{proof}

As explained in the introduction, a minimizer of \eqref{eq:energy_functional_times_kappa2} cannot be unique due to the invariance of the energy under complex rotations, i.e. if $\sol$ is a minimizer, then $ \exp( -\ci \phi) \sol$ is also a minimizer for any $\phi \in \R$. This property is known as gauge invariance and the mapping $\sol \mapsto \exp( -\ci \phi) \sol$ is called a gauge transformation. On polygonal domains, minimizers are believed to be locally unique, that means, that in a sufficiently small environment of $\sol$, the only other minimizers are exactly the ones obtained by gauge transformations. However, a general proof for this local uniqueness property is one of the great challenges of the field and has not yet been established. For partial results in various important settings, we refer to \cite{PacRiv00,Riv2002,SaS07,WeiWu2020} and the references therein.

In the following, we hence make the local uniqueness an (reasonable) assumption for our analysis. Furthermore, we later describe how to check the validity of the assumption numerically for any given setting so that we explicitly know if it is fulfilled. 

In the definition below we express the local uniqueness by curves passing through a minimizer. To be precise, we look at the energy level $\ell: t \mapsto E(\, \gamma(t) \, )$ for a smooth curve $\gamma : \mathbb{R} \rightarrow H^1$ with $\gamma(0)=\sol$. If $\sol$ is a minimizer of $E$, then $\ell(t)$ has a local minimum at $t=0$ (i.e. $\ell^{\prime}(0)=0$ and $\ell^{\prime\prime}(0)\ge0$). Furthermore, $\sol$ is locally unique up to gauge, if $\ell^{\prime\prime}(0)>0$ for all directions $\gamma^{\prime}(0)$ that are not parallel to $\ci \sol$. Note that the direction $\ci \sol$ is the tangent in $\sol$ on the circle line $t \mapsto \exp( -\ci t) \sol$, i.e., the direction in which the value of the energy does not change, cf. Figure \ref{illustration-tangent}. Since the energy is constant on the circle line, we naturally have $\ell^{\prime\prime}(0)=0$ in this direction. The following definition formalizes this type of local uniqueness. 

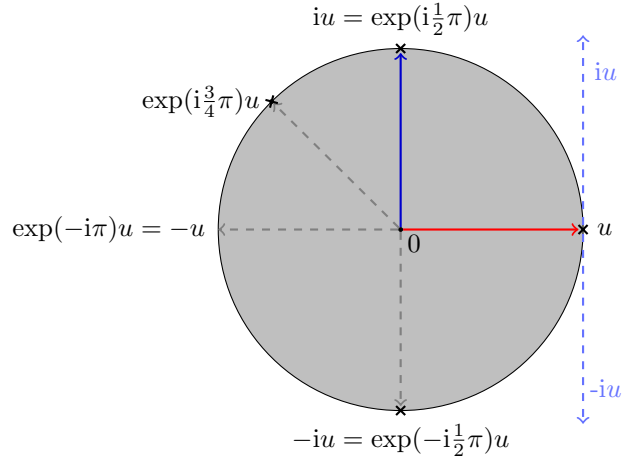
\begin{figure}
	\centering
	\begin{tikzpicture}  %
		[scale=0.6, point/.style = {draw, circle, fill=black, inner sep=0.5pt}]
		\filldraw[fill=light-gray,color=light-gray]  (0,0) node[]{} -- (4,0) node[]{} -- (0,4) node[]{}  -- cycle;
		\filldraw[fill=very-light-gray] (4,0) arc[start angle=0, end angle=360, radius=4]; 
		\node (C) at (0,0) [point]{}; 
		\node (iU) at (0,4) [cross,label=above:{$\ci \sol = \exp( \ci \tfrac{1}{2}\pi)\sol$}]{};
		\node (minusiU) at (0,-4) [cross,label=below:{$-\ci \sol = \exp(- \ci \tfrac{1}{2}\pi)\sol$}]{};
		\node (U) at (4,0) [cross,label=right:$\sol$]{};
		\node (E1) at (4, 4.5) []{}; 
		\node (E2) at (4, -4.5) []{};  
		\node (Utheta) at (-2.828427124748,2.828427124748) [point,label=left:{{$\exp( \ci \tfrac{3}{4}\pi)\sol$}}]{}; 
		\draw[->,thick, color=red] (C) -- (U);
		\draw[->,thick,color=new-blue] (C) -- (iU);
		\draw[->,thick,dashed,color=gray] (C) -- (minusiU);
		\draw[->,thick,dashed,color=gray] (C) -- (-4,0);
		\draw[->,thick, dashed,color=light-blue] (U) -- (E1);
		\draw[->,thick, dashed,color=light-blue] (U) -- (E2);
		\draw[->,thick, dashed,color=gray] (C) -- (Utheta);
		\node at (Utheta) [cross,rotate=30]{};
		\node at (4.5, 3.5) {{\color{light-blue}$\ci \sol$}};
		\node at (4.5, -3.5) {{\color{light-blue}-$\ci \sol$}};
		\node at (0.3, -0.3) {$0$};
		\node at (-6.4,0) {{$\exp(- \ci \pi)\sol=-\sol$}};
	\end{tikzpicture}
	\caption{For a given minimizer $\sol$, the figure illustrates the circle line parametrized by the $2\pi$-periodic curve $\gamma : t\mapsto \exp(- \ci t) \sol$ for $t \in [-\pi ,\pi)$. The tangent direction in $u$ is given by $\gamma^{\prime}(0) = \ci \sol$ and the energy $E$ is constant and minimal on the whole circle line, i.e. $\frac{\mbox{\rm d}^2}{\mbox{\rm d}t^2} E(\hspace{1pt}\gamma(t)\hspace{1pt})= 0$.}
	\label{illustration-tangent}
\end{figure}

\begin{definition}[Local uniqueness up to gauge transformation]
	\label{def-local-uniqueness}
	Let $\sol$ be a minimizer of \eqref{eq:energy_functional_times_kappa2}. 
	We call $u$ a {\rm locally unique minimizer up to gauge transformation} if, for all 
	smooth curves $\gamma : [-\pi,\pi) \rightarrow H^1$ with $\gamma(0)=\sol$, it holds 
	\begin{eqnarray*}
		\frac{\mbox{\rm d}^2}{\mbox{\rm d}t^2} E(\, \gamma(t) \, )\vert_{t=0} = 0
		\quad \Longleftrightarrow \quad 
		\gamma^{\prime}(0) \in \mbox{\rm span} \{ \ci \sol \} := \{ \alpha \, \ci \sol \, | \, \alpha \in \mathbb{R} \}.
	\end{eqnarray*}
	Note that $u$ being a minimizer always implies that
	$\frac{\mbox{\rm\tiny d}}{\mbox{\tiny\rm d}t} E(\, \gamma(t) \, )\vert_{t=0} = 0$
as well as
$\frac{\mbox{\tiny\rm d}^2}{\mbox{\tiny\rm d}t^2} E(\, \gamma(t) \, )\vert_{t=0} \ge 0 $, independent if it is locally unique or not.
\end{definition}
From now on, we assume local uniqueness for our error analysis. %
\begin{assumption} \label{ass:cinfsup}
	The minimizers $\sol$ of the Ginzburg-Landau energy \eqref{eq:energy_functional_times_kappa2} are locally unique up to gauge transformation in the sense of Definition~$\ref{def-local-uniqueness}$. 
\end{assumption}
For any minimizer $\sol$, the above assumption implies inf-sup stability of $E^{\prime\prime}(\sol)$ on the $\ipsymLtwo{\cdot}{\cdot}$-orthogonal complement of $\mbox{\rm span} \{ \ci \sol \}$ in $H^1$. To formalize and prove this result, we define the corresponding space by
\begin{equation}
	\Honeperp \coloneqq H^1 \cap ( \ci \sol)^\perp \coloneqq \{ \testfun \in H^1 \, | \, m(\ci \sol , \testfun ) = 0 \}.
\end{equation}
Note that $\Honeperp$ is a closed subspace of $H^1$ and that our error analysis will be naturally restricted to it. The claimed inf-sup stability is specified in the following proposition.
\begin{proposition}
	\label{proposition-inf-sup-stability}
	Let $\sol$ be a minimizer of \eqref{eq:energy_functional_times_kappa2} and let Assumption \ref{ass:cinfsup} be fulfilled, i.e., $\sol$ is locally unique up to gauge transformation. Then, there is a constant $\Csol \gtrsim 1$ such that
	\begin{equation} \label{eq:inf_sup_condition_cont}
		\Csolinv  \leq \inf_{\testfunTWO \in \Honeperp} \sup_{\testfun\in \Honeperp} \frac{\dualp{\energy''(\sol)  \testfunTWO}{\testfun}}{\Honekappa{\testfunTWO} \Honekappa{\testfun}  } .
	\end{equation}
	Furthermore, $\energy''(\sol) $ is singular in the direction $\ci \sol$, i.e.,
	\begin{eqnarray*}
		\dualp{\energy''(\sol) \, \ci \sol}{\testfun} = 0 \qquad \mbox{for all } \testfun \in H^1.
	\end{eqnarray*}
\end{proposition}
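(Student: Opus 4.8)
The plan is to settle the one–dimensional kernel first and then to upgrade the positivity encoded in Assumption~\ref{ass:cinfsup} to a genuine coercivity estimate by a compactness argument.

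\textbf{The singular direction.} I would obtain $\dualp{\energy''(\sol)\,\ci\sol}{\testfun}=0$ directly from gauge invariance: every $\exp(-\ci t)\sol$ is again a global minimizer, so $\energy'(\exp(-\ci t)\sol)=0$ for all $t$, and differentiating this identity at $t=0$ (legitimate since $\energy\in C^2$ by Lemma~\ref{lem:Frechet_functional}) gives $\energy''(\sol)(-\ci\sol)=0$ in $(H^1)'$. Equivalently, one may insert $\testfunTWO=\ci\sol$ into the formula of Lemma~\ref{lem:Frechet_functional}(a): the integrand collapses to $\ci$ times the integrand of $\energy'(\sol)$, and testing with $\testfun$ and with $\ci\testfun$ shows that both its real and imaginary parts vanish because $\energy'(\sol)=0$. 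For a smooth curve $\gamma$ with $\gamma(0)=\sol$ and $\ell(t)=\energy(\gamma(t))$, the vanishing of $\energy'(\sol)$ reduces the second derivative to $\ell''(0)=\dualp{\energy''(\sol)\gamma'(0)}{\gamma'(0)}$; since any $\testfunTWO\in H^1$ is realized as $\gamma'(0)$ (e.g.\ $\gamma(t)=\sol+t\testfunTWO$), Assumption~\ref{ass:cinfsup} together with the singularity just shown is equivalent to
\begin{equation*}
	\dualp{\energy''(\sol)\testfunTWO}{\testfunTWO}=0 \quad\Longleftrightarrow\quad \testfunTWO\in\linhull\{\ci\sol\},
\end{equation*}
while $\sol$ being a minimizer gives $\dualp{\energy''(\sol)\testfunTWO}{\testfunTWO}\geq0$ for all $\testfunTWO\in H^1$.

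\textbf{Reduction to coercivity.} By the symmetry of $\energy''(\sol)$ (Lemma~\ref{lem:Frechet_functional}(b)), choosing $\testfun=\testfunTWO$ in the supremum of \eqref{eq:inf_sup_condition_cont} shows that the inf-sup quotient is bounded below by $\inf_{\testfunTWO\in\Honeperp}\dualp{\energy''(\sol)\testfunTWO}{\testfunTWO}/\Honekappa{\testfunTWO}^2$, whereas the boundedness in Lemma~\ref{lem:Frechet_functional}(b) bounds the quotient above by a $\kappa$-independent constant, which is what yields $\Csol\gtrsim1$. Hence it suffices to produce a constant $c>0$ with $\dualp{\energy''(\sol)\testfunTWO}{\testfunTWO}\geq c\,\Honekappa{\testfunTWO}^2$ for all $\testfunTWO\in\Honeperp$, and then to set $\Csolinv=c$.

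\textbf{The compactness argument.} Using Lemma~\ref{lem:Frechet_functional}(a) and the definition \eqref{eq:def_abilmagstabsym} I would split
\begin{equation*}
	\dualp{\energy''(\sol)\testfunTWO}{\testfunTWO} = \abilmagstabsym{\testfunTWO}{\testfunTWO} + \kappa^2\!\int_\Omega \bigl(2|\sol|^2-2-\MagFinfty^2\bigr)|\testfunTWO|^2 + \Real\bigl(\sol^2(\testfunTWO^*)^2\bigr)\,dx ,
\end{equation*}
so that $\energy''(\sol)$ is the sum of the $\Honekappa{\cdot}$-coercive form $\abilmagstabsym{\cdot}{\cdot}$ (Lemma~\ref{lem:prop_bil}) and a remainder $K(\testfunTWO)$ depending on $\testfunTWO$ only through $L^2$-quantities with coefficients bounded via $|\sol|\leq1$. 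For fixed $\kappa>0$ the norm $\Honekappa{\cdot}$ is equivalent to $\norm{H^1}{\cdot}$. Arguing by contradiction, suppose no such $c$ exists; then there are $\testfunTWO_n\in\Honeperp$ with $\Honekappa{\testfunTWO_n}=1$ and $\dualp{\energy''(\sol)\testfunTWO_n}{\testfunTWO_n}\to0$. After passing to a subsequence, reflexivity and the Rellich--Kondrachov theorem give $\testfunTWO_n\rightharpoonup\testfunTWO$ in $H^1$ and $\testfunTWO_n\to\testfunTWO$ in $L^2$, and the limit lies in the weakly closed subspace $\Honeperp$. Since $K$ is continuous under $L^2$-convergence and $\abilmagstabsym{\cdot}{\cdot}$ is weakly lower semicontinuous, passing to the limit yields $\dualp{\energy''(\sol)\testfunTWO}{\testfunTWO}\leq\liminf_n\dualp{\energy''(\sol)\testfunTWO_n}{\testfunTWO_n}=0$, hence $\dualp{\energy''(\sol)\testfunTWO}{\testfunTWO}=0$ by non-negativity. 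The reformulated assumption forces $\testfunTWO\in\linhull\{\ci\sol\}\cap\Honeperp=\{0\}$, because $\ipsymLtwo{\ci\sol}{\ci\sol}=\norm{L^2}{\sol}^2>0$ for a nontrivial minimizer. Thus $\testfunTWO_n\to0$ in $L^2$, so $K(\testfunTWO_n)\to0$, giving $\abilmagstabsym{\testfunTWO_n}{\testfunTWO_n}=\dualp{\energy''(\sol)\testfunTWO_n}{\testfunTWO_n}-K(\testfunTWO_n)\to0$, which contradicts $\abilmagstabsym{\testfunTWO_n}{\testfunTWO_n}\geq\Ccoer\Honekappa{\testfunTWO_n}^2=\Ccoer>0$.

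\textbf{Main obstacle.} The crux is exactly this last, non-quantitative step: Assumption~\ref{ass:cinfsup} only supplies \emph{pointwise positivity} of $\energy''(\sol)$ on $\Honeperp$, and promoting it to a uniform coercivity (a spectral gap away from the removed kernel direction $\ci\sol$) genuinely requires the compact embedding $H^1\hookrightarrow L^2$ together with the observation that the non-coercive part $K$ only sees $\testfunTWO$ through lower-order $L^2$-terms. This is also the reason why the constant $\Csol$ produced this way is not explicit and is allowed to depend on both $\sol$ and $\kappa$.
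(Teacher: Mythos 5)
Your proof is correct, but it takes a genuinely different route from the paper's. You reduce the inf-sup bound to honest coercivity of $\dualp{\energy''(\sol)\,\cdot}{\cdot}$ on $\Honeperp$ — legitimate here because the form is symmetric and positive semi-definite, so taking $\testfun=\testfunTWO$ in the supremum suffices — and you then obtain coercivity by a contradiction argument combining Rellich--Kondrachov compactness, weak lower semicontinuity of the coercive part $\abilmagstabsym{\cdot}{\cdot}$, and $L^2$-continuity of the lower-order remainder $K$; your splitting of $\energy''(\sol)$ into these two pieces is algebraically correct. The paper instead runs a spectral argument: it considers the eigenvalue problem $\dualp{\energy''(\sol)w_{\ell}}{\testfun}=\lambda_{\ell}\,\ipsymLtwo{w_{\ell}}{\testfun}$, identifies $\lambda_1=0$ with eigenfunction $\ci\sol$, invokes Courant--Fischer to conclude $\lambda_2>0$ from Assumption~\ref{ass:cinfsup}, and then combines injectivity on $\Honeperp$ with the G{\aa}rding inequality \eqref{gardinginequality} and the Fredholm alternative to obtain a bounded inverse, finally producing the inf-sup bound by testing with $v+z$, where $z$ solves an auxiliary shifted problem. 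Both arguments hinge on the same mechanism — the compact embedding $H^1\hookrightarrow L^2$ is what promotes the pointwise positivity of Assumption~\ref{ass:cinfsup} to a uniform gap, and in both cases $\Csol$ remains non-explicit in $(\sol,\kappa)$ — but your version is more elementary (no Fredholm theory, no auxiliary problem) and in fact proves the stronger statement that $\energy''(\sol)$ is coercive on $\Honeperp$, from which inf-sup is immediate. What the paper's longer detour buys is the eigenvalue $\lambda_2$ itself: its strict positivity is exhibited as equivalent to local uniqueness, which is precisely what Remark~\ref{rem:loc_uniq} and the experiments in Table~\ref{tab:eigs_L2} later exploit to verify Assumption~\ref{ass:cinfsup} computationally. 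Your treatment of the singular direction (differentiating $\energy'(\exp(-\ci t)\sol)=0$ in $t$, or equivalently inserting $\ci\sol$ and testing with $\testfun$ and $\ci\testfun$) matches the paper's ``direct calculation'' and is fine.
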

Since Assumption \ref{ass:cinfsup} implies a positive spectrum of $E''(\sol)$ on $\Honeperp$, the inf-sup stability follows from the G{\aa}rding inequality. For completeness we elaborate the short proof below, also with the purpose to emphasize how to check the local uniqueness numerically.
\begin{proof}
	Let $v \in \Honeperp \setminus \{ 0\}$ be arbitrary and $\gamma : [-\pi,\pi)\rightarrow H^1$ a smooth curve with $\gamma(0)=u$ and $\gamma^{\prime}(0)=v$. Then Assumption \ref{ass:cinfsup} implies
	\begin{equation}
		\label{positivityEprimeprime}
		0 
		<
		 \frac{\mbox{\rm d}^2}{\mbox{\rm d}t^2} E(\, \gamma(t) \, ) \vert_{t=0} 
		= 
		\langle E^{\prime}( \gamma(0) ) ,  \gamma^{\prime\prime}(0)  
		\rangle
		+
		\langle E^{\prime\prime}( \gamma(0) ) \, \gamma^{\prime}(0), \gamma^{\prime}(0) \rangle
		= 
		\langle E^{\prime\prime}(\sol ) v, v \rangle.
	\end{equation}
	Hence, the eigenvalue problem seeking $(w_{\ell}, \lambda_{\ell}) \in H^1 \times \mathbb{R}$ with
	\begin{eqnarray}
		\label{eigenvalues-secvarE}
		\langle E^{\prime\prime}(u) w_{\ell} , v \rangle = \lambda_{\ell} \, m( w_{\ell} , v ) \quad \mbox{for all } v \in H^1
	\end{eqnarray}
	has a non-negative spectrum. A direct calculation shows that the smallest eigenvalue is $\lambda_1= 0$ with eigenfunction $w_1 = \ci \sol$. Furthermore, since $\langle E^{\prime\prime}(\sol ) \cdot ,\cdot \rangle$ is a symmetric bilinear form on $H^1$, the second smallest eigenvalue $\lambda_2$ is positive due to the Courant--Fischer theorem which yields
	$$
	\lambda_2 = \underset{v \not=0}{\inf_{v\in \Honeperp}}  \frac{\langle E^{\prime\prime}(u) v , v \rangle}{m(v,v)} \overset{\eqref{positivityEprimeprime}}{>} 0.
	$$
	Observe that the strict positivity of the second eigenvalue of $E^{\prime\prime}(u)$ is in fact equivalent to the local uniqueness of $\sol$ in the sense of Definition \ref{def-local-uniqueness}. Hence, by computing $\lambda_2$ we can practically verify if a computed minimizer $u$ is locally unique.
	
	Thanks to $\lambda_2>0$, we obtain that $E^{\prime\prime}(u)$ is injective on the $m(\cdot,\cdot)$-orthogonal complement of the first eigenspace, i.e., on $\Honeperp$. Furthermore, it is easily seen that the following G{\aa}rding inequality holds for all $v\in H^1$ (and in particular $v\in \Honeperp$):
	\begin{eqnarray}
		\label{gardinginequality}
		\langle E^{\prime\prime}(u) v , v \rangle \ge \frac{1}{2} \| v \|^2_{H^1} - (1+\kappa^2) (1 + A_{\infty}^2 ) \| v \|^2_{L^2}.
	\end{eqnarray}
	It is well known (cf. \cite{GaticaHsiao94,Yosida1980}) that the injectivity on $\Honeperp$ and the G{\aa}rding inequality (in combination with the Lax-Milgram theorem) imply the Fredholm alternative for $E^{\prime\prime}(u) : \Honeperp \rightarrow (\Honeperp)'$, i.e., $E^{\prime\prime}(u)$ has a bounded inverse on the orthogonal complement of $\ci \sol$. With this insight, let $v \in \Honeperp$ be arbitrary and let $z \in \Honeperp$ be the unique solution to 
	\begin{eqnarray*}
		\langle E^{\prime\prime}(u) z , w \rangle = \mu \, m(v, w)\qquad \mbox{for all } w\in \Honeperp
	\end{eqnarray*}
	for some constant $\mu>(1+\kappa^2) (1 + A_{\infty}^2 )$. Since  $E^{\prime\prime}(u)$ has a bounded inverse it also holds
	\begin{eqnarray}
		\label{bounded-inverse-estimate-z}
		\| z \|_{H^1} \le C \, \mu \, \| v \|_{H^1} 
	\end{eqnarray}
	for some constant $C=C(u,\kappa)$. In conclusion we obtain
	\begin{eqnarray*}
		\langle E^{\prime\prime}(u) (v+z) , v \rangle
		= \langle E^{\prime\prime}(u) v, v \rangle + \mu \, m(v, v) 
		\overset{\eqref{gardinginequality}}{\ge} \frac{1}{2} \| v\|_{H^1}^2 
		\overset{\eqref{bounded-inverse-estimate-z}}{\ge} C \, \| v\|_{H^1} \| v+z \|_{H^1}
	\end{eqnarray*}
	with $C=C(u,\kappa,\mu)>0$. Hence, 
	\begin{eqnarray*}
		\sup_{\phi \in \Honeperp}  \frac{\langle E^{\prime\prime}(u) \phi , v \rangle }{\| \phi \|_{H^1} \| v \|_{H^1}}
		\ge \frac{\langle E^{\prime\prime}(u) (v+z) , v \rangle}{\| v+z\|_{H^1} \| v \|_{H^1}}
		\ge C.
	\end{eqnarray*} 
	Since $v \in \Honeperp$ was arbitrary, the desired inf-sup stability follows from the symmetry of $\langle E^{\prime\prime}(u) \, \cdot , \cdot\rangle$ and the equivalence of the $H^1$-norm and the $H^1_{\kappa}$-norm (up to $\kappa$-constants).
\end{proof}

\begin{remark}[Verifying the local uniqueness numerically]
	\label{rem:loc_uniq}
	The proof of Proposition \ref{proposition-inf-sup-stability} shows that Assumption \ref{ass:cinfsup} is fulfilled for a minimizer $u$ if and only if the second smallest eigenvalue, $\lambda_2$, of 
	\begin{eqnarray*}
		\langle E^{\prime\prime}(u) w_{\ell} , v \rangle = \lambda_{\ell} \, m( w_{\ell} , v ) \quad \mbox{for all } v \in H^1
	\end{eqnarray*}
	is positive. Given a computed discrete minimizer $u_h$ that approximates $u$, we can check this condition numerically. Since $\| u_h - u \|_{H^1} \le \varepsilon(h) \rightarrow 0$ independent of local uniqueness (cf. Proposition \ref{prop:abstract_convergence} below), the second smallest eigenvalue of $E^{\prime\prime}(u_h)$ converges to the second smallest eigenvalue of $E^{\prime\prime}(u)$. Hence, if $\lambda_2$ is clearly bounded away from zero, $u$ must be locally unique in the sense of Definition \ref{def-local-uniqueness}. The practical assembly of the bilinear form $\langle E^{\prime\prime}(u_h) \, \cdot ,\cdot \rangle$ is discussed in Appendix~\ref{appendix}. In our numerical experiments we present the corresponding values for $\lambda_2$ and we observed that the local uniqueness up to gauge transformations was always fulfilled.
\end{remark}

In the next step, we will derive stability and regularity estimates for solutions to variational problems on $\Honeperp$. The variational problems will later play a crucial role in our error analysis and involve the stabilized bilinear form $\abilmagstabsym{\cdot}{\cdot}$, as well as the inf-sup stable bilinear form $\langle E^{\prime}(u) \, \cdot , \cdot \rangle$.

\begin{lemma} \label{lem:wepo_abilmagstab}
	For any $f \in L^2(\Omega)$, there is $\testfunFOUR\in \Honeperp \subset H^1(\Omega)$ such that
	\begin{equation}
		\abilmagstabsym{\testfunFOUR}{\testfun} =  \ipsymLtwo{f}{\testfun}, \quad  \text{ for all } \testfun \in \Honeperp ,
	\end{equation}
	and there hold the bounds
	\begin{equation}
		\Honekappa{\testfunFOUR} 
		\lesssim \Honekappaminus{f}
		\lesssim \frac{1}{\kappa} \norm{L^2}{f}
		\, \, 
		\mbox{and, if $\Omega$ is convex, then $\testfunFOUR \in H^2$ and} 
		\
		\Htwokappa{\testfunFOUR} \lesssim  \norm{L^2}{f},
	\end{equation}
	where the (hidden) constants in the bounds are independent of $\kappa$. 
\end{lemma}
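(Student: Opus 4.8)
The plan is to obtain the three assertions in sequence. For existence, uniqueness and the first bound I would apply the Lax--Milgram theorem on the closed subspace $\Honeperp$: equipped with $\Honekappa{\cdot}$ it is a Hilbert space, the boundedness and coercivity of $\abilmagstabsym{\cdot}{\cdot}$ from Lemma~\ref{lem:prop_bil} are inherited from $H^1$, and $\testfun\mapsto\ipsymLtwo{f}{\testfun}$ is a bounded functional on $\Honeperp$. This produces a unique $\testfunFOUR\in\Honeperp$, and testing with $\testfun=\testfunFOUR$ gives $\Ccoer\Honekappa{\testfunFOUR}^2\le\ipsymLtwo{f}{\testfunFOUR}\le\Honekappaminus{f}\Honekappa{\testfunFOUR}$, hence $\Honekappa{\testfunFOUR}\lesssim\Honekappaminus{f}$. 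The second inequality $\Honekappaminus{f}\lesssim\kappa^{-1}\norm{L^2}{f}$ is immediate: for any $\testfun\in H^1$ one has $\ipsymLtwo{f}{\testfun}\le\norm{L^2}{f}\norm{L^2}{\testfun}\le\kappa^{-1}\norm{L^2}{f}\Honekappa{\testfun}$ because $\kappa\norm{L^2}{\testfun}\le\Honekappa{\testfun}$; taking the supremum finishes this part.

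The heart of the matter is the $H^2$-bound, and the obstacle is that the identity holds only against $\testfun\in\Honeperp$, so it is not yet a PDE on $H^1$. I would lift it by a Lagrange-multiplier argument: the bounded real-linear functional $\testfun\mapsto\abilmagstabsym{\testfunFOUR}{\testfun}-\ipsymLtwo{f}{\testfun}$ vanishes on the closed hyperplane $\Honeperp$, which is the kernel of the functional $\ipsymLtwo{\ci\sol}{\cdot}$, and a continuous functional vanishing on such a kernel must be a multiple $\kaptrafo\,\ipsymLtwo{\ci\sol}{\cdot}$ with $\kaptrafo\in\R$. Therefore $\testfunFOUR$ solves $\abilmagstabsym{\testfunFOUR}{\testfun}=\ipsymLtwo{f+\kaptrafo\,\ci\sol}{\testfun}$ for all $\testfun\in H^1$. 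To control the modified data I would test this with $\testfun=\ci\sol$; since $\ipsymLtwo{\testfunFOUR}{\ci\sol}=0$ the stabilization term cancels, and combining boundedness of $\abilmag{\cdot}{\cdot}$ with the already proved $\Honekappa{\testfunFOUR}\lesssim\kappa^{-1}\norm{L^2}{f}$ and the a priori bounds $\Honekappa{\sol}\lesssim\kappa$, $\norm{L^2}{\sol}\lesssim1$ of Theorem~\ref{thm:cont_minimizer} yields $|\kaptrafo|\,\norm{L^2}{\sol}^2\lesssim\norm{L^2}{f}$, and thus $\norm{L^2}{f+\kaptrafo\,\ci\sol}\lesssim\norm{L^2}{f}$ once a lower bound $\norm{L^2}{\sol}\gtrsim1$ on the minimizer is available. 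This last point -- controlling the multiplier uniformly in $\kappa$ -- is the step I expect to be delicate, since it is exactly where the $\kappa$-explicit information on $\sol$ is indispensable.

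With a genuine $H^1$-identity at hand, I would proceed as in the proof of Theorem~\ref{thm:cont_minimizer}: expanding $\abilmagstabsym{\cdot}{\cdot}$ and integrating the term $\ci\kappa\MagF\testfunFOUR\cdot\nabla\testfun^*$ by parts, the conditions $\div\MagF=0$ and $\MagF\cdot\nu=0$ from \eqref{eq:ass_\MagF_for_H2} eliminate the boundary term and turn this $\nabla\testfun^*$-contribution into an $L^2$-term in $\nabla\testfunFOUR$, so that the problem becomes $\Real\int_\Omega\nabla\testfunFOUR\cdot\nabla\testfun^*+\testfunFOUR\testfun^*\,dx=\ipsymLtwo{g}{\testfun}$ for all $\testfun\in H^1$, with $g=f+\kaptrafo\,\ci\sol-(\kappa^2|\MagF|^2+\stabPar^2-1)\testfunFOUR+2\ci\kappa\MagF\cdot\nabla\testfunFOUR\in L^2$. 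On convex $\Omega$, elliptic regularity for the homogeneous Neumann Laplacian (cf.\ \cite[Theorem~3.2.1.3]{Grisvard}) then gives $\testfunFOUR\in H^2$ with $\norm{H^2}{\testfunFOUR}\lesssim\norm{L^2}{g}$. Finally I would bound $\norm{L^2}{g}\lesssim\norm{L^2}{f}$: using $\stabPar^2=\kappa^2(\MagFinfty^2+1)$ the zeroth-order coefficient is of size $\kappa^2$, so both $\norm{L^2}{(\kappa^2|\MagF|^2+\stabPar^2-1)\testfunFOUR}\lesssim\kappa^2\norm{L^2}{\testfunFOUR}\le\kappa\Honekappa{\testfunFOUR}$ and $\norm{L^2}{2\ci\kappa\MagF\cdot\nabla\testfunFOUR}\lesssim\kappa\Honekappa{\testfunFOUR}$ are controlled by $\kappa\Honekappa{\testfunFOUR}\lesssim\norm{L^2}{f}$, while the data term was estimated above. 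Together with $\kappa^2\Honekappa{\testfunFOUR}^2\lesssim\norm{L^2}{f}^2$ this gives $\Htwokappa{\testfunFOUR}^2=\norm{H^2}{\testfunFOUR}^2+\kappa^2\Honekappa{\testfunFOUR}^2\lesssim\norm{L^2}{f}^2$, which is the claimed estimate.
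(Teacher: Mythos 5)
Your skeleton matches the paper's proof: coercivity of $\abilmagstabsym{\cdot}{\cdot}$ on the closed subspace $\Honeperp$ (Lax--Milgram) for existence and the $\HonekappaSpace$-bound, the elementary estimate $\Honekappaminus{f}\lesssim\kappa^{-1}\norm{L^2}{f}$, then extension of the variational identity from $\Honeperp$ to all of $H^1$ with a modified right-hand side $f+\kaptrafo\,\ci\sol$, and finally Grisvard's Neumann regularity on the convex domain. Your Lagrange-multiplier formulation is just the dual phrasing of the paper's explicit decomposition $\testfun=\widehat{\testfun}+\alpha\,\ci\sol$, and your last step (integration by parts using $\div\MagF=0$, $\MagF\cdot\nu=0$, and the bound $\norm{L^2}{g}\lesssim\norm{L^2}{f}$) is identical to the paper's.

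The genuine gap is the multiplier estimate, precisely the point you flag as delicate. Bounding $\abilmag{\testfunFOUR}{\ci\sol}$ by generic continuity, $|\abilmag{\testfunFOUR}{\ci\sol}|\lesssim\Honekappa{\testfunFOUR}\Honekappa{\sol}\lesssim\norm{L^2}{f}$, only yields $|\kaptrafo|\,\norm{L^2}{\sol}^2\lesssim\norm{L^2}{f}$, hence $\norm{L^2}{\kaptrafo\,\ci\sol}\lesssim\norm{L^2}{f}/\norm{L^2}{\sol}$, and you are forced to postulate $\norm{L^2}{\sol}\gtrsim 1$ uniformly in $\kappa$. No such lower bound is proved or used anywhere in the paper, and the lemma is stated for arbitrary minimizers; a proof resting on it is incomplete. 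The paper closes exactly this hole by exploiting gauge invariance: $\ci\sol=\exp(\ci\tfrac{\pi}{2})\sol$ is itself a minimizer, so $\energy'(\ci\sol)=0$, and therefore
\begin{equation}
	\abilmag{\testfunFOUR}{\ci\sol}
	= \dualp{\energy'(\ci\sol)}{\testfunFOUR}
	- \kappa^2\,\Real\int_\Omega \bigl(|\sol|^2-1\bigr)\,\ci\sol\,\testfunFOUR^*\,dx
	= - \kappa^2\,\Real\int_\Omega \bigl(|\sol|^2-1\bigr)\,\ci\sol\,\testfunFOUR^*\,dx .
\end{equation}
Using $|\sol|\le 1$ and $\kappa^2\norm{L^2}{\testfunFOUR}\le\kappa\Honekappa{\testfunFOUR}\lesssim\norm{L^2}{f}$, this gives $|\abilmag{\testfunFOUR}{\ci\sol}|\le\kappa^2\norm{L^2}{\sol}\,\norm{L^2}{\testfunFOUR}\lesssim\norm{L^2}{\sol}\,\norm{L^2}{f}$, i.e.\ a bound carrying an explicit factor $\norm{L^2}{\sol}$, just like $|\ipsymLtwo{f}{\ci\sol}|\le\norm{L^2}{f}\,\norm{L^2}{\sol}$. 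Inserting these into your identity $\kaptrafo\,\norm{L^2}{\sol}^2=\abilmag{\testfunFOUR}{\ci\sol}-\ipsymLtwo{f}{\ci\sol}$ yields directly $|\kaptrafo|\,\norm{L^2}{\sol}\lesssim\norm{L^2}{f}$, so $\norm{L^2}{f+\kaptrafo\,\ci\sol}\lesssim\norm{L^2}{f}$ with no lower bound on $\norm{L^2}{\sol}$ whatsoever. With this single repair --- replacing generic boundedness by the equation satisfied by $\ci\sol$ --- your argument becomes a complete proof and coincides with the paper's.
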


\begin{proof}
	Since $\abilmagstabsym{\cdot}{\cdot}$ is still coercive on $\Honeperp$, we immediately obtain the unique solution, and also the bounds in $\HonekappaSpace$. 
	Furthermore, we have for any $f \in L^2$ that
	\begin{equation} \label{eq:relation_Honeminus_L2}
		\Honekappaminus{f} 
		=
		\sup_{  \Honekappa{\testfun} = 1} \ipsymLtwo{f}{\testfun}
		\leq
		\sup_{  \Honekappa{\testfun} = 1}  \frac{1}{\kappa} \norm{L^2}{f} \kappa \norm{L^2}{\testfun}
		\leq  \frac{1}{\kappa} \norm{L^2}{f} ,
	\end{equation}
	which yields the second inequality. For the bound in the $\HtwokappaSpace$-norm for convex domains, let $\testfun \in H^1$ and decompose as $\testfun = \widehat{\testfun} + \alpha  (\ci \sol) $
	with $\widehat{\testfun} \in \Honeperp$ and
	$\alpha = \ipsymLtwo{\testfun}{\ci \sol} \norm{L^2}{\sol}^{-2} $. Then,
	\begin{align}
		\abilmagstabsym{\testfunFOUR}{\testfun} &= 	\abilmagstabsym{\testfunFOUR}{\widehat{\testfun} } +  \alpha \, \abilmagstabsym{\testfunFOUR}{ \ci \sol }
		=  \ipsymLtwo{f}{\widehat{\testfun} }  +  \alpha \, \abilmagstabsym{\testfunFOUR}{ \ci \sol }
		\\
		&=  \ipsymLtwo{f}{\testfun}  -
		\alpha \, \ipsymLtwo{f}{\ci \sol }+  \alpha \, \abilmag{\testfunFOUR}{ \ci \sol },
	\end{align}
	where we used \eqref{eq:def_abilmagstabsym} in the last step. We first note
	\begin{equation}
		| \ipsymLtwo{f}{\testfun}  -
		\alpha \, \ipsymLtwo{f}{\ci \sol } | \leq 2 \norm{L^2}{f} \norm{L^2}{\testfun} \,,
	\end{equation}
	and then employ $\energy'(\ci \sol) = 0$ to obtain
	\begin{align}
		| \abilmag{\testfunFOUR}{ \ci \sol } | &=
		| \dualp{\energy'(\ci \sol)}{\testfunFOUR} -
		\kappa^2 	\Real  \int_\Omega \bigl( |\sol|^2 -1 \bigr) \ci \sol \testfunFOUR^* 
		\,dx |
		\leq \kappa^2 \norm{L^2}{\sol} \norm{L^2}{\testfunFOUR} \lesssim \norm{L^2}{f},
	\end{align}
	where we exploited $ \kappa^2 \norm{L^2}{\testfunFOUR} \lesssim \norm{L^2}{f}$ in the last line. Altogether we have shown that there exists some $f_\testfunFOUR\in L^2$ such that it holds for all $\testfun \in H^1$
	\begin{align} \label{eq:var_form_extended_test_fun}
		\abilmagstabsym{\testfunFOUR}{\testfun} &=  \ipsymLtwo{f_\testfunFOUR}{\testfun}  ,\quad \norm{L^2}{f_\testfunFOUR} \lesssim \norm{L^2}{f}.
	\end{align}
	We conclude as in Theorem~\ref{thm:cont_minimizer}: We write
	\begin{equation} \label{eq:expansion_ahat}
		\abilmagstabsym{\testfunFOUR}{\testfun} 
		=  	  
		\Real \int_{\Omega}
		\nabla \testfunFOUR\cdot \nabla \testfun^*  +  \stabPar^2 \testfunFOUR \, \testfun^*
		\,dx
		+
		\Real \int_{\Omega}
		\bigl(
		- 2 \ci \kappa \MagF \cdot \nabla \testfunFOUR
		+ \kappa^2 |\MagF|^2 \testfunFOUR \bigr) \testfun^*
		\,dx ,
	\end{equation}
	and since the second term is in $L^2$, we have, again by regularity theory for the homogenous Neumann problem, $\testfunFOUR\in H^2$ and
	\begin{equation}
		\norm{H^2}{\testfunFOUR} \lesssim  \norm{L^2}{f_\testfunFOUR} + \kappa^2 \norm{L^2}{\testfunFOUR} + \kappa \norm{L^2}{\nabla \testfunFOUR} 
		\lesssim \norm{L^2}{f},
	\end{equation}
	where we used the $L^2$- and $H^1$-bounds for $\testfunFOUR$ in the last step.
\end{proof}

Using the inf-sup stability established in Proposition \ref{proposition-inf-sup-stability}, we can formulate an analogous result for variational problems based on $\energy''(\sol)$ in $\Honeperp$. Note here that the inclusion $\Honeperp \subset H^1$ implies $\dualHone \subset  (\Honeperp)'$ for the dual spaces.

\begin{corollary} \label{cor:du_F_inf_sup_solvability}
	
	Let Assumption~\ref{ass:cinfsup} hold.	
	
	\begin{itemize}
	\item[{\normalfont (a)}] For any $f \in  (\Honeperp)' $, 
	there is a unique $\testfunFOUR \in \Honeperp$ such that
	\begin{equation} \label{eq:du_F_var_problem}
		\dualp{\energy''(\sol) \testfunFOUR}{\testfun}  = \dualp{f}{\testfun}, \quad \text{for all} \quad \testfun \in \Honeperp ,
	\end{equation} 
	which satisfies the estimate
	\begin{equation}
		\Honekappa{\testfunFOUR} \leq \Csol \Honekappaminus{f} .
	\end{equation}
	\item[{\normalfont (b)}] Let $\testfunFOUR \in \Honeperp$ be the solution of \eqref{eq:du_F_var_problem} with $f \in L^2$.
	Then, it further holds
	\begin{align}
		\Honekappa{\testfunFOUR} 
		&\leq \frac{\Csol}{\kappa} \norm{L^2}{f}
		\quad
		\intertext{and, if $\Omega$ is convex, then $\testfunFOUR \in H^2$ and}
		\Htwokappa{\testfunFOUR} &\lesssim \Csol  \norm{L^2}{f} .
	\end{align}
	\end{itemize}
\end{corollary}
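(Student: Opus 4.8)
The plan is to reduce both parts to results already at hand rather than to argue from scratch. For part (a), I would check that the bilinear form $\dualp{\energy''(\sol)\cdot}{\cdot}$ meets the hypotheses of the Banach--Ne\v{c}as--Babu\v{s}ka (generalized Lax--Milgram) theorem on the closed subspace $\Honeperp$: boundedness with respect to $\Honekappa{\cdot}$ is precisely Lemma~\ref{lem:Frechet_functional}(b), and the inf-sup condition with constant $\Csolinv$ is Proposition~\ref{proposition-inf-sup-stability}. Because $\dualp{\energy''(\sol)\cdot}{\cdot}$ is symmetric (again Lemma~\ref{lem:Frechet_functional}(b)), the inf-sup condition in the first argument coincides with the one in the second, so the surjectivity/compatibility condition of the theorem holds automatically. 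This makes $\energy''(\sol)\colon\Honeperp\to(\Honeperp)'$ an isomorphism, which yields existence and uniqueness of $\testfunFOUR$ together with the stability bound $\Honekappa{\testfunFOUR}\le\Csol\Honekappaminus{f}$ (using, for $f\in L^2$, that the dual norm over $\Honeperp$ is controlled by the one over $H^1$). For the first estimate of part (b) I would simply insert the inequality $\Honekappaminus{f}\le\kappa^{-1}\norm{L^2}{f}$ already recorded in \eqref{eq:relation_Honeminus_L2} into this bound.

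The substance of the statement is the $H^2$-regularity, and here I would avoid applying elliptic regularity to $\energy''(\sol)$ directly and instead rewrite the equation in terms of the stabilized form $\abilmagstabsym{\cdot}{\cdot}$, for which Lemma~\ref{lem:wepo_abilmagstab} already supplies the regularity. Comparing the representation in Lemma~\ref{lem:Frechet_functional}(a) with the definition \eqref{eq:def_abilmagstabsym}, the principal magnetic part $\abilmag{\cdot}{\cdot}$ is common to both forms, so for $\testfun\in\Honeperp$ the identity $\dualp{\energy''(\sol)\testfunFOUR}{\testfun}=\ipsymLtwo{f}{\testfun}$ rearranges into
\begin{equation*}
\abilmagstabsym{\testfunFOUR}{\testfun} = \ipsymLtwo{f}{\testfun} + \stabPar^2\ipsymLtwo{\testfunFOUR}{\testfun} - \kappa^2\Real\int_\Omega\bigl((|\sol|^2-1)\testfunFOUR + \sol^2\testfunFOUR^* + |\sol|^2\testfunFOUR\bigr)\testfun^*\,dx =: \ipsymLtwo{\widetilde f}{\testfun}.
\end{equation*}
Using $|\sol|\le1$ from Theorem~\ref{thm:cont_minimizer}, each zeroth-order contribution is an $L^2$-function of norm at most $\kappa^2\norm{L^2}{\testfunFOUR}$, and $\stabPar^2=\kappa^2(\MagFinfty^2+1)\lesssim\kappa^2$, so $\norm{L^2}{\widetilde f}\lesssim\norm{L^2}{f}+\kappa^2\norm{L^2}{\testfunFOUR}$. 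The already-proven first estimate of part (b) gives $\kappa^2\norm{L^2}{\testfunFOUR}\le\kappa\,\Honekappa{\testfunFOUR}\le\Csol\norm{L^2}{f}$, whence $\norm{L^2}{\widetilde f}\lesssim\Csol\norm{L^2}{f}$. Since $\testfunFOUR\in\Honeperp$ then solves $\abilmagstabsym{\testfunFOUR}{\testfun}=\ipsymLtwo{\widetilde f}{\testfun}$ for all $\testfun\in\Honeperp$ with $\widetilde f\in L^2$, coercivity of $\abilmagstabsym{\cdot}{\cdot}$ on $\Honeperp$ identifies $\testfunFOUR$ with the solution from Lemma~\ref{lem:wepo_abilmagstab}, and that lemma delivers $\testfunFOUR\in H^2$ on convex $\Omega$ with $\Htwokappa{\testfunFOUR}\lesssim\norm{L^2}{\widetilde f}\lesssim\Csol\norm{L^2}{f}$.

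The main obstacle is not any isolated step but the bookkeeping in this $H^2$ estimate. One has to be careful that the \emph{real} scalar-product structure genuinely lets every nonlinear term, in particular the conjugated one $\sol^2\testfunFOUR^*\testfun^*$, be absorbed into $\ipsymLtwo{g}{\testfun}$ with a single $g\in L^2$ (taking $g=\sol^2\testfunFOUR^*$ there), and that the powers of $\kappa$ are tracked so that the critical factor $\kappa^2\norm{L^2}{\testfunFOUR}$ is absorbed cleanly by the $\HonekappaSpace$-bound instead of degrading the final constant. Once these two points are handled, the chain of reductions closes and both estimates follow with the stated $\kappa$-explicit dependence.
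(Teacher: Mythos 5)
Your proposal is correct and follows essentially the same route as the paper's own proof: part (a) via the Babu\v{s}ka inf-sup theory built on Proposition~\ref{proposition-inf-sup-stability} and Lemma~\ref{lem:Frechet_functional}, the first bound in (b) via \eqref{eq:relation_Honeminus_L2}, and the $H^2$ estimate by rewriting the equation in terms of $\abilmagstabsym{\cdot}{\cdot}$ with a modified right-hand side $\widetilde f\in L^2$ satisfying $\norm{L^2}{\widetilde f}\lesssim \Csol\norm{L^2}{f}$, then invoking Lemma~\ref{lem:wepo_abilmagstab}. The only difference is that you spell out the bookkeeping (handling the conjugated term $\sol^2\testfunFOUR^*$ within the real scalar product and absorbing $\kappa^2\norm{L^2}{\testfunFOUR}$ via the first estimate of (b)) that the paper leaves implicit.
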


\begin{proof}
	By standard theory for indefinite differential equations (cf. \cite{Bab7071}), the inf-sup stability in Proposition~\ref{proposition-inf-sup-stability} directly gives the \wepo of  \eqref{eq:du_F_var_problem} together with the stability estimate $\Honekappa{\testfunFOUR} \leq \Csol \Honekappaminus{f}$, hence proving (a). The first estimate in (b) is obtained from \eqref{eq:relation_Honeminus_L2}.
	Using this observation, we conclude that $\testfunFOUR\in \Honeperp$ solves
	\begin{equation}
		\abilmagstabsym{\testfunFOUR}{\testfun} =  \ipsymLtwo{\widetilde{f}}{\testfun}, \quad  \text{ for all } \testfun \in \Honeperp \,,
	\end{equation}
	for some $\widetilde{f}\in L^2$ with $\norm{L^2}{\widetilde{f}} \lesssim \Csol \norm{L^2}{f}$, 
	and thus Lemma~\ref{lem:wepo_abilmagstab} gives the claim.
\end{proof}

\section{Space discretization and main results}
\label{sec:main}

Let us consider some finite dimensional finite element space $\VSh$ which is a subspace of $H^1(\Omega)$ and
where we recall that we assume $\Omega$ to be a polygonal (or polyhedral) Lipschitz domain. 
By $\h$ we denote a spatial parameter which tends to zero for a finer spatial resolution.

In order to derive a priori estimates for the error between a discrete minimizer $\solh$ and a continuous minimizer $\sol$, we introduce the closed subspace $\VShperp$ of $\VSh$ by
$\VShperp = \VSh \cap (\ci \sol)^\perp \subset \Honeperp$, where the orthogonality is with respect to the inner product $\ipsymLtwo{\cdot}{\cdot}$. Before we proceed, let us stress that $\VShperp$ is just an auxiliary space, that is not required for practical computations but is only used as a tool in the analysis. For the moment, $\sol \in H^1$ can denote any minimizer of the energy $\energy$, however, later we will link every discrete minimizer $\solh \in \VSh$ to a specific minimizer $\sol$ in order to ensure that the error becomes small.

As another theoretical tool, we require the orthogonal projection $\projLtwoisol \colon \Honeperp \rightarrow \VShperp$ to satisfy
\begin{equation}
	\abilmagstabsym{\projLtwoisol w}{\testfunh} = 	\abilmagstabsym{w}{\testfunh}, \quad \text{ for all } \testfunh \in \VShperp.
\end{equation}
In the following assumption, we introduce some abstract conditions which are sufficient to carry out our error analysis and which are later verified for our examples.

\begin{assumption}  \label{ass:FEM_space}
	The family of (non-empty) finite element spaces $\VSh$ has the following properties:
	\begin{itemize}
\item[{\normalfont (a)}] The  family of spaces $\VSh$ is dense in $H^1(\Omega)$ in the sense that for each $\testfun \in H^1$ we have
	\begin{equation}
		\lim\limits_{\h \to 0} \inf\limits_{\testfunh \in \VSh } \norm{H^1}{\testfun - \testfunh} = 0 .
	\end{equation}
\item[{\normalfont (b)}] Let $w \in \Honeperp$ and $f \in L^2$ such that $\abilmagstabsym{w}{\testfun} = \ipsymLtwo{f}{\testfun}$ for all $\testfun \in \Honeperp$. Then, it holds
		\begin{equation} \label{eq:ass_bound_H1kappa}
			\Honekappa{w - \projLtwoisol w} \lesssim  h \norm{L^2}{f} ,
		\end{equation}
	where the constant is independent of $h$ and $\kappa$.
\end{itemize}
\end{assumption}

The most prominent example that fulfills Assumption \ref{ass:FEM_space} are linear Lagrange finite element spaces which are discussed at the end of this section. Property (a) is obvious and to verify property (b), one first replaces $\norm{L^2}{f}$ by $\Htwokappa{w}$, and uses (for a convex domain $\Omega$) $H^2$-regularity. We give the details below.
Another, non-trivial example of generalized finite elements spaces is presented in Section~\ref{sec:lod}.

Recall that we want to minimize the functional $\energy$ from \eqref{eq:energy_functional_times_kappa2} over $\VSh$, i.e.,
\begin{equation} \label{eq:energy_functional_discrete}
	\energy(\solh) = \inf\limits_{\testfunh \in \VSh }
	\energy(\testfunh),\qquad \energy(\testfunh)  = \frac12 \int_\Omega |\nabla \testfunh + \ci \kappa \MagF \testfunh |^2 + \frac{\kappa^2}{2}  \bigl( 1- |\testfunh|^2 \bigr)^2\,dx .
\end{equation}
Note that since $\VSh$ is finite dimensional, the existence of a minimizer $\solh$
is always guaranteed.
Our first result shows bounds on the discrete minimizer $\solh$ in different norms and the corresponding energy, which are independent of $\h$ and behave in the parameter $\kappa$ the same way as the exact minimizer $\sol$ studied in Theorem~\ref{thm:cont_minimizer}. The proof is postponed to Section~\ref{sec:proofs}.

\begin{lemma} \label{lem:bound_general_uH}
	For all $\h>0$ let $\solh$ be a minimizer of \eqref{eq:energy_functional_discrete}.
	Then
	there hold the bounds
	\begin{equation}
		\energy(\solh) \lesssim   \kappa^2
		\qquad 
		\norm{L^2}{\solh} \lesssim 1,
		\qquad
		\norm{L^2}{\nabla \solh}  \lesssim   \kappa,
		\qquad
		\Honekappa{\solh} \lesssim  \kappa ,
	\end{equation}
	where the hidden constants are independent of $\h$ and $\kappa$.
\end{lemma}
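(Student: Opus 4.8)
The plan is to mirror the proof of Theorem~\ref{thm:cont_minimizer}, with one essential modification. The pointwise bound $|\sol|\le 1$ used there is \emph{not} available for discrete minimizers, since truncating a finite element function (the device that produces the $L^\infty$-bound in the continuous case) generally leaves the space $\VSh$. I would therefore extract the $L^2$-bound directly from the energy by a quartic estimate instead. This replacement is the only genuinely new ingredient, and it is also where the main obstacle lies.

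First I would establish the energy bound by a comparison argument. Since $0\in\VSh$ and $\solh$ minimizes $\energy$ over $\VSh$, we have $\energy(\solh)\le\energy(0)=\tfrac{\kappa^2}{2}|\Omega|\lesssim\kappa^2$ directly from \eqref{eq:energy_functional_discrete}. Because both contributions to the energy are nonnegative, this single comparison already yields the two separate bounds
\begin{equation*}
	\tfrac12\,\norm{L^2}{\nabla\solh+\ci\kappa\MagF\solh}^2\lesssim\kappa^2
	\qquad\text{and}\qquad
	\tfrac{\kappa^2}{2}\int_\Omega\bigl(1-|\solh|^2\bigr)^2\,dx\lesssim\kappa^2 .
\end{equation*}

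The $L^2$-bound is the crux. From the second inequality above we get $\int_\Omega(1-|\solh|^2)^2\,dx\lesssim1$, and expanding the square gives $\norm{L^4}{\solh}^4-2\norm{L^2}{\solh}^2\lesssim1$. Writing $a=\norm{L^2}{\solh}^2$ and $b=\norm{L^4}{\solh}^2$, Hölder's inequality on the bounded domain gives $a\le|\Omega|^{1/2}b$, so that $b^2\lesssim1+2a\lesssim1+b$; solving this quadratic inequality bounds $b$ by a constant independent of $\kappa$ and $\h$, and therefore $\norm{L^2}{\solh}^2=a\le|\Omega|^{1/2}b\lesssim1$. The point is that the double-well term of the energy controls $\norm{L^4}{\solh}$, which on a bounded domain suffices to control $\norm{L^2}{\solh}$ without ever invoking an $L^\infty$-bound.

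The remaining estimates are then routine. From the first energy inequality and the triangle inequality,
\begin{equation*}
	\norm{L^2}{\nabla\solh}\le\norm{L^2}{\nabla\solh+\ci\kappa\MagF\solh}+\kappa\MagFinfty\norm{L^2}{\solh}\lesssim\kappa,
\end{equation*}
using the $L^2$-bound just obtained, and finally $\Honekappa{\solh}^2=\norm{L^2}{\nabla\solh}^2+\kappa^2\norm{L^2}{\solh}^2\lesssim\kappa^2$ delivers the last claim. All constants depend only on $\Omega$ and $\MagF$, hence are independent of $\h$ and $\kappa$.
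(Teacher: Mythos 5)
Your proposal is correct, and it follows the same skeleton as the paper's proof: compare with the competitor $0\in\VSh$ to get $\energy(\solh)\le\energy(0)\lesssim\kappa^2$, use nonnegativity of both energy contributions to bound them separately, and finish with the triangle inequality $\norm{L^2}{\nabla\solh}\le\norm{L^2}{\nabla\solh+\ci\kappa\MagF\solh}+\kappa\MagFinfty\norm{L^2}{\solh}$. The one place where you genuinely diverge is the extraction of $\norm{L^2}{\solh}\lesssim1$ from $\int_\Omega(1-|\solh|^2)^2\,dx\lesssim1$: you expand the quartic and close a quadratic inequality $b^2\lesssim1+b$ for $b=\norm{L^4}{\solh}^2$ via Hölder, whereas the paper uses the pointwise inequality $(1-|\solh|)^2\le(1-|\solh|)^2(1+|\solh|)^2=(1-|\solh|^2)^2$ and then the triangle inequality $\norm{L^2}{\solh}\le\norm{L^2}{1-|\solh|\,}+\vol(\Omega)^{1/2}$. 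Both devices are elementary and avoid any $L^\infty$-bound (your opening observation that the truncation argument behind $|\sol|\le1$ is unavailable in $\VSh$ is exactly the reason the paper also steers around it); the paper's version is a line shorter, while yours delivers a uniform $L^4$-bound on $\solh$ as a free byproduct, which could be of independent use.
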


Our main findings are collected in the following theorem. 
We provide error bounds for the discrete minimizers which are explicit in the parameter $\kappa$ and the mesh width $\h$.
In addition, we show that the error behaves as the quasi-best approximation of $\Honeperp$ in $\VShperp$.

\begin{theorem} \label{thm:main}
	Let Assumption~\ref{ass:cinfsup} and \ref{ass:FEM_space} hold, and let $\h \leq \h_0$ be sufficiently small such that in particular $\kappa \Csol \h$	is small. Then, there is neighborhood $\Nbh \subset H^1(\Omega)$ of
	each discrete minimizer $\solh$ of \eqref{eq:energy_functional_discrete} such that there is a unique minimizer $\sol \in \Nbh$ of \eqref{eq:energy_functional_times_kappa2} with 
	\begin{equation}
		\ipsymLtwo{\solh}{\ci \sol} = 0,
	\end{equation}
	and we have the error bounds
	\begin{align}
		\Honekappa{ \sol - \solh}  &\lesssim 
		(1 + \kappa \Csol \h)
		\inf_{\testfunh \in \VShperp}	\Honekappa{\sol- \testfunh }, 
		\\  
		\norm{L^2}{ \sol - \solh}  &\lesssim 
		\h \, \Csol \,
		(1 + \kappa \Csol \h)
		\inf_{\testfunh \in \VShperp}	\Honekappa{\sol- \testfunh }  ,
	\end{align}
	as well as the following estimate on the error in the energy 
	\begin{align}
		0\leq 	\energy(\solh) - \energy(\sol) 
		&\lesssim \Honekappa{\sol -\solh}^2 \,
		\bigl( 
		1
		+
		\kappa^{1/2} \Honekappa{\sol -\solh}
		+
		\kappa \Honekappa{\sol -\solh}^2 \bigr),
	\end{align}
	where the hidden constants are independent of $\kappa$, $\Csol$, and $\h$.
\end{theorem}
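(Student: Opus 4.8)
The plan is to linearize the problem around a suitably gauged continuous minimizer and to transfer the inf-sup stability of Proposition~\ref{proposition-inf-sup-stability} to the discrete space $\VShperp$, thereby reducing everything to a perturbed linear theory on $\Honeperp$; the nonlinearity is then handled by a short bootstrapping argument that exploits the a priori smallness of $\Honekappa{\errh}$. First I would invoke the abstract convergence of discrete minimizers (Proposition~\ref{prop:abstract_convergence}, which holds independently of local uniqueness) to place $\solh$, for $\h$ small, in a small $H^1$-ball around the gauge orbit $\{\exp(-\ci\phi)\sol\}$ of some minimizer. Writing $\int_\Omega \solh\sol^*\,\mathrm{d}x$ in polar form, the condition $\ipsymLtwo{\solh}{\ci\sol}=0$ fixes the phase $\phi$ up to the ambiguity $\sol$ versus $-\sol$; choosing the in-phase representative selects a unique $\sol\in\Nbh$ (using Assumption~\ref{ass:cinfsup}) and, crucially, forces $\solh\in\VShperp\subset\Honeperp$. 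Consequently $\errh=\sol-\solh\in\Honeperp$, and the discrete criticality $\dualp{\energy'(\solh)}{\testfunh}=0$ holds for all $\testfunh\in\VShperp$.

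The crux is the discrete inf-sup stability. I would write $\energy''(\sol)=\abilmagstabsym{\cdot}{\cdot}+K$, where $K$ collects the remaining zeroth-order terms and is $L^2$-bounded with $|K(\testfunTWO,\testfun)|\lesssim\kappa^2\norm{L^2}{\testfunTWO}\norm{L^2}{\testfun}$ (using $\stabPar^2\lesssim\kappa^2$ and $|\sol|\le1$). For $\testfunh\in\VShperp$, take the continuous supremizer $\testfun\in\Honeperp$ from Proposition~\ref{proposition-inf-sup-stability} and test with $\projLtwoisol\testfun\in\VShperp$. Since $\projLtwoisol$ is the $\abilmagstabsym{\cdot}{\cdot}$-orthogonal projection, $\abilmagstabsym{\testfunh}{\testfun-\projLtwoisol\testfun}=0$, so only the compact part survives, and $|\dualp{\energy''(\sol)\testfunh}{\testfun-\projLtwoisol\testfun}|\lesssim\kappa^2\norm{L^2}{\testfunh}\norm{L^2}{\testfun-\projLtwoisol\testfun}$. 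Estimating $\norm{L^2}{\testfun-\projLtwoisol\testfun}$ through the approximation property of Assumption~\ref{ass:FEM_space}(b) together with the $H^2$-regularity of the auxiliary $\energy''(\sol)$-problem from Corollary~\ref{cor:du_F_inf_sup_solvability}(b) (a Schatz/Aubin--Nitsche duality step) yields a relative perturbation of size $\lesssim\kappa\Csol\h$. Under the stated smallness condition this is absorbed, leaving the discrete inf-sup constant comparable to $\Csolinv$.

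Because $\energy$ is a quartic functional, the Taylor expansion of $\energy'$ at $\sol$ terminates, and together with $\energy'(\sol)=0$ and the discrete criticality I obtain the exact error equation $\dualp{\energy''(\sol)\errh}{\testfunh}=\tfrac12\energy'''(\sol)[\errh,\errh,\testfunh]-\tfrac16\energy''''(\sol)[\errh,\errh,\errh,\testfunh]$ for $\testfunh\in\VShperp$, whose right-hand side carries a factor $\kappa^2$ and is controlled via $H^1\hookrightarrow L^4$ ($d\le3$). Splitting $\errh=(\sol-\testfunTWO_\h)+(\testfunTWO_\h-\solh)$ for arbitrary $\testfunTWO_\h\in\VShperp$, applying the discrete inf-sup to $\testfunTWO_\h-\solh$, and using boundedness of $\energy''$ (Lemma~\ref{lem:Frechet_functional}(b)) gives the $\HonekappaSpace$-quasi-optimality up to the nonlinear remainder. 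The $L^2$-bound follows from an Aubin--Nitsche duality with the dual problem solved by Corollary~\ref{cor:du_F_inf_sup_solvability}(b), and feeding it back into the remainder produces the extra factor $\h\Csol$ and the correction $(1+\kappa\Csol\h)$; the a priori smallness of $\Honekappa{\errh}$ then lets me absorb all nonlinear contributions by a contraction/bootstrap.

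Finally, $\energy(\solh)-\energy(\sol)\ge0$ is immediate since $\sol$ minimizes over $H^1\supset\VSh$, while the upper bound comes from expanding $\energy(\solh)-\energy(\sol)$ about $\sol$: the linear term vanishes, the quadratic term is $\lesssim\Honekappa{\errh}^2$ by boundedness of $\energy''$, and the cubic and quartic terms are estimated with the Gagliardo--Nirenberg inequality $\norm{L^4}{\errh}^2\lesssim\norm{L^2}{\errh}\,\norm{H^1}{\errh}$, which after $\kappa$-weighting yields the factors $\kappa^{1/2}\Honekappa{\errh}$ and $\kappa\Honekappa{\errh}^2$. The main obstacle throughout is the discrete inf-sup step, since it must be quantitatively sharp in the product $\kappa\Csol\h$ rather than merely qualitative.
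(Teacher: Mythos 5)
Your overall architecture mirrors the paper's own proof (abstract convergence and gauge fixing via Proposition~\ref{prop:abstract_convergence}, a perturbation argument for the discrete inf-sup stability, an error equation from the two criticality conditions $\energy'(\sol)=0$ and $\dualp{\energy'(\solh)}{\testfunh}=0$, Aubin--Nitsche for the $L^2$-bound, and a quartic expansion for the energy), and your variant of the discrete inf-sup step---testing with $\projLtwoisol\testfun$ for the continuous supremizer $\testfun$ and exploiting that only the compact part of $\energy''(\sol)=\abilmagstabsym{\cdot}{\cdot}+K$ survives---can be made to work. However, there is a genuine gap in the quasi-optimality step. Splitting $\errh=(\sol-\testfunTWO_\h)+(\testfunTWO_\h-\solh)$ for an \emph{arbitrary} $\testfunTWO_\h\in\VShperp$ and estimating the consistency term $\dualp{\energy''(\sol)(\testfunTWO_\h-\sol)}{\testfunh}$ by the boundedness of $\energy''$ (Lemma~\ref{lem:Frechet_functional}) yields
\begin{equation}
	\Honekappa{\sol-\solh}\;\lesssim\;(1+\Csol)\,\inf_{\testfunh\in\VShperp}\Honekappa{\sol-\testfunh}
	\;+\;\Csol\,\kappa\bigl(\norm{L^4}{\errh}^2+\norm{L^6}{\errh}^3\bigr),
\end{equation}
i.e.\ a quasi-optimality constant of order $\Csol$, \emph{not} $(1+\kappa\Csol\h)$. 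Since the theorem explicitly requires the hidden constants to be independent of $\Csol$ (which may grow with $\kappa$, and whose control is the whole point of the statement), your argument proves a strictly weaker result; the $L^2$-bound inherits the same defect. The missing idea---the paper's central trick---is to use the \emph{specific} comparison function $\testfunTWO_\h=\projLtwoisol\sol$: by the $\abilmagstabsym{\cdot}{\cdot}$-orthogonality of this Ritz projection, the principal part of the consistency term vanishes, leaving only $\kappa^2$-weighted zeroth-order terms bounded by $\kappa\,\norm{L^2}{\sol-\projLtwoisol\sol}\,\Honekappa{\testfunh}$; the projection's $L^2$-error bound $\norm{L^2}{\sol-\projLtwoisol\sol}\lesssim\h\,\Honekappa{\sol-\projLtwoisol\sol}$ (the paper's Lemma~\ref{lem:proj_L2}) then produces exactly the factor $\kappa\Csol\h$ after applying the discrete inf-sup stability, and quasi-optimality of $\projLtwoisol$ finishes the estimate.

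A related misstep sits inside your discrete inf-sup argument. To bound $\norm{L^2}{\testfun-\projLtwoisol\testfun}$ for the generic supremizer $\testfun$ (which carries no extra regularity), you propose Assumption~\ref{ass:FEM_space}\,(b) combined with the $H^2$-regularity of the $\energy''(\sol)$-problem from Corollary~\ref{cor:du_F_inf_sup_solvability}\,(b). This does not fit: $\projLtwoisol$ is Galerkin-orthogonal with respect to $\abilmagstabsym{\cdot}{\cdot}$, not with respect to $\dualp{\energy''(\sol)\,\cdot}{\cdot}$. If the dual problem is posed with $\energy''(\sol)$ and data $\testfun-\projLtwoisol\testfun$, the term $\dualp{\energy''(\sol)\,\projLtwoisol\testfunFOUR}{\testfun-\projLtwoisol\testfun}$ (with $\testfunFOUR$ the dual solution) does not vanish; it reduces to the compact part and is of size $\Csol\,\norm{L^2}{\testfun-\projLtwoisol\testfun}^2$, which cannot be absorbed once $\Csol\gtrsim 1$. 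The correct tool is the $\abilmagstabsym{\cdot}{\cdot}$-duality of Lemma~\ref{lem:proj_L2}, which gives $\norm{L^2}{\testfun-\projLtwoisol\testfun}\lesssim\h\,\Honekappa{\testfun}$ with a constant free of $\Csol$; with this replacement your perturbation is $\lesssim\kappa\h\,\Honekappa{\testfunh}\Honekappa{\testfun}$ against the main term $\Csolinv\Honekappa{\testfunh}\Honekappa{\testfun}$, so smallness of $\kappa\Csol\h$ suffices, recovering the paper's Lemma~\ref{lem:du_F_inf_sup_solvability_discrete} (which is proved there instead by a Melenk-type corrector construction). With these two repairs your outline becomes a complete proof along essentially the paper's lines.
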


The proof is divided in several steps which are outlined in detail in Section~\ref{sec:proofs}.
The first application of the results are Lagrangian finite elements.
In the following, we denote by $\Th$ a conforming family of partitions of the domain $\Omega$ consisting of simplical elements $K$. For the space $\Pone(K)$ of complex-valued polynomials of degree less than or equal to $1$ on $K$, we consider the finite element space
\begin{equation} \label{eq:defVh}
	\VSh \coloneqq \{ \testfunh  \in H^1(\Omega) \mid   \testfunh |_K \in \Pone(K) 
	\text{  for all  } 
	K \in \Th \} .
\end{equation}
We assume that the partition $\Th$ is shape-regular and
the $L^2$-projection,  defined via $	\ipsymLtwo{\Ltwoproj v}{\testfunh} = \ipsymLtwo{ v}{\testfunh}$ for all $\testfunh \in \VSh$, is $H^1$-stable, i.e.,
\begin{equation} \label{eq:H1_stab_L2_proj}
	\norm{H^1}{ \Ltwoproj \testfun} \lesssim \norm{H^1}{\testfun} ,
\end{equation}
with a constant  independent of $\h$. This condition is always fulfilled for quasi-uniform triangulations, but is also valid for certain adaptively refined meshes.
For a detailed discussion on criteria when \eqref{eq:H1_stab_L2_proj} holds, we refer to \cite{BanY14,DieST21}.
In this setting, we obtain convergence rates which are explicit in the parameter $\kappa$ and the mesh width $\h$.

\begin{corollary} \label{cor:main_Lagrange}
	Let the conditions of Theorem~\ref{thm:main} hold, and assume in addition that $\Omega$ is convex.
	For Lagrangian finite elements which satisfy \eqref{eq:H1_stab_L2_proj}, the error bounds in Theorem~\ref{thm:main}
	can be further estimated by
	\begin{align}
		\Honekappa{ \sol - \solh}  \lesssim \kappa^2 \h
		\qquad
		\text{and}
		\qquad
		\norm{L^2(\Omega)}{ \sol - \solh}  \lesssim  \Csol \kappa^2 \h^2 ,
	\end{align}
	where the hidden constants are independent of $\kappa$, $\Csol$, and $\h$.
\end{corollary}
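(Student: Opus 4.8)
The plan is to combine the abstract quasi-best approximation estimates from Theorem~\ref{thm:main} with concrete interpolation and approximation properties of the Lagrangian finite element space $\VSh$ and the $H^2$-regularity of the continuous minimizer $\sol$ established in Theorem~\ref{thm:cont_minimizer}. The key point is that Theorem~\ref{thm:main} reduces the error to the best-approximation quantity $\inf_{\testfunh \in \VShperp}\Honekappa{\sol-\testfunh}$ over the constrained space $\VShperp = \VSh \cap (\ci\sol)^\perp$, and the task is to bound this quantity explicitly in $\kappa$ and $\h$.

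First I would bound the best approximation over the \emph{unconstrained} space $\VSh$. Using the standard nodal interpolant $\Ih \sol \in \VSh$ together with the usual shape-regular interpolation estimates, one has $\norm{L^2}{\nabla(\sol - \Ih\sol)} \lesssim \h \, \norm{H^2}{\sol}$ and $\norm{L^2}{\sol - \Ih\sol} \lesssim \h^2 \norm{H^2}{\sol}$. By the definition of the $\HonekappaSpace$-norm in \eqref{eq:def_norms}, this gives $\Honekappa{\sol - \Ih\sol}^2 = \norm{L^2}{\nabla(\sol-\Ih\sol)}^2 + \kappa^2 \norm{L^2}{\sol-\Ih\sol}^2 \lesssim \h^2 \norm{H^2}{\sol}^2 + \kappa^2 \h^4 \norm{H^2}{\sol}^2 \lesssim \h^2(1+\kappa^2\h^2)\norm{H^2}{\sol}^2$. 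Since $\kappa\h$ is assumed small and $\norm{H^2}{\sol}\lesssim \Htwokappa{\sol}\lesssim\kappa^2$ by Theorem~\ref{thm:cont_minimizer}, this yields $\Honekappa{\sol-\Ih\sol}\lesssim \kappa^2\h$.

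The main obstacle is that $\Ih\sol$ need not lie in the \emph{constrained} space $\VShperp$, since there is no reason for $\ipsymLtwo{\Ih\sol}{\ci\sol}=0$. I would repair this by correcting the interpolant along the one-dimensional direction $\ci\sol$, setting $\testfunh := \Ih\sol - \alpha_\h (\ci P_\h \sol)$ with a suitable scalar $\alpha_\h$ chosen so that $\testfunh \in \VShperp$, where $P_\h$ is the $L^2$-projection onto $\VSh$ so that $\ci P_\h\sol \in \VSh$ is an admissible correction direction. Here the $H^1$-stability \eqref{eq:H1_stab_L2_proj} of the $L^2$-projection is precisely what is needed to control the correction term in the $H^1$- and hence $\HonekappaSpace$-norm. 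The correction coefficient $\alpha_\h$ is small because $\Ih\sol$ is already nearly orthogonal to $\ci\sol$: indeed $|\ipsymLtwo{\Ih\sol}{\ci\sol}| = |\ipsymLtwo{\Ih\sol - \sol}{\ci\sol}|$ since $\ipsymLtwo{\sol}{\ci\sol}=0$ (as $\sol$ and $\ci\sol$ are $L^2$-orthogonal), and this is $\lesssim \norm{L^2}{\sol - \Ih\sol}\,\norm{L^2}{\sol} \lesssim \h^2\kappa^2$. Dividing by $\norm{L^2}{\sol}^2$ and tracking the $\kappa$-dependence shows the correction does not spoil the estimate $\inf_{\testfunh\in\VShperp}\Honekappa{\sol-\testfunh}\lesssim\kappa^2\h$.

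Finally I would substitute this best-approximation bound into the two error estimates of Theorem~\ref{thm:main}. Since $\kappa\Csol\h$ is small, the factor $(1+\kappa\Csol\h)$ is $\lesssim 1$, so the $\HonekappaSpace$-estimate becomes $\Honekappa{\sol-\solh}\lesssim\kappa^2\h$, and the $L^2$-estimate picks up the extra factor $\h\,\Csol$ to give $\norm{L^2}{\sol-\solh}\lesssim \Csol\,\kappa^2\h^2$. It remains only to record that Assumption~\ref{ass:FEM_space} is genuinely satisfied for this space, which is the content of the remark following that assumption: property~(a) is density of Lagrangian elements in $H^1$, and property~(b) follows from an Aubin--Nitsche duality argument using the $H^2$-regularity of the auxiliary problem in Lemma~\ref{lem:wepo_abilmagstab} on the convex domain $\Omega$, replacing $\norm{L^2}{f}$ by $\Htwokappa{w}$ and invoking the interpolation estimate once more. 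This verification is routine given the $H^2$-regularity already in hand, so the only genuinely delicate point is the orthogonality correction described above.
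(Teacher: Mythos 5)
Your proposal is correct and is essentially the paper's own argument: the rank-one correction of the interpolant along $\ci\,\Ltwoproj\sol$ is exactly the paper's map $\Ltwotwist$ from Lemma~\ref{lem:proj_Lagrange} (note $\ci\,\Ltwoproj\sol = \Ltwoproj(\ci\sol)$, so your correction direction coincides with the paper's, including the implicit lower bound on $\norm{L^2}{\sol}$ and the smallness of $\kappa\h$ needed to control the denominator), and the conclusion via Theorem~\ref{thm:main} together with $\Htwokappa{\sol}\lesssim\kappa^2$ is identical. The only cosmetic differences are that the paper uses the $L^2$-projection $\Ltwoproj\sol$ (Lemma~\ref{lem:proj}, where the $H^1$-stability \eqref{eq:H1_stab_L2_proj} enters) rather than the nodal interpolant $\Ih\sol$ as the base approximant, and routes everything through the constrained Ritz projection $\projLtwoisol$, which simultaneously verifies Assumption~\ref{ass:FEM_space}(b) --- the step you correctly identify but only sketch, and which rests on C\'ea's lemma plus the $H^2$-regularity of Lemma~\ref{lem:wepo_abilmagstab} together with the same orthogonality-correction trick (not an Aubin--Nitsche duality, which the paper reserves for the $L^2$-estimate).
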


The proof is presented in Section~\ref{sec:proofs}.

\section{Numerical experiments}
\label{sec:num_exp}

Before we present the proof of our main result, we illustrate our theoretical findings with some numerical examples confirming the rates and the $\kappa$-dependence in our error bounds. In the following we will hence compute discrete minimizers
$$
\solh = \underset{\testfunh \in \VSh}{\mbox{arg\hspace{2pt}min}}\hspace{1pt}\energy(\testfunh) 
$$
in $\mathcal{P}_1$-spaces $\VSh$ for different mesh sizes $h$ and different material parameters $\kappa$.
Note that, just like analytical minimizers, discrete minimizers can be at most unique up to gauge transformations, i.e., if $\solh \in \VSh$ is a minimizer, so is $\exp(-\ci \theta) \solh$ for any $\theta \in [-\pi,\pi)$.

\subsection{Implementation}

For the discretization in space with linear Lagrange finite elements, we use the open source Python tool FEniCS \cite[version 2019.2.0]{Fenics}. 
To compute a discrete minimizer, we applied a steepest descent approach
using an implicit Euler method for the $L^2$ gradient flow. 
A direct application yields the following nonlinear iteration
\begin{align}
	\ipsymLtwo{\solh^{n+1}}{ \testfunh } = \ipsymLtwo{\solh^{n}}{ \testfunh } - \tau \, \dualp{\energy'(\solh^{n+1})}{\testfunh} ,
\end{align}
where $\tau>0$ is some parameter. To avoid the solution of nonlinear systems several times, we replace $\energy'(u^{n+1})$ by the linearization
\begin{align}
	\dualp{\energy'(\solh^{n+1})}{\testfunh} \to  \abilmag{\solh^{n+1}}{\testfunh} + \kappa^2 \Real \int_\Omega (|\solh^{n}|^2 - 1) \solh^{n+1} \testfunh^* \,dx,
\end{align}
and thus have to solve the following linear system for $\solh^{n+1} \in \VSh$
\begin{align}
	\ipsymLtwo{\solh^{n+1}}{ \testfunh } 
	+ 
	\tau \, \abilmag{\solh^{n+1}}{\testfunh} 
	+
	\tau \, \kappa^2 \Real \int_\Omega (|\solh^{n}|^2 - 1) \solh^{n+1} \testfunh^* \,dx
	= 
	\ipsymLtwo{\solh^{n}}{ \testfunh }
\end{align}
for all $\testfunh \in \VSh$.
In our experiments, we  set $\Omega = [0,1] \times [0,1] \subset \R^2$,
and use 
on the finest grid
the initial value $\sol_0 = 0.8 + 0.6 \ci$.
For the coarser grids, we project this reference solution and use this as a starting value.
The magnetic potential is chosen as
\begin{equation}
	\MagF(x,y) :=  \sqrt{2} \begin{pmatrix}
		\sin(\pi x) \cos(\pi y) \\ -  \cos(\pi x) \sin(\pi y)
	\end{pmatrix} , 
\end{equation}
and satisfies the assumptions in \eqref{eq:ass_\MagF_for_H2}.
Further, we set $\tau = \kappa^{-2}$, and used the stopping criterion
$ \kappa^{-2} | \energy(\solh^{n+1}) - \energy(\solh^{n})| < \delta $
for a tolerance $\delta = 10^{-9}$. Below this tolerance, we use a Newton method for the equation $\energy^{\prime}(\solh)=0$ with the previous approximation $\solh^{n}$ as starting value and the same stopping criterion, but $\delta = 10^{-12}$.
The code to reproduce the results presented in this paper is available 
at \url{\mycode}.

Let us note that the steepest descent approach as described above can potentially also find local minimizers of $\energy$ in $\VSh$, depending on the choice of the initial value. However, we also note that global minimizers are typically found more easily and that local minimizers (that are not global minimizers) can be identified and discarded by comparing the energy levels. 

Finally, let us also mention that the phase of the minimizer (i.e. which $\exp(- \ci \theta) u_h$ we find for some $\theta \in [-\pi,\pi)$) depends only on the choice of the initial value. Hence, it is important to start the steepest descent method always with the same initial value in order to ensure a reasonable comparison between minimizers on different meshes (since phase differences might otherwise dominate the error).

\subsection{Numerical results}

We first illustrate the convergence in the spatial parameter $\h$ for different values of $\kappa$.
To this end, we computed a reference solution on a finer grid using $\h_{\text{max}} \sim  2.5 \cdot 10^{-3}$. 
In order to numerically verify Assumption~\ref{ass:cinfsup} on the local uniqueness of the computed minimizers, we proceeded as in Remark~\ref{rem:loc_uniq} and computed the $5$ smallest eigenvalues (in absolute value) of $\energy''(u_h)$ and collected them in Table~\ref{tab:eigs_L2}.
We observe that the first eigenvalue is essentially zero up to numerical precision and we verified that it indeed belongs to $\ci \solh$. The second smallest eigenvalue is clearly bounded away from zero, and the computed minimum is hence indeed locally unique in the sense of Definition~\ref{def-local-uniqueness}.

\begin{table}
	\begin{center}
		\begin{tabular}{ |c|c|c|c|c|c| } 
			\hline
			$\kappa$ & 		$\lambda_1 $ &	$\lambda_2 $ &	$\lambda_3 $ &	$\lambda_4 $ &	$\lambda_5 $  \\
			\hline
			$8$ & \Zeile{ $\sim 10^{-12}$ }{ 2.65 }{ 2.65 }{ 7.54 }{ 7.71  } \\ 
			\hline
			$ 10$	& \Zeile{ $\sim 10^{-12}$ }{ 3.37 }{ 3.37 }{ 3.78 }{ 10.70 }\\ 
			\hline
			$ 17$	& \Zeile{ $\sim 10^{-8}$ }{ 0.46 }{ 0.57 }{ 2.63 }{ 2.75 } \\	
			\hline
			$ 24$	& \Zeile{ $\sim 10^{-9}$ }{ 0.51 }{ 2.03 }{ 2.03 }{ 5.52 } \\
			\hline
		\end{tabular}
	\end{center}
	\caption{
		The numerically computed $5$ smallest eigenvalues of $\energy''(\solh)$ with reference solution $\solh$ and $\kappa = 8,10,17,24$.
	}
	\label{tab:eigs_L2}
\end{table}

In order to compare the results for different values of $\kappa$, we divide the error in the $\HonekappaSpace$- and $L^2$-norm by $\kappa^2$
and the energy by $\kappa^4$, see Figure~\ref{fig:convergence}. Here we recall that according to Corollary~\ref{cor:main_Lagrange} we expect the $\HonekappaSpace$-error to convergence with the rate $\kappa^2 \h$, the $L^2$-error with the rate $\kappa^2 \h^2$ and the energy-error with the rate $\kappa^4 \h^2$. 
Indeed, we observe the predicted asymptotic convergence in $\h$ and, in particular, the numerical experiments confirm the $\kappa$-scaling in our error estimates. The plot further indicates that the constants in front of the normalized errors are independent of $\kappa$.
Further, we observe in Figure~\ref{fig:uniformness} the uniform boundedness of $\Honekappa{\solh}$ and $\energy(\solh)$ as predicted in Lemma~\ref{lem:bound_general_uH}.

\begin{figure}
	\centering
	\begin{subfigure}{0.45\textwidth}
		\resizebox{1.0\textwidth}{!}{
\begin{tikzpicture}


\begin{axis}[
legend cell align={left},
legend style={
  fill opacity=0.8,
  draw opacity=1,
  text opacity=1,
  at={(0.03,0.97)},
  anchor=north west,
  draw=white!80!black,
  /tikz/every even column/.append style={column sep=3mm}
},
log basis x={10},
log basis y={10},
tick align=outside,
tick pos=left,
title={\large$\kappa^{-2}$-weighted $H^1_\kappa$-error},
x grid style={white!69.0196078431373!black},
xmin=0.00405395301108757, xmax=0.361336322303478,
xmode=log,
xtick style={color=black},
y grid style={white!69.0196078431373!black},
ymin=0.00034704390290097, ymax=1,
ymode=log,
ytick style={color=black},
legend columns=2,
legend to name=Legendforall,
xlabel style={font=\color{white!15!black}},
xlabel={maximal mesh width $h$},
]
\addplot [semithick, color0, mark=asterisk, mark size=3, mark options={solid}]
table {%
	0.294627825494395 0.187756253788867
	0.158363330156169 0.125713923211823
	0.0791364521303744 0.214270410273684
	0.0397638752183648 0.00998237369550496
	0.0198857667264535 0.00470217384436248
	0.00994323397220818 0.00217541209420253
	0.00497183342869752 0.000984159088477717
};
\addlegendentry[color=legendcolor]{$\kappa = 8$}
\addplot [semithick, color0,densely dotted, mark=*, mark size=1.5, mark options={solid}]
table {%
	0.294627825494395 0.0831159822325672
	0.158363330156169 0.0411075743807961
	0.0791364521303744 0.0202679706790136
	0.0397638752183648 0.0100384985540841
	0.0198857667264535 0.00477496962494991
	0.00994323397220818 0.00221791485142096
	0.00497183342869752 0.00100245370590739
};
\addlegendentry[color=legendcolor]{$\kappa = 8$}
\addplot [semithick, color1, mark=asterisk, mark size=3, mark options={solid}]
table {%
	0.294627825494395 0.144750155628437
	0.158363330156169 0.18080647160855
	0.0791364521303744 0.177474552543771
	0.0397638752183648 0.00794967688500174
	0.0198857667264535 0.00369490893034111
	0.00994323397220818 0.00170777072635862
	0.00497183342869752 0.000772109340186972
};
\addlegendentry[color=legendcolor]{$\kappa = 10$}
\addplot [semithick, color1,densely dotted, mark=*, mark size=1.5, mark options={solid}]
table {%
	0.294627825494395 0.0657696935003186
	0.158363330156169 0.0325203600194851
	0.0791364521303744 0.0158945772318894
	0.0397638752183648 0.00789137182395622
	0.0198857667264535 0.00374857419342597
	0.00994323397220818 0.00174115139519233
	0.00497183342869752 0.000786425992192257
};
\addlegendentry[color=legendcolor]{$\kappa = 10$}
\addplot [semithick, color3, mark=asterisk, mark size=3, mark options={solid}]
table {%
	0.294627825494395 0.0760260430003199
	0.158363330156169 0.0948015789935594
	0.0791364521303744 0.0343121568696336
	0.0397638752183648 0.0153100704480831
	0.0198857667264535 0.00428876579308135
	0.00994323397220818 0.00192010629522429
	0.00497183342869752 0.000862385372670657
};
\addlegendentry[color=legendcolor]{$\kappa = 17$}
\addplot [semithick, color3,densely dotted, mark=*, mark size=1.5, mark options={solid}]
table {%
	0.294627825494395 0.0661527497493184
	0.158363330156169 0.0358686994738739
	0.0791364521303744 0.0178148969931917
	0.0397638752183648 0.00877798173969964
	0.0198857667264535 0.00416970404828101
	0.00994323397220818 0.0019344181028929
	0.00497183342869752 0.000876368869041339
};
\addlegendentry[color=legendcolor]{$\kappa = 17$}
\addplot [semithick, color2, mark=asterisk, mark size=3, mark options={solid}]
table {%
	0.294627825494395 0.061239859105911
	0.158363330156169 0.0656540230510088
	0.0791364521303744 0.0702198156268603
	0.0397638752183648 0.0143268026246104
	0.0198857667264535 0.00491216974360704
	0.00994323397220818 0.00202357986256452
	0.00497183342869752 0.000886690037777962
};
\addlegendentry[color=legendcolor]{$\kappa = 24$}
\addplot [semithick, color2, densely dotted, mark=*, mark size=1.5, mark options={solid}]
table {%
	0.294627825494395 0.0552415441159161
	0.158363330156169 0.0365569845207141
	0.0791364521303744 0.0184206433538859
	0.0397638752183648 0.00897820526546169
	0.0198857667264535 0.0042692029680461
	0.00994323397220818 0.00197581101856826
	0.00497183342869752 0.00089536173286726
};
\addlegendentry[color=legendcolor]{$\kappa = 24$}
\addplot [semithick, gray, dashed]
table {%
	0.294627825494395 0.0525447124214099
	0.158363330156169 0.0282429387896053
	0.0791364521303744 0.0141134059970867
	0.0397638752183648 0.00709159559048354
	0.0198857667264535 0.00354648069023141
	0.00994323397220818 0.00177330287365683
	0.00497183342869752 0.000886690037777962
};
\end{axis}
\end{tikzpicture}}	
	\end{subfigure}
	\hfill
	\begin{subfigure}{0.45\textwidth}
		\resizebox{1.0\textwidth}{!}{
\begin{tikzpicture}


\begin{axis}[
legend cell align={left},
legend style={
  fill opacity=0.8,
  draw opacity=1,
  text opacity=1,
  at={(0.03,-0.57)},
  anchor=south,
  draw=white!80!black
},
legend columns=4,
log basis x={10},
log basis y={10},
tick align=outside,
tick pos=left,
title={\large$\kappa^{-2}$-weighted $L^2$-error},
x grid style={white!69.0196078431373!black},
xmin=0.00405395301108757, xmax=0.361336322303478,
xmode=log,
xtick style={color=black},
y grid style={white!69.0196078431373!black},
ymin=6.32426386908231e-07, ymax=0.04,
ymode=log,
ytick style={color=black},
xlabel style={font=\color{white!15!black}},
xlabel={maximal mesh width $h$},
]
\addplot [semithick, color0, mark=asterisk, mark size=3, mark options={solid}]
table {%
0.294627825494395 0.0231888714447355
0.158363330156169 0.0137849228674858
0.0791364521303744 0.0240993739178745
0.0397638752183648 0.000223299236736931
0.0198857667264535 5.50207302372507e-05
0.00994323397220818 1.30973397159786e-05
0.00497183342869752 2.77866291408143e-06
};
\addplot [semithick, color0,densely dotted, mark=*, mark size=1.5, mark options={solid}]
table {%
0.294627825494395 0.00606812128070931
0.158363330156169 0.00172788602675387
0.0791364521303744 0.000457517125164342
0.0397638752183648 0.00011746553759998
0.0198857667264535 2.92215113672642e-05
0.00994323397220818 7.1175807974837e-06
0.00497183342869752 1.60363163158123e-06
};
\addplot [semithick, color1, mark=asterisk, mark size=3, mark options={solid}]
table {%
0.294627825494395 0.0154229941681979
0.158363330156169 0.0180505195115027
0.0791364521303744 0.0176053546020039
0.0397638752183648 0.000201063832983208
0.0198857667264535 4.10484132247657e-05
0.00994323397220818 9.7404411287472e-06
0.00497183342869752 2.04986368810559e-06
};
\addplot [semithick, color1,densely dotted, mark=*, mark size=1.5, mark options={solid}]
table {%
0.294627825494395 0.00449954428121882
0.158363330156169 0.00132847255510768
0.0791364521303744 0.000356726430366279
0.0397638752183648 9.28804507599887e-05
0.0198857667264535 2.3009445076948e-05
0.00994323397220818 5.60141877440793e-06
0.00497183342869752 1.25963253870105e-06
};
\addplot [semithick, color3, mark=asterisk, mark size=3, mark options={solid}]
table {%
0.294627825494395 0.00428968277412085
0.158363330156169 0.00579159838842
0.0791364521303744 0.00172044540723912
0.0397638752183648 0.000728479947763116
0.0198857667264535 7.35333935629601e-05
0.00994323397220818 1.72432443629933e-05
0.00497183342869752 3.47288019594613e-06
};
\addplot [semithick, color3,densely dotted, mark=*, mark size=1.5, mark options={solid}]
table {%
0.294627825494395 0.00344021915169813
0.158363330156169 0.00132840764653573
0.0791364521303744 0.000384164768667122
0.0397638752183648 0.000101292594089078
0.0198857667264535 2.544959847844e-05
0.00994323397220818 6.20224845877356e-06
0.00497183342869752 1.40115062472201e-06
};
\addplot [semithick, color2, mark=asterisk, mark size=3, mark options={solid}]
table {%
0.294627825494395 0.0026540222071163
0.158363330156169 0.00284427519166906
0.0791364521303744 0.00299171161339595
0.0397638752183648 0.000460641271356989
0.0198857667264535 0.000100616780376882
0.00994323397220818 2.30148002175623e-05
0.00497183342869752 4.57838016336046e-06
};
\addplot [semithick, color2,densely dotted, mark=*, mark size=1.5, mark options={solid}]
table {%
0.294627825494395 0.00219385988130862
0.158363330156169 0.00118689694058739
0.0791364521303744 0.000378377585166436
0.0397638752183648 0.000101386011517368
0.0198857667264535 2.59164458209176e-05
0.00994323397220818 6.32634371573772e-06
0.00497183342869752 1.43110187326343e-06
};
\addplot [semithick, gray, dashed]
table {%
0.294627825494395 0.0160777855662791
0.158363330156169 0.00464502400470668
0.0791364521303744 0.00115993021908055
0.0397638752183648 0.000292857341565228
0.0198857667264535 7.32425392090621e-05
0.00994323397220818 1.83119261753959e-05
0.00497183342869752 4.57838016336046e-06
};
\legend{}
\end{axis}

\end{tikzpicture}}	
	\end{subfigure}
	\hfill
	\begin{subfigure}{0.45\textwidth}
		\resizebox{1.0\textwidth}{!}{
\begin{tikzpicture}


\begin{axis}[
legend cell align={left},
legend style={
  fill opacity=0.8,
  draw opacity=1,
  text opacity=1,
  at={(0.03,0.97)},
  anchor=north west,
  draw=white!80!black
},
log basis x={10},
log basis y={10},
tick align=outside,
tick pos=left,
title={\large $\kappa^{-4}$-weighted energy-error},
x grid style={white!69.0196078431373!black},
ymin=1.5784742274581e-07, ymax=0.0026678922926463,
xmode=log,
xtick style={color=black},
y grid style={white!69.0196078431373!black},
ymin=1.5784742274581e-07, ymax=0.0026678922926463,
ymode=log,
ytick style={color=black},
xlabel style={font=\color{white!15!black}},
xlabel={maximal mesh width $h$},
]
\addplot [semithick, color0, mark=asterisk, mark size=3, mark options={solid}]
table {%
0.294627825494395 0.000720846856706405
0.158363330156169 0.000692078526100039
0.0791364521303744 0.000198378522277987
0.0397638752183648 5.25609989919214e-05
0.0198857667264535 1.30002116290676e-05
0.00994323397220818 3.07384996799847e-06
0.00497183342869752 6.13202606155813e-07
};
\addplot [semithick, color1, mark=asterisk, mark size=3, mark options={solid}]
table {%
0.294627825494395 0.000789323773208926
0.158363330156169 0.000427443046844719
0.0791364521303744 0.000121731180610555
0.0397638752183648 3.24329029372437e-05
0.0198857667264535 8.01308242827967e-06
0.00994323397220818 1.89444946702852e-06
0.00497183342869752 3.77574619056104e-07
};
\addplot [semithick, color3, mark=asterisk, mark size=3, mark options={solid}]
table {%
0.294627825494395 0.000401723696809746
0.158363330156169 0.000239827022458215
0.0791364521303744 0.000137799631373733
0.0397638752183648 3.89472470171429e-05
0.0198857667264535 9.82636061154051e-06
0.00994323397220818 2.33093872775727e-06
0.00497183342869752 4.67368814592243e-07
};
\addplot [semithick, color2, mark=asterisk, mark size=3, mark options={solid}]
table {%
0.294627825494395 0.000235808158442704
0.158363330156169 0.000169127779760668
0.0791364521303744 8.27031925769143e-05
0.0397638752183648 3.89904035490416e-05
0.0198857667264535 1.01609957225683e-05
0.00994323397220818 2.42405356029764e-06
0.00497183342869752 4.88059634649743e-07
};
\addplot [semithick, gray, dashed]
table {%
0.294627825494395 0.00171390707400225
0.158363330156169 0.000495163930863367
0.0791364521303744 0.000123649653096551
0.0397638752183648 3.12188682522839e-05
0.0198857667264535 7.8077236165895e-06
0.00994323397220818 1.95206856573853e-06
0.00497183342869752 4.88059634649743e-07
};
\legend{}
\end{axis}

\end{tikzpicture}}	
	\end{subfigure}
	\hfill
	\begin{subfigure}{0.45\textwidth}
		\vspace{-21mm}
		\hspace{8mm}
 	 \resizebox{0.84\textwidth}{!}{\ref{Legendforall}}
	\end{subfigure}
	\caption{Convergence in the mesh size $\h$ for $\kappa$-weighted errors
		in the $\HonekappaSpace$- and $L^2$-norm and for the energy,
		for $\kappa = 8,10,17,24$. The errors between $\sol$ and $\solh$ in $L^2$ and $\HonekappaSpace$ are scaled by $\kappa^{-2}$ and the error in energy by $\kappa^{-4}$.
		The dotted lines indicate the corresponding errors (in $L^2$ and $\HonekappaSpace$ respectively) between $\sol$ and its 
		best-approximation $\textup{R}_{\kappa,\h}(u)$ in $\VSh$ with respect to 
		$\abilmagstabsym{\cdot}{\cdot}$, cf. \eqref{best-approxi-H1kappa}.
		The dashed lines indicate order $\mathcal{O}(\h)$ in the upper left figure, and 
		order $\mathcal{O}(\h^2)$ in the upper left and bottom right figure.
	}
	\label{fig:convergence}
\end{figure}
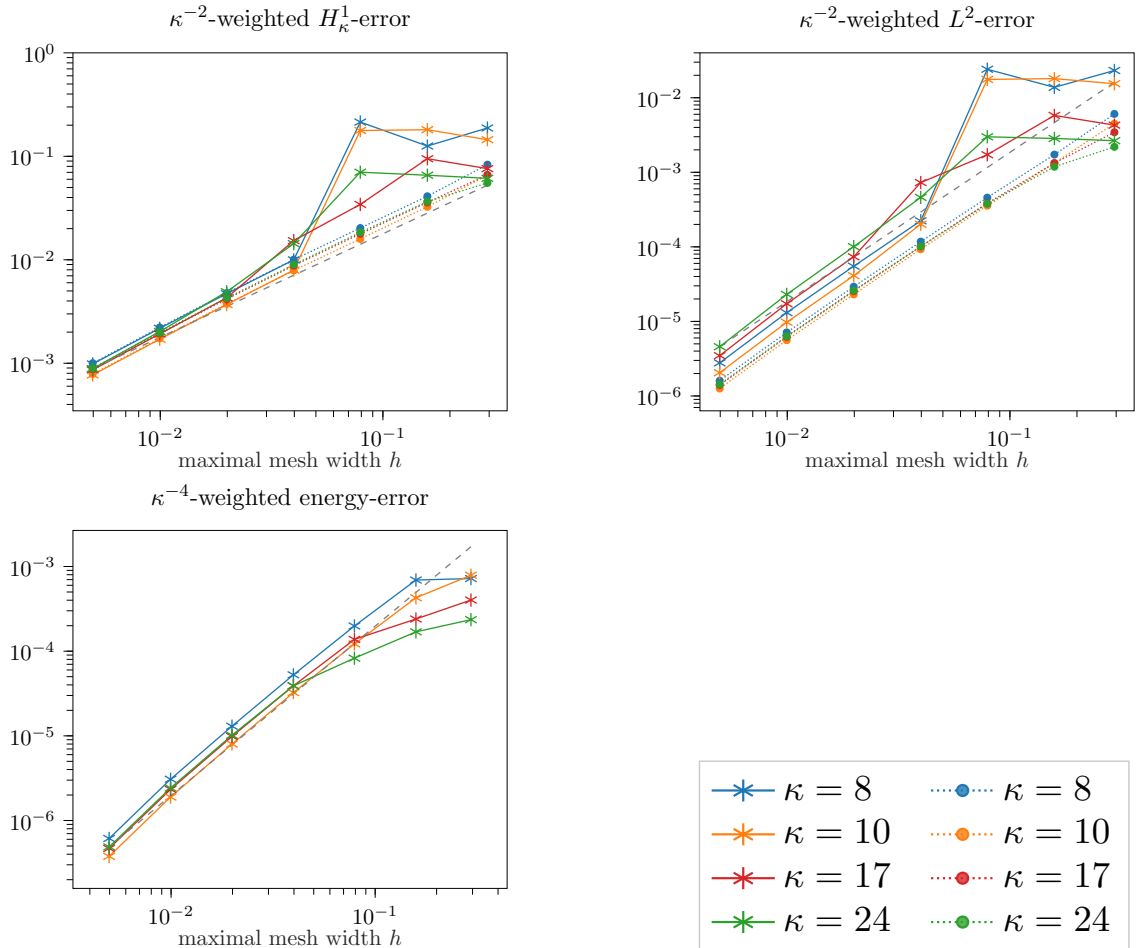

\begin{figure}
	\centering
	\begin{subfigure}{0.45\textwidth}
		\resizebox{1.0\textwidth}{!}{
\begin{tikzpicture}

\begin{axis}[
legend cell align={left},
legend style={
  fill opacity=0.8,
  draw opacity=1,
  text opacity=1,
  at={(0.03,0.97)},
  anchor=north west,
  draw=white!80!black,
  /tikz/every even column/.append style={column sep=3mm}
},
log basis x={10},
log basis y={10},
tick align=outside,
tick pos=left,
title={\large$\kappa^{-1}$-weighted $H^1_\kappa$-norm},
x grid style={white!69.0196078431373!black},
xmin=0.004, xmax=0.5,
xmode=log,
xtick style={color=black},
y grid style={white!69.0196078431373!black},
ymin=0.1, ymax=1.2,
ytick style={color=black},
legend columns=4,
xlabel style={font=\color{white!15!black}},
xlabel={maximal mesh width $h$},
]
\addplot [semithick, color0, mark=asterisk, mark size=3, mark options={solid}]
table {%
0.294627825494395 0.514401168043143
0.158363330156169 0.811869089884772
0.0791364521303744 0.913859595264147
0.0397638752183648 0.939726535166265
0.0198857667264535 0.946574925856512
0.00994323397220818 0.948278890596904
0.00497183342869752 0.948700283533424
};
\addplot [semithick, color1, mark=asterisk, mark size=3, mark options={solid}]
table {%
0.294627825494395 0.417479625304531
0.158363330156169 0.745380738833216
0.0791364521303744 0.824444492260676
0.0397638752183648 0.845729527170419
0.0198857667264535 0.851446858467175
0.00994323397220818 0.852866879728289
0.00497183342869752 0.853217652795658
};
\addplot [semithick, color3, mark=asterisk, mark size=3, mark options={solid}]
table {%
0.294627825494395 0.362387981191403
0.158363330156169 0.56133771645226
0.0791364521303744 0.835631639361613
0.0397638752183648 0.910916062611938
0.0198857667264535 0.932330453552667
0.00994323397220818 0.93767568616111
0.00497183342869752 0.938995366743317
};
\addplot [semithick, color2, mark=asterisk, mark size=3, mark options={solid}]
table {%
0.294627825494395 0.24921514809579
0.158363330156169 0.477769520000272
0.0791364521303744 0.667854785983945
0.0397638752183648 0.907515286403963
0.0198857667264535 0.951484305324516
0.00994323397220818 0.963023480356877
0.00497183342869752 0.965860492400518
};
\legend{}
\end{axis}
\end{tikzpicture}}	
	\end{subfigure}
	\hfill
	\begin{subfigure}{0.45\textwidth}
		\resizebox{1.0\textwidth}{!}{
\begin{tikzpicture}

\definecolor{color0}{rgb}{0.12156862745098,0.466666666666667,0.705882352941177}
\definecolor{color1}{rgb}{1,0.498039215686275,0.0549019607843137}
\definecolor{color2}{rgb}{0.172549019607843,0.627450980392157,0.172549019607843}
\definecolor{color3}{rgb}{0.83921568627451,0.152941176470588,0.156862745098039}
\definecolor{color4}{rgb}{0.580392156862745,0.403921568627451,0.741176470588235}
\definecolor{color5}{rgb}{0.549019607843137,0.337254901960784,0.294117647058824}

\begin{axis}[
legend cell align={left},
legend style={
	fill opacity=0.8,
	draw opacity=1,
	text opacity=1,
	at={(0.03,0.97)},
	anchor=north west,
	draw=white!80!black
},
log basis x={10},
tick align=outside,
tick pos=left,
title={\large$\kappa^{-2}$-weighted energy},
x grid style={white!69.0196078431373!black},
xmin=0.00405395301108757, xmax=0.361336322303478,
xmode=log,
xtick style={color=black},
y grid style={white!69.0196078431373!black},
ymin=0.02, ymax=0.42,
ytick style={color=black},
legend columns=4,
legend to name=Legendforall2,
xlabel style={font=\color{white!15!black}},
xlabel={maximal mesh width $h$},
]
\addplot [semithick, color0, mark=asterisk, mark size=3, mark options={solid}]
table {%
	0.294627825494395 0.174667913751863
	0.158363330156169 0.172826740593055
	0.0791364521303744 0.141229940348444
	0.0397638752183648 0.131897618858136
	0.0198857667264535 0.129365728466913
	0.00994323397220818 0.128730441320605
	0.00497183342869752 0.128572959889447
};
\addlegendentry[color=legendcolor]{ $\kappa = 8$}
\addplot [semithick, color0,densely dotted, mark=*, mark size=1.5, mark options={solid}]
table {%
	0.294627825494395 0.307966533955576
	0.158363330156169 0.181520946333042
	0.0791364521303744 0.142167624331016
	0.0397638752183648 0.132072576409203
	0.0198857667264535 0.129401709133532
	0.00994323397220818 0.128739490297263
	0.00497183342869752 0.12857498871627
};
\addlegendentry[color=legendcolor]{$ \kappa = 8$}
\addplot [semithick, color1, mark=asterisk, mark size=3, mark options={solid}]
table {%
	0.294627825494395 0.183535574986566
	0.158363330156169 0.147347502350145
	0.0791364521303744 0.116776315726729
	0.0397638752183648 0.107846487959398
	0.0198857667264535 0.105404505908502
	0.00994323397220818 0.104792642612376
	0.00497183342869752 0.104640955127579
};
\addlegendentry[color=legendcolor]{$\kappa = 10$}
\addplot [semithick, color1,densely dotted, mark=*, mark size=1.5, mark options={solid}]
table {%
	0.294627825494395 0.278357444397727
	0.158363330156169 0.155981999381869
	0.0791364521303744 0.117693477658881
	0.0397638752183648 0.108018426813211
	0.0198857667264535 0.105439128196237
	0.00994323397220818 0.104801464613601
	0.00497183342869752 0.104642912856309
};
\addlegendentry[color=legendcolor]{$\kappa = 10$}
\addplot [semithick, color3, mark=asterisk, mark size=3, mark options={solid}]
table {%
	0.294627825494395 0.198316077382189
	0.158363330156169 0.151527938494597
	0.0791364521303744 0.122042022471181
	0.0397638752183648 0.0934736833921267
	0.0198857667264535 0.0850577472209076
	0.00994323397220818 0.0828915702964943
	0.00497183342869752 0.0823529985915896
};
\addlegendentry[color=legendcolor]{$\kappa = 17$}
\addplot [semithick, color3,densely dotted, mark=*, mark size=1.5, mark options={solid}]
table {%
	0.294627825494395 0.402158437117071
	0.158363330156169 0.253118243401632
	0.0791364521303744 0.129651541602237
	0.0397638752183648 0.0944299008404986
	0.0198857667264535 0.0852049327704375
	0.00994323397220818 0.0829237392079835
	0.00497183342869752 0.082360029996195
};
\addlegendentry[color=legendcolor]{$\kappa = 17$}
\addplot [semithick, color2, mark=asterisk, mark size=3, mark options={solid}]
table {%
	0.294627825494395 0.203950290466262
	0.158363330156169 0.16554239234541
	0.0791364521303744 0.115761830127568
	0.0397638752183648 0.090583263647513
	0.0198857667264535 0.0739775247394644
	0.00994323397220818 0.0695210460539965
	0.00497183342869752 0.0684059135528233
};
\addlegendentry[color=legendcolor]{$\kappa = 24$}
\addplot [semithick, color2,densely dotted, mark=*, mark size=1.5, mark options={solid}]
table {%
	0.294627825494395 0.370387964660919
	0.158363330156169 0.364801903252744
	0.0791364521303744 0.167934721210636
	0.0397638752183648 0.0936497756663386
	0.0198857667264535 0.0743624619664625
	0.00994323397220818 0.0695916363316121
	0.00497183342869752 0.0684206422077068
};
\addlegendentry[color=legendcolor]{$\kappa = 24$}
\end{axis}

\end{tikzpicture}}	
	\end{subfigure}
	\ref{Legendforall2}
	\caption{
		Boundedness of the scaled $\HonekappaSpace$-norm and energy $\energy$ with respect to $\h$ for $\kappa = 8,10,17,24$.
		The dotted lines show the energy of the best-approximation $\textup{R}_{\kappa,\h}(u)$ in $\VSh$
		with respect to $\abilmagstabsym{\cdot}{\cdot}$.
	}
	\label{fig:uniformness}
\end{figure}
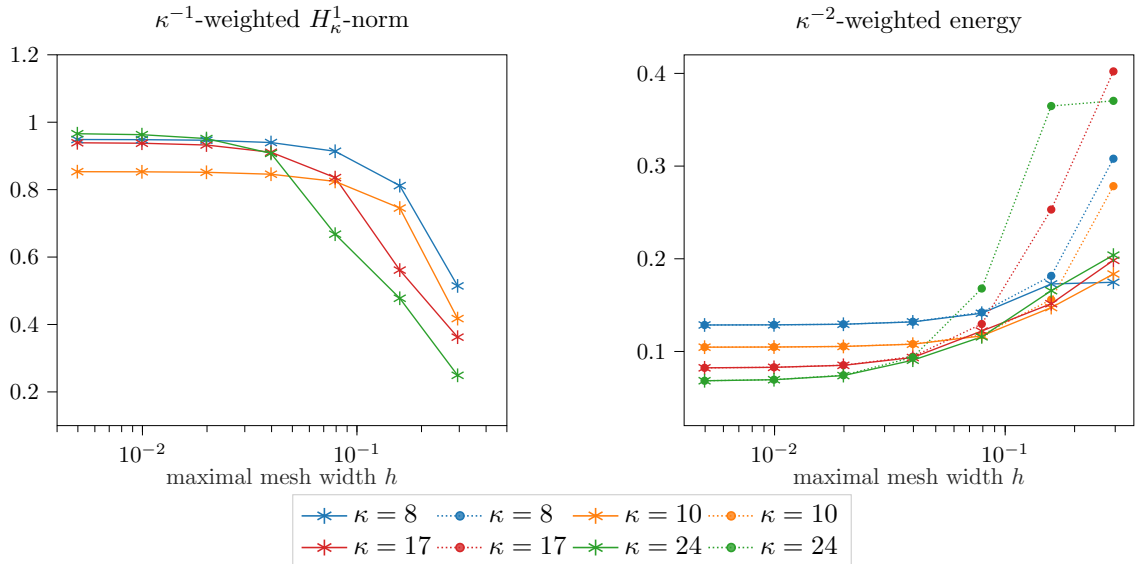

Let us also note that for larger values of $\kappa$, we observe a preasymptotic behavior in $\h$.
We expect that this is related to the smallness condition for $\kappa \Csol \h$ stated in the theorem, which is required below in Lemma~\ref{lem:du_F_inf_sup_solvability_discrete} for the discrete inf-sup stability. Since beyond the (numerically observed) threshold $\kappa h < 1$, the errors coincide for all values of $\kappa$, this is still in alignment with our theory.

To further investigate the preasymptotic effect, we added a comparison of our errors with the 
best-approximation $\textup{R}_{\kappa,\h}(u)$ in $\VSh$ with respect to 
$\abilmagstabsym{\cdot}{\cdot}$.
To be precise, for our reference solution $\sol$, we computed the orthogonal projection $\textup{R}_{\kappa,\h}(\sol)~\in~\VSh$ such that
\begin{align}
	\label{best-approxi-H1kappa}
	\abilmagstabsym{\sol  - \textup{R}_{\kappa,\h}(\sol)}{\phi_h} = 0, \quad \text{for all} \quad \phi_h \in \VSh .
\end{align}
By comparing our errors $\Honekappa{ u - u_h}$ with $\Honekappa{ u - \textup{R}_{\kappa,\h}(u) }$, we can identify possible numerical pollution effects related to $\kappa$. The corresponding results are depicted in Figure \ref{fig:convergence} and reveal a very interesting phenomenon. Contrary to $\solh$ itself, the best-approximation does not show any visible preasymptotic convergence. On the contrary, in the asymptotic regime both approximations have a very similar behavior and follow the expected rates closely. Since both errors are expected to behave asymptotically as
\begin{align}
	\Honekappa{ \sol - \solh} + \Honekappa{ \sol - \textup{R}_{\kappa,\h}(\sol) } \lesssim \kappa \min \{ 1 , \h \kappa \} 
\end{align}
(due to the energy bounds $\Honekappa{\sol}+\Honekappa{\solh}\lesssim \kappa$), we suspect that the necessary resolution condition $\h \kappa \lesssim 1$ will only become visible for larger values of $\kappa$, whereas the preasymptotic convergence of $\solh$ must be related to the smallness condition $\kappa \Csol \h$. This is further supported by the right-hand side of Figure~\ref{fig:uniformness}, where we compare the energy $\energy(\,\textup{R}_{\kappa,\h}(u) \,)$ of the best-approximation with the energy $\energy(\solh)$ of the actual minimizer in $\VSh$. 
The energies in Figure~\ref{fig:uniformness} on the finest level are given by
\begin{align}
	\kappa^{-2} \energy(\solh) &=  1.29 \cdot 10^{-1}  \text{ for } \kappa = 8, \qquad
	& \kappa^{-2} \energy(\solh) &= 8.24 \cdot 10^{-2}   \text{ for } \kappa = 17, 
	\\
	\kappa^{-2}\energy(\solh) &=  1.05\cdot 10^{-1}  \text{ for } \kappa = 10,  \qquad
	& \kappa^{-2} \energy(\solh) &=  6.84\cdot 10^{-2}  \text{ for } \kappa = 24 .
\end{align}
We can see that, in the preasymptotic regime, the energy of $\textup{R}_{\kappa,\h}(u)$ is indeed significantly larger than the one of $\solh$.
This explains why good approximations are not found on coarse meshes and indicates that one cannot capture the correct vortex pattern on meshes which do not satisfy some further resolution condition on $\h$ with respect to $\kappa$.

In our second experiment, we first computed for $\kappa = 20$
the discrete minimizers for different values of 
$h \approx 8\cdot 10^{-2}, 4\cdot 10^{-2}, 2\cdot 10^{-2}, 1 \cdot 10^{-2} $ ,
see Figure~\ref{fig:diff_solutions_in_h}.
We observe that the number of vortices remains constant on the different discretization levels, but 
the minimizer is rotated by $\frac{\pi}{2}$. A simple calculation show that by our choice of $\MagF$ this rotation of the coordinate system leaves the energy invariant. In particular, this illustrates that the density $|u|^2$ of minimizers is not necessarily unique and that convergence of discrete minimizers can only be expected up to a subsequence, even for a fixed gauge condition.
On the other hand, we plotted the minimizers for the values $\kappa=8,10,17,24$, see Figure~\ref{fig:diff_solutions}.
We observe that the number of vortices increases with larger values of $\kappa$, which is in agreement with analytical results \cite{Aftalion99,SaS07}. 

\begin{figure}[t!]
	\begin{subfigure}{0.2\textwidth}
		\includegraphics[width=1.4\textwidth]{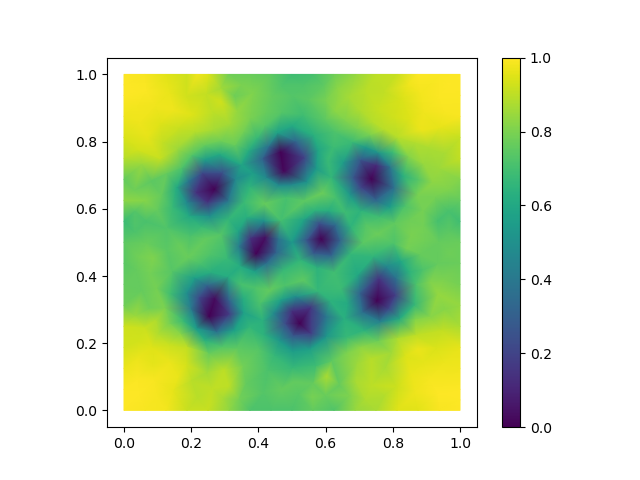}		
	\end{subfigure}
	\hfil
	\begin{subfigure}{0.2\textwidth}
		\includegraphics[width=1.4\textwidth]{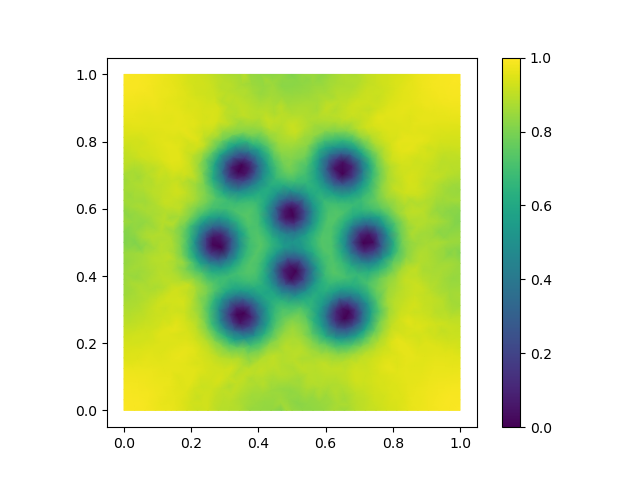}	
	\end{subfigure}
	\hfil
	\begin{subfigure}{0.2\textwidth}
		\includegraphics[width=1.4\textwidth]{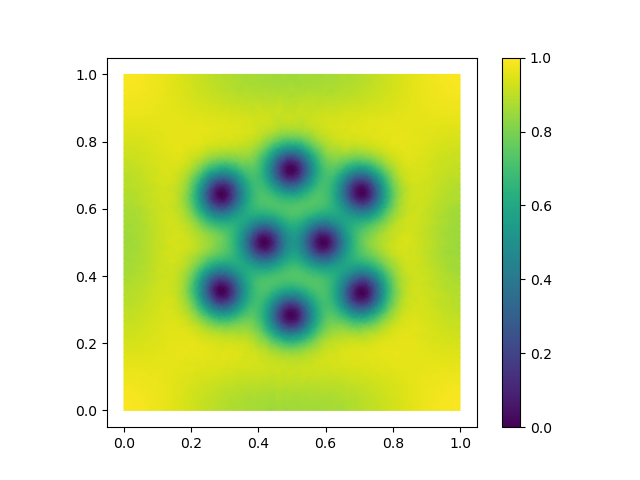}	
	\end{subfigure}
	\hfil
	\begin{subfigure}{0.2\textwidth}
		\includegraphics[width=1.4\textwidth]{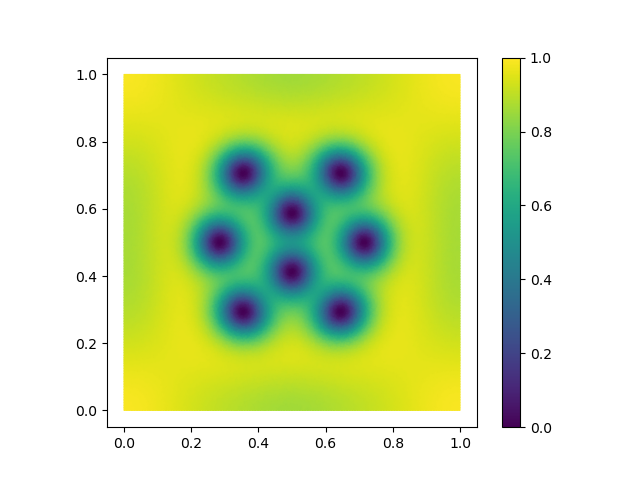}
	\end{subfigure}
	
	\caption{Minimizers for the Ginzburg--Landau parameter $\kappa =20$ and different mesh widths 
		$h \approx 8\cdot 10^{-2}, 4\cdot 10^{-2}, 2\cdot 10^{-2}, 1 \cdot 10^{-2} $ 
		(from left to right).}
	\label{fig:diff_solutions_in_h}
\end{figure}

\begin{figure}[t!]
	\begin{subfigure}{0.2\textwidth}
	\includegraphics[width=1.4\textwidth]{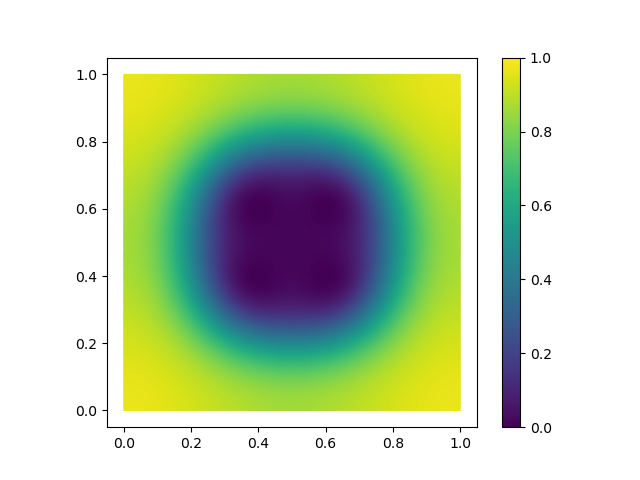}	
	\end{subfigure}
	\hfil
	\begin{subfigure}{0.2\textwidth}
		\includegraphics[width=1.4\textwidth]{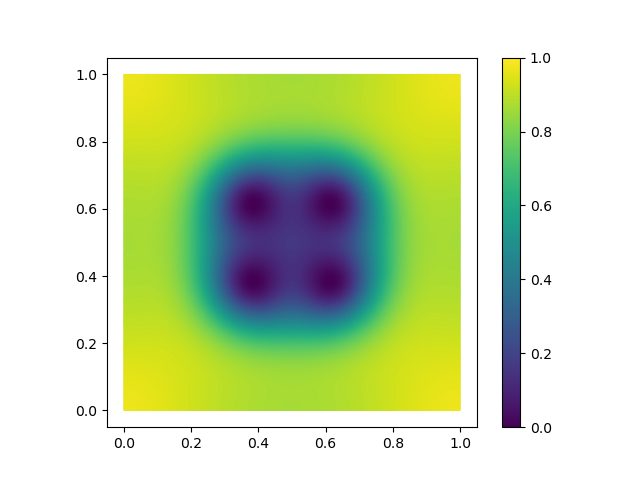}
	\end{subfigure}
	\hfil
	\begin{subfigure}{0.2\textwidth}
		\includegraphics[width=1.4\textwidth]{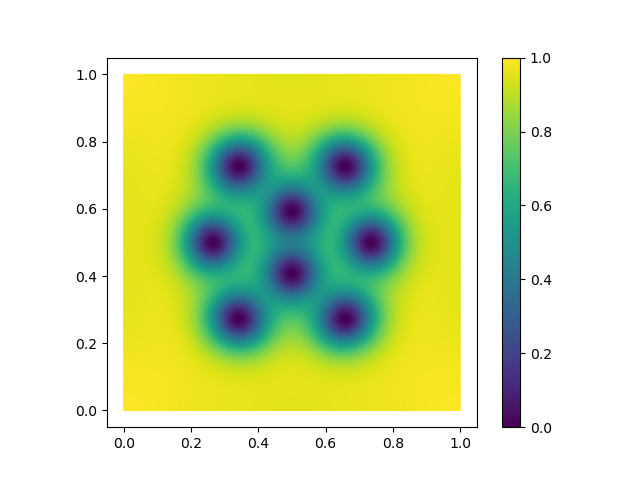}	
		\end{subfigure}
		\hfil
	\begin{subfigure}{0.2\textwidth}
		\includegraphics[width=1.4\textwidth]{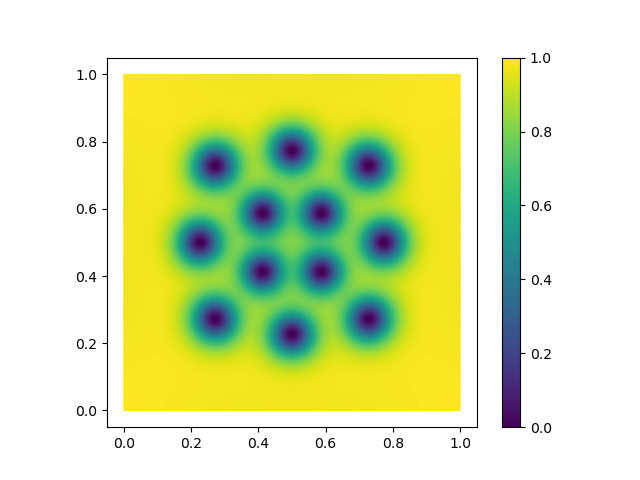}
	\end{subfigure}
	\caption{Different minimizers corresponding to the Ginzburg--Landau parameters $\kappa = 8,10,17,24$ (from left to right) for $\h \approx 2.5 \cdot 10^{-3}$.
	}
	\label{fig:diff_solutions}
\end{figure}

\section{Proof of the main result}
\label{sec:proofs}

In this section, we provide the proof of our main results 
Theorem~\ref{thm:main} and Corollary \ref{cor:main_Lagrange}. We first show an abstract convergence result in order to identify possible limits of a sequence of discrete minimizers. Those are then used to establish convergence with rates, if we are sufficiently close to a continuous minimizer. 	
Throughout this section, we let Assumptions~\ref{ass:cinfsup} and \ref{ass:FEM_space} hold.

\subsection{Abstract convergence result}

In order to deduce convergence, we first establish bounds on minimizers in the discrete space $\VSh$ which are independent of the spatial parameter $\h$ as formulated in Lemma~\ref{lem:bound_general_uH}.

\begin{proof}[Proof of Lemma~\ref{lem:bound_general_uH}]
	First note that for all $\h>0$ we have $0 \in \VSh$, and thus by the minimizing property, we conclude the bound on the energy
	\begin{equation}
		\energy(\solh) \leq \energy(0) 
		\leq \frac{\kappa^2 }{2} \vol(\Omega).
	\end{equation}
	This gives on the one hand
	\begin{equation}
		\norm{L^2}{\nabla \solh + \ci \kappa \MagF \solh } \leq \energy(\solh)^{1/2} \leq \kappa  \vol(\Omega)^{1/2},
	\end{equation}
	and on the other hand we estimate
	\begin{align}
		\frac{\kappa^2}{2} \norm{L^2}{1 - |\solh| \, }^2 \leq 
		\frac{\kappa^2}{2} \int_\Omega   \bigl( 1- |\solh| \bigr)^2 \bigl( 1+  |\solh| \bigr)^2 \,dx \leq  
		\energy (0) = \frac{\kappa^2}{2} \vol(\Omega)^{1/2}  ,
	\end{align}
	and thus conclude
	\begin{equation}
		\norm{L^2}{ \solh} \leq \norm{L^2}{1 - |\solh| } + \vol(\Omega)^{1/2} \leq 2 \vol(\Omega)^{1/2}.
	\end{equation}
	Combining the estimates above, the bound on $\norm{L^2}{\nabla \solh}$ directly follows.
\end{proof}

With the uniform estimates on the discrete minimizers, 
following the approach in \cite{CheGZ10},
we employ the Banach--Alaoglu theorem to obtain some limit
which is an exact minimizer and by Assumption~\ref{ass:cinfsup} locally unique up to complex rotation.

\begin{proposition} \label{prop:abstract_convergence}
	Denote by $(\solh)_{\h>0}$ a family of minimizers of \eqref{eq:energy_functional_discrete}.
	Then, there exists a minimizer $\sol_0$
	of \eqref{eq:energy_functional_times_kappa2} such that there is a monotonically decreasing sequence 
	$( \h_{n} )_{n \in \mathbb{N}}$ with
	\begin{equation}
		\lim_{n\to \infty}
		\Honekappa{ \sol_0 - \solhn{n} } = 0. 
	\end{equation}
	In particular, we can define the twisted approximations
	\begin{equation}
		\solhnTwist{n} \coloneqq e^{\ci \phi_n} \solhn{n}
		\qquad
		\mbox{where } \phi_n \in [-\tfrac{\pi}{2},\tfrac{\pi}{2}] \mbox{ is chosen such that }
		\ipsymLtwo{ \solhnTwist{n} }{\ci \sol_0} = 0 \,,
	\end{equation}
	which 
	also converge in $H^1$, i.e.,  
	\begin{equation}
		\lim_{n\to \infty}
		\Honekappa{ \solhnTwist{n}  - \sol_0} = 0.
	\end{equation}
	Conversely, for any $n$, the minimizer $\solhn{n}$ is an approximation to $e^{ - \ci \phi_n} \sol_0$.
\end{proposition}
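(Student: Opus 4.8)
The plan is to run the direct method of the calculus of variations along a sequence $\h_n \to 0$, exploiting the $\h$-uniform bounds of Lemma~\ref{lem:bound_general_uH}, the density Assumption~\ref{ass:FEM_space}(a), and the compact Sobolev embedding. First I would use $\Honekappa{\solh} \lesssim \kappa$ from Lemma~\ref{lem:bound_general_uH} to conclude that, for the fixed value of $\kappa$, the family $(\solh)_{\h>0}$ is bounded in $H^1(\Omega)$. By reflexivity and the Banach--Alaoglu theorem I can then extract a monotonically decreasing null sequence $(\h_n)_{n}$ and a limit $\sol_0 \in H^1$ with $\solhn{n} \rightharpoonup \sol_0$ weakly in $H^1$. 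Since $\Omega$ is a bounded Lipschitz domain, the Rellich--Kondrachov theorem upgrades this, along the same subsequence, to strong convergence $\solhn{n} \to \sol_0$ in $L^2$ and in $L^4$.

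Next I would identify $\sol_0$ as a global minimizer. Because $\MagF \in L^\infty$ and $\solhn{n}\to\sol_0$ in $L^2$, the term $\ci\kappa\MagF\solhn{n}$ converges strongly in $L^2$, so $\nabla\solhn{n}+\ci\kappa\MagF\solhn{n} \rightharpoonup \nabla\sol_0+\ci\kappa\MagF\sol_0$ weakly in $L^2$ and the quadratic part of the energy is weakly lower semicontinuous; the strong $L^4$-convergence makes the nonlinear part converge, so $\energy(\sol_0) \le \liminf_n \energy(\solhn{n})$. For the matching upper bound I would fix an arbitrary $\testfun \in H^1$, use Assumption~\ref{ass:FEM_space}(a) to choose $\testfun_n \in V_{\h_n}$ with $\testfun_n \to \testfun$ in $H^1$, and combine the continuity of $\energy$ with the discrete minimality $\energy(\solhn{n}) \le \energy(\testfun_n)$ to get $\energy(\sol_0) \le \liminf_n \energy(\solhn{n}) \le \lim_n \energy(\testfun_n) = \energy(\testfun)$. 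As $\testfun$ is arbitrary, $\sol_0$ minimizes $\energy$ over $H^1$; taking $\testfun = \sol_0$ also gives $\limsup_n \energy(\solhn{n}) \le \energy(\sol_0)$, hence $\energy(\solhn{n}) \to \energy(\sol_0)$.

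The technical heart, and the step I expect to be the main obstacle, is upgrading weak to strong convergence in $\HonekappaSpace$. Here I would use the convergence of the energies together with the strong $L^4$-convergence of the nonlinear term to deduce that the quadratic part must converge, i.e.\ $\norm{L^2}{\nabla\solhn{n}+\ci\kappa\MagF\solhn{n}} \to \norm{L^2}{\nabla\sol_0+\ci\kappa\MagF\sol_0}$. Since we already have weak $L^2$-convergence of $\nabla\solhn{n}+\ci\kappa\MagF\solhn{n}$, convergence of the norms promotes this to strong $L^2$-convergence; subtracting the strongly convergent lower-order term $\ci\kappa\MagF\solhn{n}$ then yields $\nabla\solhn{n}\to\nabla\sol_0$ in $L^2$, and thus $\Honekappa{\solhn{n}-\sol_0}\to 0$. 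The care needed in separating the covariant gradient from its lower-order part, and in converting energy convergence into norm convergence of that covariant gradient, is what makes this the delicate point.

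Finally I would construct the twist. Assuming the identified minimizer satisfies $\sol_0 \ne 0$ (as required for the gauge framework), the relation $\ipsymLtwo{\sol_0}{\ci\sol_0}=0$ holds and $\int_\Omega\solhn{n}\,\overline{\ci\sol_0}\,dx \to -\ci\norm{L^2}{\sol_0}^2$, a nonzero purely imaginary number. Writing the desired orthogonality $\ipsymLtwo{e^{\ci\phi}\solhn{n}}{\ci\sol_0} = \Real\bigl[e^{\ci\phi}\int_\Omega\solhn{n}\,\overline{\ci\sol_0}\,dx\bigr]=0$ as the vanishing of a cosine, I can solve for $\phi_n \in [-\tfrac{\pi}{2},\tfrac{\pi}{2}]$, and the limit above forces $\phi_n \to 0$. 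Then $\Honekappa{\solhnTwist{n}-\sol_0} \le |e^{\ci\phi_n}-1|\,\Honekappa{\solhn{n}} + \Honekappa{\solhn{n}-\sol_0} \to 0$ by $\Honekappa{\solhn{n}}\lesssim\kappa$ and $\phi_n\to 0$; the converse statement about $e^{-\ci\phi_n}\sol_0$ follows immediately by rearranging.
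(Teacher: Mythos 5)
Your proposal is correct in its main substance and follows essentially the same route as the paper. The paper disposes of the subsequence convergence by citing \cite{CheGZ10} ``along the lines of'', and what that citation hides is exactly the direct-method argument you spell out: uniform $\Honekappa{\cdot}$-bounds from Lemma~\ref{lem:bound_general_uH}, weak compactness, Rellich--Kondrachov (note $H^1\hookrightarrow\hookrightarrow L^4$ indeed holds for $d=2,3$), lower semicontinuity plus the density Assumption~\ref{ass:FEM_space}(a) to identify $\sol_0$ as a global minimizer, and convergence of the energies to upgrade weak to strong $\HonekappaSpace$-convergence of the covariant gradient. Your twist construction is also the paper's construction; the only cosmetic difference is how $\phi_n \to 0$ is derived: you solve $a_n\cos\phi_n = b_n\sin\phi_n$ with $a_n \to 0$, $b_n \to -\norm{L^2}{\sol_0}^2$, whereas the paper uses the identity
\begin{equation}
	\sin \phi_n \, \ipsymLtwo{\solhn{n}}{\solhn{n}} = \ipsymLtwo{e^{\ci\phi_n}\solhn{n}}{\ci\solhn{n} - \ci\sol_0} \to 0 .
\end{equation}
Both are valid.

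The one genuine gap is the case $\sol_0 = 0$, which you assume away with the remark that $\sol_0 \neq 0$ is ``required for the gauge framework''. It is not: nothing in the proposition or in the compactness argument excludes that the limit is the normal state $\sol_0 = 0$ (for a given $\MagF$ and $\kappa$ the zero function can be the global minimizer, and the statement must still be proved then). Your argument for the existence of $\phi_n$ and for $\phi_n\to 0$ breaks down in that case because the limit $-\ci\norm{L^2}{\sol_0}^2$ of the complex integrals vanishes. The repair is immediate, which is why the paper phrases its conclusion as ``either $\sol_0=0$ or $\phi_n\to0$ holds; in any case\dots'': if $\sol_0=0$, the orthogonality $\ipsymLtwo{\solhnTwist{n}}{\ci\sol_0}=0$ holds for every choice of $\phi_n$, and
\begin{equation}
	\Honekappa{ e^{\ci\phi_n}\solhn{n} - \sol_0 } = \Honekappa{ \solhn{n} } = \Honekappa{ \solhn{n} - \sol_0 } \to 0
\end{equation}
regardless of $\phi_n$, since multiplication by a unimodular constant preserves the $\HonekappaSpace$-norm. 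You should add this case distinction (or, as the paper does, run the triangle-inequality estimate $\Honekappa{\solhnTwist{n}-\sol_0} \leq \Honekappa{\solhn{n}-\sol_0} + |1-e^{\ci\phi_n}|\,\Honekappa{\solhn{n}}$, which covers both cases at once) rather than restrict to $\sol_0 \neq 0$.
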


\begin{remark}
	The assertion of Proposition~\ref{prop:abstract_convergence} can be interpreted as follows. Assume that there exists a (sub-)sequence of discrete minimizers that keeps a positive distance to all
	exact minimizers, then this would be a contradiction to Proposition~\ref{prop:abstract_convergence}.
	Hence, for $\h$ sufficiently small, one always arrives at a neighborhood of some minimizer $\sol_0$, which is precisely the claim in Theorem~\ref{thm:main}.
\end{remark}

\begin{proof}[Proof of Proposition~\ref{prop:abstract_convergence}]
	The proof of convergence of a subsequence is along the lines of \cite{CheGZ10} if one takes into account the bounds provided in Lemma~\ref{lem:bound_general_uH} together with the weak lower semi-continuity of $\energy$, see Theorem~\ref{thm:cont_minimizer},
	and Assumption~\ref{ass:FEM_space}.

	For the twisted approximations, we note that we can find some $\phi_n \in [-\frac{\pi}{2},\frac{\pi}{2}]$ such that real part of the inner product with $\ci \sol_0$ vanishes if $n$ is large enough.
	Thus, we obtain by the choice of $\phi_n$
	\begin{align}
		\sin \phi_n  \ \ipsymLtwo{ \solhn{n}    }{ \solhn{n}   } 
		&=
		\ipsymLtwo{ e^{\ci \phi_n} \solhn{n}    }{  \ci \solhn{n}   }  
		=
		\ipsymLtwo{ e^{\ci \phi_n} \solhn{n}    }{  \ci \solhn{n} - \ci \sol_0  }  .
	\end{align}
	Since the right-hand side tends to zero, either $\sol_0 = 0$ or
	$\phi_n \to 0$ holds.
	In any case, we have
	\begin{equation}
		\Honekappa{  \solhnTwist{n} - \sol_0} \leq 	\Honekappa{\solhn{n}  - \sol_0} + 	
		|1 - e^{ \ci \phi_n} | \,
		\Honekappa{ \solhn{n}} \to 0 ,
	\end{equation}
	which yields the assertion.
\end{proof}

\subsection{Discrete inf-sup stability}

In order to derive the error estimates, we first establish a discrete version of the inf-sup condition in \eqref{eq:inf_sup_condition_cont}. 
In the proof, we need the following consequence of Assumption~\ref{ass:FEM_space}.

\begin{corollary}
	Let Assumption~\ref{ass:FEM_space} hold, and let $\testfunFOUR \in \Honeperp$ be the solution of
	\begin{equation}
		\dualp{\energy''(\sol) \testfunFOUR}{\testfun}%
		=
		\dualp{f}{\testfun} ,
		\quad \text{ for all } \testfun\in \Honeperp.
	\end{equation}
	Then, it holds the estimate
	\begin{equation} \label{eq:Honekappa_bound_alternative}
		\Honekappa{\testfunFOUR - \projLtwoisol \testfunFOUR} \lesssim   \Csol \h \norm{L^2}{f} .
	\end{equation}
\end{corollary}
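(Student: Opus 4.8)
The plan is to recast the variational problem for $\testfunFOUR$ as a problem governed by the stabilized form $\abilmagstabsym{\cdot}{\cdot}$ and then invoke Assumption~\ref{ass:FEM_space}(b) directly. This is the same reformulation already used inside the proof of Corollary~\ref{cor:du_F_inf_sup_solvability}(b), so the argument mostly consists in reassembling those observations.

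First I would use the representation of $\energy''(\sol)$ from Lemma~\ref{lem:Frechet_functional}(a) together with the definition $\abilmagstabsym{\cdot}{\cdot} = \abilmag{\cdot}{\cdot} + \stabPar^2\ipsymLtwo{\cdot}{\cdot}$ to isolate $\abilmag{\testfunFOUR}{\testfun}$. For every $\testfun\in\Honeperp$ this gives
\begin{equation}
	\abilmagstabsym{\testfunFOUR}{\testfun} = \dualp{\energy''(\sol)\testfunFOUR}{\testfun} + \stabPar^2\ipsymLtwo{\testfunFOUR}{\testfun} - \kappa^2\Real\int_\Omega \bigl( (|\sol|^2-1)\testfunFOUR + |\sol|^2\testfunFOUR + \sol^2\testfunFOUR^* \bigr)\testfun^*\,dx.
\end{equation}
Since $f\in L^2$ we have $\dualp{\energy''(\sol)\testfunFOUR}{\testfun} = \dualp{f}{\testfun} = \ipsymLtwo{f}{\testfun}$, so the whole right-hand side is a bounded real-linear functional acting on $\testfun$ through $L^2$-pairings; by Riesz representation there is $\widetilde f\in L^2$ with $\abilmagstabsym{\testfunFOUR}{\testfun} = \ipsymLtwo{\widetilde f}{\testfun}$ for all $\testfun\in\Honeperp$.

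Next I would estimate $\norm{L^2}{\widetilde f}$. Each added term is an $L^2$-pairing whose coefficient is bounded by $\kappa^2$, using $|\sol|\le1$ from Theorem~\ref{thm:cont_minimizer} and $\stabPar^2 = \kappa^2(\MagFinfty^2+1)$, so that $\norm{L^2}{\widetilde f} \lesssim \norm{L^2}{f} + \kappa^2\norm{L^2}{\testfunFOUR}$. The a priori bound $\Honekappa{\testfunFOUR}\le \tfrac{\Csol}{\kappa}\norm{L^2}{f}$ from Corollary~\ref{cor:du_F_inf_sup_solvability}(b), combined with $\kappa\norm{L^2}{\testfunFOUR}\le\Honekappa{\testfunFOUR}$, yields $\kappa^2\norm{L^2}{\testfunFOUR}\le\Csol\norm{L^2}{f}$, whence $\norm{L^2}{\widetilde f}\lesssim\Csol\norm{L^2}{f}$ (recalling $\Csol\gtrsim1$).

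Finally, since $\testfunFOUR\in\Honeperp$ now solves $\abilmagstabsym{\testfunFOUR}{\testfun} = \ipsymLtwo{\widetilde f}{\testfun}$ for all $\testfun\in\Honeperp$ with $\widetilde f\in L^2$, Assumption~\ref{ass:FEM_space}(b) applies verbatim and gives $\Honekappa{\testfunFOUR - \projLtwoisol\testfunFOUR}\lesssim \h\,\norm{L^2}{\widetilde f}\lesssim\Csol\,\h\,\norm{L^2}{f}$, which is the assertion. The only mild obstacle is the bookkeeping of the antilinear contribution $\sol^2\testfunFOUR^*$ and of the outer real part when producing a genuine $L^2$ representative $\widetilde f$; this is harmless in the real-Hilbert-space setting, since $\testfun\mapsto\Real\int_\Omega\sol^2\testfunFOUR^*\testfun^*\,dx$ is a bounded real-linear functional of $L^2$-norm $\lesssim\kappa^2\norm{L^2}{\testfunFOUR}$. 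The genuinely new ingredient compared to Corollary~\ref{cor:du_F_inf_sup_solvability} is simply that one invokes the approximation property of Assumption~\ref{ass:FEM_space}(b) in place of the $H^2$-regularity used in Lemma~\ref{lem:wepo_abilmagstab}.
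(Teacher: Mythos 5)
Your proposal is correct and follows essentially the same route as the paper's own proof: rewrite the $\energy''(\sol)$-problem as a stabilized problem $\abilmagstabsym{\testfunFOUR}{\testfun}=\ipsymLtwo{\widetilde f}{\testfun}$ on $\Honeperp$, bound $\norm{L^2}{\widetilde f}\lesssim \norm{L^2}{f}+\kappa^2\norm{L^2}{\testfunFOUR}\lesssim \Csol\norm{L^2}{f}$ via Corollary~\ref{cor:du_F_inf_sup_solvability}(b) and $|\sol|\le 1$, and then invoke Assumption~\ref{ass:FEM_space}(b). The only cosmetic difference is that you package the data into a single $L^2$ representative $\widetilde f$ (noting that the antilinear term $\Real\int_\Omega \sol^2\testfunFOUR^*\testfun^*\,dx = \ipsymLtwo{\sol^2\testfunFOUR^*}{\testfun}$ is itself an $L^2$-pairing), whereas the paper writes the right-hand side as $\ipsymLtwo{f}{\testfun}-\ipsymLtwo{f_\testfunFOUR}{\testfun}$ with an explicit $f_\testfunFOUR$; these are the same argument.
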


\begin{proof}
	According to the representation of $\energy''$ in Lemma~\ref{lem:Frechet_functional}, we bring the terms depending on $\sol$ to the right-hand side
	and for
	$f_\testfunFOUR = \kappa^2 \bigl(  ( |\sol|^2 -1  ) \testfunFOUR + \sol^2 \testfunFOUR^* + |\sol|^2 \testfunFOUR \bigr) -
	\beta  \testfunFOUR $
	we obtain
	\begin{equation}
		\abilmagstabsym{\testfunFOUR}{\testfun} = \ipsymLtwo{f}{\testfun} - \ipsymLtwo{f_\testfunFOUR}{\testfun}  	\quad \text{ for all } \testfun\in \Honeperp.
	\end{equation}
	Here, $f_\testfunFOUR$ satisfies
	\begin{equation}
		\norm{L^2}{f_\testfunFOUR} \lesssim \kappa^2 \norm{L^2}{z} \lesssim \Csol \norm{L^2}{f},
	\end{equation}
	where we used part (b) in Corollary~\ref{cor:du_F_inf_sup_solvability}, and
	the approximation \eqref{eq:ass_bound_H1kappa} in Assumption~\ref{ass:FEM_space}
	gives the claim.
\end{proof}

The proof of the next lemma, which states the discrete inf-sup stability,
is inspired by the thesis \cite[Prop.~8.2.7]{Melenk_Diss95}, where this was done for the Helmholtz equation.

\begin{lemma} \label{lem:du_F_inf_sup_solvability_discrete}
	Let Assumption \ref{ass:cinfsup} be fulfilled.
	\begin{itemize}
	\item[{\normalfont (a)}] If $\kappa \Csol \h$ is sufficiently small, it holds
	\noindent
	for all $\testfunTHREE_\h \in \VShperp$
	\begin{equation}
		\Honekappa{\testfunTHREE_\h} \lesssim \, \Csol  \sup_{\testfunh \in \VShperp} \frac{\dualp{\energy''(\sol)  \testfunTHREE_\h}{\testfunh} } {\Honekappa{\testfunh}},
	\end{equation}
	where the constant is  independent of $\h$ and $\kappa$.
        \item[{\normalfont (a)}] For any $f \in 	(\Honeperp)'$, 
	there is a unique $\testfunTHREE_\h \in \VShperp$ such that
	\begin{equation}
		\dualp{\energy''(\sol)  \testfunTHREE_\h}{\testfunh}%
		=
		\dualp{f}{\testfunh} ,
		\quad \text{ for all } \testfunh \in \VShperp
	\end{equation}
	and it holds
	\begin{equation}
		\Honekappa{\testfunTHREE_\h} \lesssim \Csol \Honekappaminus{f} .
	\end{equation}
	\end{itemize}
\end{lemma}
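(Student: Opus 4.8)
The plan is to prove the inf-sup estimate in the first item first, and then obtain the solvability statement in the second item as a consequence. For the latter, since $\VShperp$ is finite-dimensional and $\dualp{\energy''(\sol)\cdot}{\cdot}$ is a square bilinear form on it, the inf-sup bound forces the associated operator to be injective, hence bijective, so the discrete problem has a unique solution $\testfunTHREE_\h$; testing the defining identity against the discrete function that (nearly) attains the supremum and using $\dualp{f}{\testfunh}\le\Honekappaminus{f}\Honekappa{\testfunh}$ for $\testfunh\in\VShperp\subset\Honeperp$ then yields $\Honekappa{\testfunTHREE_\h}\lesssim\Csol\Honekappaminus{f}$. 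So the whole argument reduces to the inf-sup estimate, which I would establish by a Schatz-type duality argument in the spirit of the thesis \cite{Melenk_Diss95}.

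Fix $\testfunTHREE_\h\in\VShperp$ and abbreviate $S\coloneqq\sup_{\testfunh\in\VShperp}\dualp{\energy''(\sol)\testfunTHREE_\h}{\testfunh}/\Honekappa{\testfunh}$. Using the representation in Lemma~\ref{lem:Frechet_functional}(a), I rewrite $\dualp{\energy''(\sol)\cdot}{\cdot}$ as the coercive stabilized form $\abilmagstabsym{\cdot}{\cdot}$ plus a zeroth-order remainder which, thanks to $|\sol|\le1$ and the choice $\stabPar^2=\kappa^2(\MagFinfty^2+1)$, is controlled by $C\kappa^2\norm{L^2}{\cdot}^2$. Coercivity (Lemma~\ref{lem:prop_bil}) then gives
\[
\Ccoer\Honekappa{\testfunTHREE_\h}^2 \le \abilmagstabsym{\testfunTHREE_\h}{\testfunTHREE_\h} \le \dualp{\energy''(\sol)\testfunTHREE_\h}{\testfunTHREE_\h} + C\kappa^2\norm{L^2}{\testfunTHREE_\h}^2 \le S\,\Honekappa{\testfunTHREE_\h} + C\kappa^2\norm{L^2}{\testfunTHREE_\h}^2 ,
\]
so the task collapses to controlling the $L^2$-term $\kappa^2\norm{L^2}{\testfunTHREE_\h}^2$.

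The crux is the duality step for that term. I introduce the adjoint solution $\testfunFOUR\in\Honeperp$ of $\dualp{\energy''(\sol)\testfunFOUR}{\testfun}=\kappa^2\ipsymLtwo{\testfunTHREE_\h}{\testfun}$ for all $\testfun\in\Honeperp$, which exists by Corollary~\ref{cor:du_F_inf_sup_solvability}(b) with right-hand side $\kappa^2\testfunTHREE_\h\in L^2$ and obeys $\Honekappa{\testfunFOUR}\lesssim\Csol\kappa\norm{L^2}{\testfunTHREE_\h}$. By symmetry of $\energy''(\sol)$ (Lemma~\ref{lem:Frechet_functional}(b)) I write $\kappa^2\norm{L^2}{\testfunTHREE_\h}^2=\dualp{\energy''(\sol)\testfunTHREE_\h}{\testfunFOUR}$, insert the projection $\projLtwoisol\testfunFOUR\in\VShperp$, and split into $\dualp{\energy''(\sol)\testfunTHREE_\h}{\projLtwoisol\testfunFOUR}\le S\,\Honekappa{\projLtwoisol\testfunFOUR}$ and a consistency error $\dualp{\energy''(\sol)\testfunTHREE_\h}{\testfunFOUR-\projLtwoisol\testfunFOUR}$. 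Stability of the $\abilmagstabsym{\cdot}{\cdot}$-projection bounds the former by $S\,\Csol\kappa\norm{L^2}{\testfunTHREE_\h}$, while boundedness of $\energy''(\sol)$ together with the approximation estimate \eqref{eq:Honekappa_bound_alternative} (applied to $\testfunFOUR$ with data $\kappa^2\testfunTHREE_\h$) bounds the latter by $C\Csol\h\kappa^2\Honekappa{\testfunTHREE_\h}\norm{L^2}{\testfunTHREE_\h}$. Dividing by $\kappa\norm{L^2}{\testfunTHREE_\h}$ yields $\kappa\norm{L^2}{\testfunTHREE_\h}\lesssim\Csol S+\Csol\kappa\h\,\Honekappa{\testfunTHREE_\h}$.

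Finally I would square this, feed $\kappa^2\norm{L^2}{\testfunTHREE_\h}^2\lesssim\Csol^2S^2+\Csol^2\kappa^2\h^2\Honekappa{\testfunTHREE_\h}^2$ back into the coercivity inequality, and \emph{absorb} the term $\Csol^2\kappa^2\h^2\Honekappa{\testfunTHREE_\h}^2$ into the left-hand side. This absorption is exactly where the smallness of $\kappa\Csol\h$ enters, and I expect it to be the main obstacle: the whole point is to keep the constant in the consistency error explicit in $\kappa$ and $\Csol$, so that the resulting resolution condition is sharp and matches the preasymptotic threshold seen numerically. What is left is a scalar quadratic inequality $\tfrac{\Ccoer}{2}\Honekappa{\testfunTHREE_\h}^2\lesssim S\,\Honekappa{\testfunTHREE_\h}+\Csol^2S^2$, whose positive root gives, using $\Csol\gtrsim1$, the claimed bound $\Honekappa{\testfunTHREE_\h}\lesssim\Csol S$.
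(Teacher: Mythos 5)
Your proposal is correct and follows essentially the same route as the paper: a Schatz/Melenk-type duality argument that uses Corollary~\ref{cor:du_F_inf_sup_solvability}, the projection estimate \eqref{eq:Honekappa_bound_alternative}, boundedness/symmetry of $\energy''(\sol)$, and absorption under the same smallness condition on $\kappa \Csol \h$, with part (b) reduced to part (a) via finite-dimensional injectivity-implies-bijectivity exactly as in the paper. The only difference is bookkeeping: the paper picks the dual datum $(\stabPar^2 + 2\kappa^2)\testfunTHREE_\h$ so that testing with $\testfunTHREE_\h + \testfunFOUR$ is directly coercive and a single linear absorption closes the argument, whereas you control $\kappa^2\norm{L^2}{\testfunTHREE_\h}^2$ by a separate duality with datum $\kappa^2\testfunTHREE_\h$ and close via a quadratic inequality in $\Honekappa{\testfunTHREE_\h}$ — both yield the same constants and the same resolution condition.
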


\begin{proof}
	Part \bulletpoint{b} is a classical stability bound for inf-sup stable problems, cf. \cite[Thm.~2.1]{Bab7071}. Hence, claim \bulletpoint{b} directly follows once we have shown \bulletpoint{a}. To do so,
	we fix $\testfunTHREE_\h \in \VShperp$ and observe for arbitrary $\testfunFOUR \in \Honeperp$
	\begin{equation} 	\label{proof:lem5.3:step1}
	\begin{aligned}
		&\dualp{\energy''(\sol)  \testfunTHREE_\h}{\testfunTHREE_\h + \testfunFOUR } 
		\\
		&\quad = \abilmag{\testfunTHREE_\h}{\testfunTHREE_\h}
		+
		\kappa^2 \Real \int_\Omega \bigl( 2 |\sol|^2 -1 \bigr)  \testfunTHREE_\h \testfunTHREE_\h^*  + u^2 \testfunTHREE_\h^* \testfunTHREE_\h^*  
		\,dx
		+
		\dualp{\energy''(\sol)  \testfunFOUR}{\testfunTHREE_\h}  .
	\end{aligned}
	\end{equation}
	Let $\testfunFOUR \in \Honeperp$ be the unique solution to
	\begin{align}
		\dualp{\energy''(\sol)  \testfunFOUR}{\testfun}   = 	\ipsymLtwo{f}{\testfun} \quad \text{ for all } \testfun \in \Honeperp
		\qquad \text{with} \quad  f = ( \stabPar^2 + 2 \kappa^2 ) \testfunTHREE_\h, 
	\end{align}
	which exists by Proposition~\ref{proposition-inf-sup-stability} and Corollary \ref{cor:du_F_inf_sup_solvability}, and insert it into \eqref{proof:lem5.3:step1}.
	Then, we obtain from \eqref{eq:def_abilmagstabsym} together with Lemma~\ref{lem:prop_bil} and Lemma~\ref{lem:Frechet_functional} that
	\begin{align}
		\Honekappa{\testfunTHREE_\h}^2 &\lesssim \dualp{\energy''(\sol)  \testfunTHREE_\h}{\testfunTHREE_\h + \testfunFOUR }
		\lesssim
		\dualp{\energy''(\sol)  \testfunTHREE_\h}{\testfunTHREE_\h +  \projLtwoisol \testfunFOUR }
		+ 
		\Honekappa{\testfunTHREE_\h} \Honekappa{ \projLtwoisol \testfunFOUR - \testfunFOUR }
		\\
		&\lesssim 
		\sup_{\testfunh \in \VShperp} \frac{\dualp{\energy''(\sol)  \testfunTHREE_\h}{\testfunh} } {\Honekappa{\testfunh}} 
		\Honekappa{\testfunTHREE_\h + \projLtwoisol \testfunFOUR}
		+
		\Honekappa{\testfunTHREE_\h} \Honekappa{ \projLtwoisol \testfunFOUR - \testfunFOUR }.
	\end{align}
	It remains to study the terms with $\testfunFOUR$. Here, 
	we establish with Corollary~\ref{cor:du_F_inf_sup_solvability}
	and \eqref{eq:Honekappa_bound_alternative} the bound
	\begin{equation}
		\kappa 	\Honekappa{\testfunFOUR} 
		+
		\h^{-1} 	\Honekappa{\testfunFOUR - \projLtwoisol \testfunFOUR} 
		\lesssim \Csol \norm{L^2}{f} \lesssim \kappa \Csol \Honekappa{\testfunTHREE_\h}.
	\end{equation}
	From this, we finally conclude 
	\begin{align}
		\Honekappa{\testfunTHREE_\h}^2 &\lesssim 
		\sup_{\testfunh \in \VShperp} \frac{\dualp{\energy''(\sol)  \testfunTHREE_\h}{\testfunh} } {\Honekappa{\testfunh}} 
		\Csol \Honekappa{\testfunTHREE_\h}
		+
		\kappa \Csol \h \Honekappa{\testfunTHREE_\h}^2 ,
	\end{align}
	and obtain the assertion \bulletpoint{a} if $\kappa \Csol \h$ is sufficiently small by absorption.
\end{proof}

\subsection{Convergence with rates}

After these preparations, we can derive the error equation employing the second \Frechet derivative $\energy''$. To this end, we strive for a representation of the form
\begin{align} \label{eq:error_equation}
	\dualp{\energy''(\sol)  ( \projLtwoisol   \sol -\solh)}{\testfunh}	 &= 
	\errHmone (\testfunh) ,
\end{align}
for $\testfunh \in \VShperp$ and employ Lemma~\ref{lem:du_F_inf_sup_solvability_discrete} to conclude a bound for $\projLtwoisol \sol -\solh$. The right-hand side $\errHmone$ is studied in the following lemma.

\begin{lemma} \label{lem:rep_error}
	
	Let $\sol$ and $\solh$  be minimizers of \eqref{eq:energy_functional_times_kappa2}
	and \eqref{eq:energy_functional_discrete}, respectively. 
	\begin{itemize}
	\item[{\normalfont (a)}] For $\testfunh \in \VShperp$ it holds the representation \eqref{eq:error_equation}
	where $\errHmone = \errHmonelin + \errHmonenonlin$ and
	\begin{align} \label{eq:rep_error_Lagrange}
			\errHmonelin (\testfunh) 
			&=
			\kappa^2 \Real \int_\Omega 
			\Bigl( 
			(2 |\sol|^2 - 1) ( \projLtwoisol   \sol -\sol) + \sol^2 ( \projLtwoisol   \sol -\sol)^*
		  \Bigr) \testfunh^* \,dx
			\\
			&\qquad - \stabPar^2 \Real \int_\Omega 
			( \projLtwoisol   \sol -\sol)  \testfunh^* \,dx ,
			\\
			\errHmonenonlin (\testfunh )
			&= 2 \kappa^2
			\Real  \int_\Omega |\sol|^2 \sol \testfunh^* 	\,dx
			+
			\kappa^2
			\Real  \int_\Omega |\solh|^2 \solh \testfunh^* 	\,dx
			\\
			&\quad
			 -
			\kappa^2
			\Real  \int_\Omega
			2 \bigl(  |\sol|^2 \solh + \sol^2 \solh^* \bigr)  \testfunh^*  \,dx .
		\end{align} 
	 	\item[{\normalfont (b)}] The error terms are bounded by
		\begin{align}
			\Honekappaminus{\errHmonelin}   &\lesssim
			\kappa \norm{L^2}{\sol -  \projLtwoisol   \sol},
			\\
			\Honekappaminus{\errHmonenonlin} &\lesssim \kappa   \bigl( \norm{L^{4}}{\sol - \solh}^2   
			+ \norm{L^{6}}{\sol - \solh}^3   \bigr)  ,
		\end{align}
		where the constants are independent of $\h$ and $\kappa$.
	\end{itemize}
	\end{lemma}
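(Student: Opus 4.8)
The plan is to derive part (a) from the two Euler--Lagrange identities available to us---namely $\energy'(\sol)=0$ tested against any $\testfunh\in\VShperp\subset H^1$ (via \eqref{eq:first_Frechet}), and $\dualp{\energy'(\solh)}{\testfunh}=0$ for $\testfunh\in\VSh\supset\VShperp$ (since $\solh$ minimizes over $\VSh$)---and then to reduce part (b) to H\"older estimates.

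For part (a), I would start from the explicit representation of $\energy''(\sol)$ in Lemma~\ref{lem:Frechet_functional}(a), splitting $\dualp{\energy''(\sol)(\projLtwoisol\sol-\solh)}{\testfunh}$ into its magnetic bilinear part $\abilmag{\projLtwoisol\sol-\solh}{\testfunh}$ and the zeroth-order part $\kappa^2\Real\int_\Omega\bigl((|\sol|^2-1)w+\sol^2 w^*+|\sol|^2 w\bigr)\testfunh^*\,dx$ with $w=\projLtwoisol\sol-\solh$. The magnetic part is the one that must be rewritten. First I would note $\sol\in\Honeperp$ (indeed $\ipsymLtwo{\ci\sol}{\sol}=\Real\int_\Omega\ci|\sol|^2\,dx=0$), so the defining property of $\projLtwoisol$ gives $\abilmagstabsym{\projLtwoisol\sol}{\testfunh}=\abilmagstabsym{\sol}{\testfunh}$; by \eqref{eq:def_abilmagstabsym} this turns $\abilmag{\projLtwoisol\sol}{\testfunh}$ into $\abilmag{\sol}{\testfunh}-\stabPar^2\ipsymLtwo{\projLtwoisol\sol-\sol}{\testfunh}$, which already produces the $\stabPar^2$-term of $\errHmonelin$. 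Then I would eliminate the two remaining magnetic terms $\abilmag{\sol}{\testfunh}$ and $\abilmag{\solh}{\testfunh}$ through $\energy'(\sol)=0$ and $\dualp{\energy'(\solh)}{\testfunh}=0$, replacing each by the corresponding zeroth-order integral $-\kappa^2\Real\int_\Omega(|\cdot|^2-1)(\cdot)\testfunh^*\,dx$.

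At this stage everything is a $\kappa^2$-weighted (plus one $\stabPar^2$) integral against $\testfunh^*$, and the argument becomes purely algebraic. Writing $p\coloneqq\projLtwoisol\sol-\sol$ and $e\coloneqq\solh-\sol$, the terms linear in $p$ collect (using $(|\sol|^2-1)+|\sol|^2=2|\sol|^2-1$) into exactly $\errHmonelin$, while the remaining contribution is $\kappa^2\Real\int_\Omega\bigl[(|\solh|^2-1)\solh-(|\sol|^2-1)\sol-(|\sol|^2-1)e-\sol^2e^*-|\sol|^2e\bigr]\testfunh^*\,dx$. The key identity is the expansion of the cubic Ginzburg--Landau nonlinearity, obtained by substituting $\solh=\sol+e$ and expanding $|\solh|^2\solh$, namely $(|\solh|^2-1)\solh-(|\sol|^2-1)\sol=(|\sol|^2-1)e+\sol^2e^*+|\sol|^2e+2\sol|e|^2+\sol^*e^2+|e|^2e$; its linear-in-$e$ part cancels the subtracted terms, leaving the quadratic/cubic remainder $2\sol|e|^2+\sol^*e^2+|e|^2e$, which is $\errHmonenonlin$ after re-expressing $e$ through $\sol$ and $\solh$.

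For part (b), the bounds follow from the pointwise bound $|\sol|\le1$ of Theorem~\ref{thm:cont_minimizer}, the scaling $\stabPar^2=\kappa^2(\MagFinfty^2+1)\lesssim\kappa^2$, and the elementary relation $\kappa\norm{L^2}{\testfunh}\le\Honekappa{\testfunh}$. For $\errHmonelin$, Cauchy--Schwarz gives $|\errHmonelin(\testfunh)|\lesssim\kappa^2\norm{L^2}{p}\norm{L^2}{\testfunh}$, and absorbing one power of $\kappa$ into $\Honekappa{\testfunh}$ yields $\kappa\norm{L^2}{\sol-\projLtwoisol\sol}$. For $\errHmonenonlin$, bounding $|2\sol|e|^2+\sol^*e^2+|e|^2e|\le 3|e|^2+|e|^3$ and applying H\"older with exponents $(4,4,2)$ on the quadratic term and $(6,6,6,2)$ on the cubic term gives $|\errHmonenonlin(\testfunh)|\lesssim\kappa^2(\norm{L^4}{e}^2+\norm{L^6}{e}^3)\norm{L^2}{\testfunh}$, and again one factor $\kappa$ is absorbed into $\Honekappa{\testfunh}$. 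The main obstacle is the bookkeeping in part (a): one must combine the magnetic bilinear terms---via the projection identity for $\projLtwoisol$ and the two Euler--Lagrange equations---with the explicit zeroth-order part of $\energy''(\sol)$ so that the cubic expansion collapses precisely into the linear consistency term $\errHmonelin$ (governed by $p$) and the nonlinear defect $\errHmonenonlin$ (governed by $e$), keeping careful track of the complex conjugates throughout; part (b) is then routine.
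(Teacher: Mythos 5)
Your proposal is correct and follows essentially the same route as the paper's proof: the Galerkin orthogonality of $\projLtwoisol$ with respect to $\abilmagstabsym{\cdot}{\cdot}$ produces the $\stabPar^2$-term, the two Euler--Lagrange identities $\energy'(\sol)=0$ and $\dualp{\energy'(\solh)}{\testfunh}=0$ (valid since $\VShperp\subset\VSh$) eliminate the magnetic bilinear forms, the cubic expansion of $|\solh|^2\solh$ around $\sol$ yields the quadratic/cubic remainder, and part (b) follows from $|\sol|\le 1$, $\stabPar^2\lesssim\kappa^2$, H\"older, and $\kappa\norm{L^2}{\testfunh}\le\Honekappa{\testfunh}$, exactly as in the paper. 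The only difference is bookkeeping (the paper first splits $\projLtwoisol\sol-\solh=(\projLtwoisol\sol-\sol)+(\sol-\solh)$ and treats the two pieces as $\errHmonelin$ and $\errHmonenonlin$ separately, whereas you do one combined computation), and, reassuringly, your remainder $2\sol|e|^2+\sol^*e^2+|e|^2e$ agrees with the paper's identity \eqref{eq:Taylor_nonlinear_errh}, i.e., with $2|\sol|^2\solh+\sol^2\solh^*$ in the last integral of $\errHmonenonlin$ rather than the statement's literal $2\bigl(|\sol|^2\solh+\sol^2\solh^*\bigr)$, which appears to be a typo in the lemma as stated.
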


	\begin{proof}
		Inserting the exact solution $\sol$, we decompose $\errHmone$ as
		\begin{align} 
			\errHmonelin (\testfunh)  = \dualp{\energy''(\sol)  ( \projLtwoisol   \sol -\sol)}{\testfunh},
			\qquad
			\errHmonenonlin (\testfunh)  = \dualp{\energy''(\sol)  ( \sol - \solh)}{\testfunh},
		\end{align}
		and treat the two terms separately. We begin with the linear part and use Lemma~\ref{lem:Frechet_functional}, the definition of $\abilmagstabsym{\cdot}{\cdot}$ in \eqref{eq:def_abilmagstabsym}, and the orthogonality condition of $\projLtwoisol$ to obtain
		\begin{align}
 			& \dualp{\energy''(\sol)  ( \projLtwoisol   \sol -\sol)}{\testfunh}
			\\
			&=\abilmag{ \projLtwoisol   \sol -\sol}{\testfunh}
			\\
			&\quad 
			+ \kappa^2 \Real \int_\Omega 
			\Bigl( 
			(|\sol|^2 - 1) ( \projLtwoisol   \sol -\sol) + \sol^2 ( \projLtwoisol   \sol -\sol)^*+ |\sol|^2 ( \projLtwoisol   \sol -\sol)  \Bigr) \testfunh^* \,dx
			\\
			&=- \stabPar^2 \Real \int_\Omega 
			( \projLtwoisol   \sol -\sol)  \testfunh^* \,dx
			\\
			&\quad 
			+  \kappa^2 \Real \int_\Omega 
			\Bigl( 
			(|\sol|^2 - 1) ( \projLtwoisol   \sol -\sol) + \sol^2 ( \projLtwoisol   \sol -\sol)^*+ |\sol|^2 ( \projLtwoisol   \sol -\sol)  \Bigr) \testfunh^* \,dx.
		\end{align}
		Using that $\kappa \norm{L^2}{\testfunh} \leq \Honekappa{\testfunh}$  gives the first estimate in part \bulletpoint{b}. 
		
		For the nonlinear part, we note with Lemma~\ref{lem:Frechet_functional} the identity for $\testfunTWO,\testfun\in H^1$
		\begin{equation}
			\dualp{	\energy''(\testfunTWO) \testfunTWO}{\testfun}= \dualp{	\energy'(\testfunTWO)}{\testfun} + 2 \kappa^2 \Real \int_{\Omega}  |\testfunTWO|^2 \testfunTWO \testfun^*  \,dx .
		\end{equation}
		Since $\dualp{	\energy'(\sol)}{ \testfunh } = \dualp{	\energy'(\solh)}{ \testfunh } = 0 $, we expand 
		\begin{align}
			\dualp{	\energy''(\sol)  (\sol - \solh)}{\testfunh}	
			&=
			\dualp{	\energy''(\sol)  \sol }{ \testfunh } 
			-
			\dualp{	\energy''(\solh)  \solh }{ \testfunh } 
			+
			\dualp{	\bigl( \energy''(\solh) - \energy''(\sol)  \bigr)  \solh }{ \testfunh } 
			\\
			&= 
			2 \kappa^2
			\Real  \int_\Omega |\sol|^2 \sol \testfunh^* 	\,dx
			-
			2 \kappa^2
			\Real  \int_\Omega |\solh|^2 \solh \testfunh^* 	\,dx
			\\
			&\quad + \kappa^2
			\Real  \int_\Omega
			2 \bigl( |\solh|^2 - |\sol|^2 \bigr) \solh \testfunh^*  
			+ \bigl( \solh^2 - \sol^2 \bigr) \solh^* \testfunh^* 
			\,dx
			\\
			&= 
			2 \kappa^2
			\Real  \int_\Omega |\sol|^2 \sol \testfunh^* 	\,dx
			+
			\kappa^2
			\Real  \int_\Omega |\solh|^2 \solh \testfunh^* 	\,dx
			\\
			&\quad -
			\kappa^2
			\Real  \int_\Omega
			\bigl(  2 |\sol|^2 \solh + \sol^2 \solh^* \bigr)  \testfunh^*  \,dx ,
		\end{align}
		where we collected terms in the last step. For the estimate, we write
		$\solh = \sol - \errh$
		and compute
		\begin{align} \label{eq:Taylor_nonlinear_errh}
			\ 2 |\sol|^2 \sol  +  |\solh|^2 \solh  -
			\bigl( 2 |\sol|^2 \solh + \sol^2 \solh^* \bigr) 
			&= 2  \sol |\errh|^2 + \errh^2 \sol^* - |\errh|^2 \errh  ,     
		\end{align}
		which together with $|\sol|\leq 1$ and the H{\"o}lder inequality gives the second bound.
	\end{proof}
	
	Now we have everything together to prove the first part of Theorem~\ref{thm:main}, i.e., the $\HonekappaSpace$-estimates for the discrete minimizers.
	
	\begin{proposition} \label{prop:convergence_ahat_general}
		Let $\sol$ and $\solh$  be minimizers of \eqref{eq:energy_functional_times_kappa2}
		and \eqref{eq:energy_functional_discrete}, respectively, and assume the orthogonality  $\ipsymLtwo{\solh}{\ci \sol} = 0$.
	\begin{itemize}
	\item[{\normalfont (a)}] We have for the fully discrete error
		\begin{align}
			\Honekappa{ \sol - \solh}  &\lesssim  
			\Honekappa{\sol- \projLtwoisol \sol  }  
			+
			\kappa \, \CsolH
			\norm{L^2}{\sol- \projLtwoisol \sol  } 
			\\
			&\quad
			+ 
			\kappa \, \CsolH
			\bigl( \norm{L^4}{\sol - \solh }^2 
			+
			\norm{L^6}{\sol - \solh}^3 \bigr) \,.
		\end{align}
	\item[{\normalfont (b)}] For $\h$ small enough and the (unique) minimizer $\sol$ in  Proposition~\ref{prop:abstract_convergence}, it holds
		\begin{align}
			\Honekappa{ \sol - \solh}  &\lesssim 
			\Honekappa{\sol- \projLtwoisol \sol  }  
			+
			\kappa \, \CsolH
			\norm{L^2}{\sol- \projLtwoisol \sol  } .
		\end{align}
	\end{itemize}		
	\end{proposition}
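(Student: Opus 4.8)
The plan is to estimate the purely discrete quantity $\projLtwoisol \sol -\solh$ through the discrete inf-sup stability and to recover the true error $\sol-\solh$ by a triangle inequality. The decisive preliminary observation is that $\projLtwoisol \sol -\solh \in \VShperp$: indeed $\projLtwoisol \sol \in \VShperp$ by definition of the projection, while $\solh \in \VSh$ together with the assumed orthogonality $\ipsymLtwo{\solh}{\ci\sol}=0$ shows $\solh \in \VShperp$. This is exactly what lets us insert the difference as a test direction into Lemma~\ref{lem:du_F_inf_sup_solvability_discrete}.

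For part \bulletpoint{a} I would first apply the discrete inf-sup estimate of Lemma~\ref{lem:du_F_inf_sup_solvability_discrete}(a) to $\testfunTHREE_\h = \projLtwoisol \sol -\solh$, and then rewrite the numerator by means of the error equation \eqref{eq:error_equation}, namely $\dualp{\energy''(\sol)(\projLtwoisol \sol -\solh)}{\testfunh}=\errHmone(\testfunh)$. Since $\VShperp \subset H^1$, the resulting supremum over $\VShperp$ is dominated by the full dual norm, giving
\[ \Honekappa{\projLtwoisol \sol -\solh} \lesssim \CsolH\,\Honekappaminus{\errHmone} \le \CsolH\bigl(\Honekappaminus{\errHmonelin}+\Honekappaminus{\errHmonenonlin}\bigr), \]
where $\CsolH$ denotes the discrete solvability constant provided by the lemma. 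Inserting the two bounds from Lemma~\ref{lem:rep_error}(b) and combining with $\Honekappa{\sol-\solh}\le \Honekappa{\sol-\projLtwoisol \sol}+\Honekappa{\projLtwoisol \sol -\solh}$ produces precisely the estimate claimed in \bulletpoint{a}.

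For part \bulletpoint{b} the idea is to absorb the nonlinear remainder $\kappa\CsolH\bigl(\norm{L^4}{\sol-\solh}^2+\norm{L^6}{\sol-\solh}^3\bigr)$ into the left-hand side. Writing $e\coloneqq\Honekappa{\sol-\solh}$, I would invoke the Sobolev embeddings $H^1\hookrightarrow L^4,L^6$ (valid for $d\le 3$) together with $\norm{H^1}{\cdot}\le\Honekappa{\cdot}$ in the relevant regime $\kappa\gtrsim1$ to get $\norm{L^4}{\sol-\solh}^2+\norm{L^6}{\sol-\solh}^3 \lesssim e^2+e^3$. Part \bulletpoint{a} then reads $e \lesssim A + \kappa\CsolH(e^2+e^3)$, with $A\coloneqq\Honekappa{\sol-\projLtwoisol \sol}+\kappa\CsolH\norm{L^2}{\sol-\projLtwoisol \sol}$. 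The essential input is the qualitative convergence $e\to0$ as $\h\to0$, supplied by Proposition~\ref{prop:abstract_convergence} for exactly the continuous minimizer $\sol$ selected by the twisting condition $\ipsymLtwo{\solh}{\ci\sol}=0$: it guarantees that for $\h$ small enough the prefactor $\kappa\CsolH(e+e^2)$ falls below $\tfrac12$ after collecting constants, so the nonlinear term is absorbed and $e\lesssim A$ follows, which is the assertion.

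The step I expect to be the main obstacle is the absorption in \bulletpoint{b}. The nonlinear contributions are formally of higher order, but because their prefactor $\kappa\CsolH$ grows with $\kappa$, a purely local (rate-based) estimate cannot by itself certify that they are small. The argument closes only thanks to the \emph{rate-free} a priori convergence of Proposition~\ref{prop:abstract_convergence} for the specific minimizer fixed by the orthogonality condition; this is precisely the ``crucial'' a priori information advertised in the introduction, and it is what turns the bootstrap into a legitimate absorption rather than a circular one.
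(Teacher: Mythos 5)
Your proposal is correct and follows essentially the same route as the paper's proof: for part (a) the triangle inequality plus the discrete inf-sup stability of Lemma~\ref{lem:du_F_inf_sup_solvability_discrete} applied to $\projLtwoisol \sol - \solh \in \VShperp$, combined with the error representation and dual-norm bounds of Lemma~\ref{lem:rep_error}; for part (b) absorption of the higher-order terms using the qualitative, rate-free convergence supplied by Proposition~\ref{prop:abstract_convergence}. Your additional details --- checking explicitly that $\solh \in \VShperp$ via the orthogonality condition, and converting the $L^4$/$L^6$ remainders into powers of $\Honekappa{\sol-\solh}$ by Sobolev embedding before absorbing --- are exactly the steps the paper leaves implicit in its one-line treatment of (b).
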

	
	Let us point out that \bulletpoint{a} holds for any minimizers $\sol$ and $\solh$. But to ensure that the higher order terms are indeed negligible, we need the a priori information from the abstract convergence result in Proposition~\ref{prop:abstract_convergence}.  
	
	\begin{proof}[Proof of Proposition~\ref{prop:convergence_ahat_general}]
		\bulletpoint{a} Using the triangle inequality, we obtain
		\begin{align}
			\Honekappa{ \sol - \solh}  &\lesssim 
			\Honekappa{\sol-  \projLtwoisol   \sol  }  
			+
			\Honekappa{  \projLtwoisol   \sol - \solh},
		\end{align}
		and are left to bound the second term.  Lemmas~\ref{lem:du_F_inf_sup_solvability_discrete}
		and \ref{lem:rep_error} then give
		\begin{align}
			\Honekappa{  \projLtwoisol   \sol - \solh} 
			&\lesssim
			\Csol \Honekappaminus{\errHmone}
			\\
			&\lesssim
			\kappa \Csol  \bigl(   \norm{L^2}{\sol -  \projLtwoisol   \sol}
			+
			\norm{L^{4}}{\sol - \solh}^2   
			+ \norm{L^{6}}{\sol - \solh}^3    \bigr) ,
		\end{align}
		and the bound is established.
		
		\bulletpoint{b} With the convergence shown in Proposition~\ref{prop:abstract_convergence} for $\h$ sufficiently small, we can absorb the higher order terms, and obtain the claimed estimate for $\h \leq \h_0$.
	\end{proof}
	
	We can further show quadratic convergence in the $L^2$-norm for the discrete minimizers using an Aubin--Nitsche argument.

	\begin{lemma} \label{lem:convergence_ahat_general_L2}
		Let $\sol$ and $\solh$  be a minimizers of \eqref{eq:energy_functional_times_kappa2}
		and \eqref{eq:energy_functional_discrete}, respectively, and assume the orthogonality  $\ipsymLtwo{\solh}{\ci \sol} = 0$.
		We have for the fully discrete error
		\begin{align}
			\norm{L^2}{ \sol - \solh } &\lesssim  \Csol   
			\h  \Honekappa{\sol - \solh} \\
			&\qquad + 
			\Csol  \kappa \norm{L^2}{\sol - \solh} \bigl(
			\norm{L^3}{\sol - \solh } 
			+
			\norm{L^6}{\sol - \solh}^2  
			\bigr) ,
		\end{align}
		and hence for $\h$ sufficiently small, it holds
		for the (unique) minimizer $\sol$ in  Proposition~\ref{prop:abstract_convergence}
		\begin{align}
			\norm{L^2}{\sol - \solh} 
			\lesssim 
			\Csol 	
			\h 
			\Honekappa{\sol-  \solh  } .
		\end{align}
	\end{lemma}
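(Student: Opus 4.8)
The plan is to run an Aubin--Nitsche duality argument, writing throughout $\errh \coloneqq \sol - \solh$. First I record that $\errh \in \Honeperp$: by assumption $\ipsymLtwo{\solh}{\ci\sol}=0$, while $\ipsymLtwo{\sol}{\ci\sol} = \Real\int_\Omega \sol\,(\ci\sol)^*\,dx = 0$ because the integrand is purely imaginary, whence $\ipsymLtwo{\errh}{\ci\sol}=0$. Consequently the functional $\testfun\mapsto\ipsymLtwo{\errh}{\testfun}$ is admissible on $\Honeperp$, and Corollary~\ref{cor:du_F_inf_sup_solvability} furnishes a unique dual solution $z\in\Honeperp$ with $\dualp{\energy''(\sol)z}{\testfun} = \ipsymLtwo{\errh}{\testfun}$ for all $\testfun\in\Honeperp$ and $\Honekappa{z}\lesssim\Csol\kappa^{-1}\norm{L^2}{\errh}$. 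Testing with $\testfun=\errh$ and invoking the symmetry of $\energy''(\sol)$ from Lemma~\ref{lem:Frechet_functional} produces the starting identity $\norm{L^2}{\errh}^2 = \ipsymLtwo{\errh}{\errh} = \dualp{\energy''(\sol)\errh}{z}$.

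Next I split $z = \projLtwoisol z + (z-\projLtwoisol z)$ with $\projLtwoisol z\in\VShperp$, so that $\norm{L^2}{\errh}^2 = \dualp{\energy''(\sol)\errh}{\projLtwoisol z} + \dualp{\energy''(\sol)\errh}{z-\projLtwoisol z}$. For the projection-error part I combine the boundedness of $\energy''(\sol)$ (Lemma~\ref{lem:Frechet_functional}) with the approximation bound \eqref{eq:Honekappa_bound_alternative} applied to the dual problem with right-hand side $\errh\in L^2$, which gives $\Honekappa{z-\projLtwoisol z}\lesssim\Csol\h\norm{L^2}{\errh}$. Hence $|\dualp{\energy''(\sol)\errh}{z-\projLtwoisol z}|\lesssim\Honekappa{\errh}\,\Honekappa{z-\projLtwoisol z}\lesssim\Csol\h\,\Honekappa{\errh}\norm{L^2}{\errh}$, which is exactly the linear term of the claimed bound.

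For the Galerkin part I use $\projLtwoisol z\in\VShperp$, so the decomposition in Lemma~\ref{lem:rep_error} identifies $\dualp{\energy''(\sol)\errh}{\projLtwoisol z} = \errHmonenonlin(\projLtwoisol z)$; I then insert the explicit representation of $\errHmonenonlin$ derived there from the Taylor identity \eqref{eq:Taylor_nonlinear_errh}, whose integrand is controlled (using $|\sol|\le1$) by $\kappa^2(|\errh|^2+|\errh|^3)\,|\projLtwoisol z|$. The decisive step — the one yielding the improved $L^2$-rate rather than the cruder $\Honekappaminus{\errHmonenonlin}$-bound of Lemma~\ref{lem:rep_error}(b) — is to apply H\"older so as to pull out one explicit factor $\norm{L^2}{\errh}$ and to pair the remaining mass of $\projLtwoisol z$ in $L^6$, namely $\int_\Omega|\errh|^2|\projLtwoisol z|\,dx\le\norm{L^2}{\errh}\,\norm{L^3}{\errh}\,\norm{L^6}{\projLtwoisol z}$ and $\int_\Omega|\errh|^3|\projLtwoisol z|\,dx\le\norm{L^2}{\errh}\,\norm{L^6}{\errh}^2\,\norm{L^6}{\projLtwoisol z}$. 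Since $\projLtwoisol$ is $\HonekappaSpace$-stable (coercivity and boundedness of $\abilmagstabsym{\cdot}{\cdot}$ in Lemma~\ref{lem:prop_bil}) and $H^1\hookrightarrow L^6$ for $d\le3$, I estimate $\norm{L^6}{\projLtwoisol z}\lesssim\Honekappa{z}\lesssim\Csol\kappa^{-1}\norm{L^2}{\errh}$, so that $|\errHmonenonlin(\projLtwoisol z)|\lesssim\kappa\Csol\,\norm{L^2}{\errh}^2\bigl(\norm{L^3}{\errh}+\norm{L^6}{\errh}^2\bigr)$. Collecting both contributions and dividing by $\norm{L^2}{\errh}$ gives the first assertion.

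The second, cleaner estimate follows by absorption: by Proposition~\ref{prop:abstract_convergence} and the $\HonekappaSpace$-estimate of Proposition~\ref{prop:convergence_ahat_general}, together with $H^1\hookrightarrow L^3,L^6$, the quantity $\kappa\Csol\bigl(\norm{L^3}{\errh}+\norm{L^6}{\errh}^2\bigr)$ tends to zero as $\h\to0$; thus for $\h$ sufficiently small it is $\le\tfrac12$ and the nonlinear term can be moved to the left-hand side, leaving $\norm{L^2}{\errh}\lesssim\Csol\h\,\Honekappa{\errh}$. I expect the main obstacle to be precisely the sharp H\"older bookkeeping in the Galerkin term: one must retain a single explicit factor $\norm{L^2}{\errh}$ and balance the remaining powers of $\errh$ in $L^3$/$L^6$ against $\norm{L^6}{\projLtwoisol z}$, since a direct appeal to Lemma~\ref{lem:rep_error}(b) would lose this extra $\norm{L^2}{\errh}$ factor and the correct exponents.
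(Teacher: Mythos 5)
Your proof is correct and follows essentially the same Aubin--Nitsche argument as the paper: the same dual problem posed on $\Honeperp$ via Corollary~\ref{cor:du_F_inf_sup_solvability}, the same splitting $z = \projLtwoisol z + (z - \projLtwoisol z)$ with the projection-error term controlled by \eqref{eq:Honekappa_bound_alternative}, and the same H\"older/Sobolev treatment of $\errHmonenonlin(\projLtwoisol z)$ yielding the factor $\kappa \Csol \norm{L^2}{\errh}$. The only (welcome) additions are your explicit verification that $\errh \in \Honeperp$ and the spelled-out absorption step for the second estimate, both of which the paper leaves implicit.
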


	\begin{proof}
		Recall the abbreviation  $\errh = \sol - \solh$, and let $\testfunFOUR \in \Honeperp$ be the solution of 
		\begin{equation}
			\ipsymLtwo{\energy''(\sol)  \testfunFOUR }{\testfun} = \ipsymLtwo{ \errh }{\testfun} ,
		\end{equation}
		and note that Corollary~\ref{cor:du_F_inf_sup_solvability} and
		\eqref{eq:Honekappa_bound_alternative}
		give the estimate
		\begin{equation} \label{eq:H2_Aubin_Nitsche}
			\kappa \Honekappa{\testfunFOUR} + \h^{-1} \Honekappa{\testfunFOUR - \projLtwoisol \testfunFOUR} \leq  \Csol  \norm{L^2}{ \errh }.
		\end{equation}
		Using the symmetry of $\energy''$, we can decompose the error as
		\begin{align}
			\norm{L^2}{ \errh  }^2 &= 
			\dualp{\energy''(\sol) \, \errh }{ \testfunFOUR - \projLtwoisol \testfunFOUR}
			+
			\dualp{\energy''(\sol) \, \errh }{ \projLtwoisol \testfunFOUR }
			%
			= E^1 +	\errHmonenonlin (\projLtwoisol \testfunFOUR ) ,
		\end{align}
		where $\errHmonenonlin$ is defined in Lemma~\ref{lem:rep_error}.
		We estimate the first term with Lemma~\ref{lem:Frechet_functional} 
		and \eqref{eq:H2_Aubin_Nitsche} 
		\begin{align}
			E^1 &\lesssim
			\Honekappa{\errh} \Honekappa{\testfunFOUR - \projLtwoisol \testfunFOUR}
			\lesssim
			\Csol   \h   \Honekappa{e} \norm{L^2}{ \errh } .
		\end{align}
		For the second term, we use the representation of $\errHmonenonlin$ in
		Lemma~\ref{lem:rep_error} and
		\eqref{eq:Taylor_nonlinear_errh} 
		together with \eqref{eq:H2_Aubin_Nitsche} and the H{\"o}lder equation
		to obtain 
		\begin{align}
			| \errHmonenonlin (\projLtwoisol \testfunFOUR ) | &\lesssim
			\kappa^2  \norm{L^{2}}{\sol - \solh}  \bigl( \norm{L^{3}}{\sol - \solh}   
			+ \norm{L^{6}}{\sol - \solh}^2   \bigr)  \norm{H^1}{\projLtwoisol \testfunFOUR}
			\\
			&\lesssim 
			\kappa \Csol    \norm{L^{2}}{\sol - \solh}  
			\bigl( \norm{L^{3}}{\sol - \solh}  	+ \norm{L^{6}}{\sol - \solh}^2   \bigr) 
			\norm{L^2}{\errh} ,
		\end{align}
		where we used $\kappa \norm{H^1}{\projLtwoisol \testfunFOUR} \lesssim \kappa \Honekappa{\testfunFOUR} \lesssim \Csol \norm{L^2}{\errh}  $ 
		in the last step.
		Combining the two bounds and dividing by $\norm{L^2}{\errh}$ gives the desired estimate.
	\end{proof}
	
	A similar trick gives the improved convergence of $\projLtwoisol$ in the $L^2$-norm.
	
	\begin{lemma} \label{lem:proj_L2}
		For $\kappa \h$ small enough, the following bound holds for all $\testfunTHREE \in \Honeperp$
		\begin{equation}
			\norm{L^2}{ \testfunTHREE - \projLtwoisol (\testfunTHREE) } 
			\lesssim \h 	\Honekappa{ \testfunTHREE - \projLtwoisol (\testfunTHREE) } ,
		\end{equation}
		where the constant is independent of $\h$ and $\kappa$.
	\end{lemma}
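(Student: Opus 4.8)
The plan is to run a standard Aubin--Nitsche duality argument, in direct analogy with the proof of Lemma~\ref{lem:convergence_ahat_general_L2}, but now for the $\abilmagstabsym{\cdot}{\cdot}$-orthogonal projection $\projLtwoisol$. This case is in fact simpler, because $\abilmagstabsym{\cdot}{\cdot}$ is coercive on $\Honeperp$ by Lemma~\ref{lem:prop_bil}, rather than merely inf-sup stable, so no absorption step is needed. Abbreviate $\errh := \testfunTHREE - \projLtwoisol \testfunTHREE$, and observe that $\errh \in \Honeperp$ since both $\testfunTHREE \in \Honeperp$ and $\projLtwoisol \testfunTHREE \in \VShperp \subset \Honeperp$; in particular $\errh$ is an admissible test function in any variational problem posed on $\Honeperp$.

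First I would introduce the dual problem: by the well-posedness part of Lemma~\ref{lem:wepo_abilmagstab} there is a unique $\testfunFOUR \in \Honeperp$ with
\[
	\abilmagstabsym{\testfunFOUR}{\testfun} = \ipsymLtwo{\errh}{\testfun}, \qquad \text{for all } \testfun \in \Honeperp .
\]
Testing with $\testfun = \errh$ and using the symmetry of $\abilmagstabsym{\cdot}{\cdot}$ yields
\[
	\norm{L^2}{\errh}^2 = \ipsymLtwo{\errh}{\errh} = \abilmagstabsym{\testfunFOUR}{\errh} = \abilmagstabsym{\errh}{\testfunFOUR} .
\]
Next I would exploit the defining (Galerkin) orthogonality $\abilmagstabsym{\errh}{\testfunh} = 0$ for all $\testfunh \in \VShperp$ to subtract the discrete dual approximation. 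Since $\projLtwoisol \testfunFOUR \in \VShperp$, this gives, together with the boundedness from Lemma~\ref{lem:prop_bil},
\[
	\norm{L^2}{\errh}^2 = \abilmagstabsym{\errh}{\testfunFOUR - \projLtwoisol \testfunFOUR} \leq \Cbnd\, \Honekappa{\errh}\, \Honekappa{\testfunFOUR - \projLtwoisol \testfunFOUR} .
\]
The crucial observation is that $\testfunFOUR$ meets precisely the hypothesis of Assumption~\ref{ass:FEM_space}(b) with source $f = \errh \in L^2$; hence $\Honekappa{\testfunFOUR - \projLtwoisol \testfunFOUR} \lesssim h \norm{L^2}{\errh}$. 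Substituting this bound and dividing by $\norm{L^2}{\errh}$ yields $\norm{L^2}{\errh} \lesssim h\, \Honekappa{\errh}$, which is the claim.

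I expect no serious obstacle here, as the estimate is a textbook duality argument; the only points requiring care are the two noted above, namely checking $\errh \in \Honeperp$ so that it is admissible in the dual problem, and recognizing that the dual pair $(\testfunFOUR, f=\errh)$ fits the form of Assumption~\ref{ass:FEM_space}(b) verbatim, so that the gain of one factor of $h$ comes directly from the assumed approximation property (which itself encodes the $H^2$-regularity supplied by Lemma~\ref{lem:wepo_abilmagstab} on convex $\Omega$). Unlike in Lemma~\ref{lem:du_F_inf_sup_solvability_discrete}, the coercivity of $\abilmagstabsym{\cdot}{\cdot}$ means the smallness of $\kappa h$ is not needed to make the argument close; it enters only insofar as it guarantees that the hidden constant in the approximation estimate remains uniform in $\kappa$ and $h$, consistent with the regime in which the surrounding analysis operates.
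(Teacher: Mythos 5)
Your proposal is correct and follows essentially the same route as the paper's proof: the same Aubin--Nitsche duality argument with the dual problem from Lemma~\ref{lem:wepo_abilmagstab}, the Galerkin orthogonality of $\projLtwoisol$, boundedness of $\abilmagstabsym{\cdot}{\cdot}$, and Assumption~\ref{ass:FEM_space}(b) applied with $f = \testfunTHREE - \projLtwoisol\testfunTHREE$. Your added remarks (admissibility of the error as a test function in $\Honeperp$, and that coercivity makes any absorption unnecessary) are accurate elaborations of steps the paper leaves implicit.
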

	
	\begin{proof}
		We use an Aubin--Nitsche argument and let $\testfunFOUR \in
		\Honeperp$ be the solution of
		\begin{equation}
			\abilmagstabsym{\testfunFOUR}{\testfun} = \ipsymLtwo{ \testfunTHREE - \projLtwoisol \testfunTHREE}{\testfun}, \quad \text{ for all } \testfun \in \Honeperp .
		\end{equation} 
		Using orthogonality, we have by \eqref{eq:ass_bound_H1kappa} that
		\begin{equation}
			\norm{L^2}{\testfunTHREE - \projLtwoisol \testfunTHREE}^2
			=
			\abilmagstabsym{\testfunFOUR - \projLtwoisol \testfunFOUR}{\testfunTHREE - \projLtwoisol \testfunTHREE} 
			\lesssim
			\h \norm{L^2}{\testfunTHREE - \projLtwoisol \testfunTHREE}	\Honekappa{\testfunTHREE - \projLtwoisol \testfunTHREE},
		\end{equation}
		and the claim follows.
	\end{proof}

	Finally, we provide the error bounds for the energy which behaves in the lowest order as the square of the error in the $\HonekappaSpace$-norm.
	
	\begin{lemma} \label{lem:convergence_energy}
		Let $\sol$ and $\solh$  be minimizers of \eqref{eq:energy_functional_times_kappa2}
		and \eqref{eq:energy_functional_discrete}, respectively.
		The error in the energies is bounded by
		\begin{equation}
			0\leq 	\energy(\solh) - \energy(\sol) 
			\lesssim
			\Honekappa{\sol -\solh}^2 \,
			\bigl( 
			1
			+
			\kappa^{1/2} \Honekappa{\sol -\solh}
			+
			\kappa \Honekappa{\sol -\solh}^2 \bigr).
		\end{equation}
	\end{lemma}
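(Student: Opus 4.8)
The lower bound $0\le\energy(\solh)-\energy(\sol)$ is immediate: $\sol$ is a global minimizer of $\energy$ over $H^1(\Omega)$ by Theorem~\ref{thm:cont_minimizer}, and $\solh\in\VSh\subset H^1(\Omega)$, so $\energy(\sol)\le\energy(\solh)$. For the upper bound the decisive structural fact is that $\energy$ is a \emph{polynomial of degree four} in $\sol$: the magnetic term $\tfrac12\abilmag{\sol}{\sol}$ is quadratic and $\tfrac{\kappa^2}{4}\int_\Omega(1-|\sol|^2)^2\,dx$ is quartic. Hence the Taylor expansion of $\energy$ about $\sol$ \emph{terminates exactly}. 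Writing $\errh:=\sol-\solh$ and using $\energy'(\sol)=0$, I would start from
\begin{equation*}
	\energy(\solh)-\energy(\sol)
	= \tfrac12\dualp{\energy''(\sol)\errh}{\errh}
	- \tfrac16\,\energy'''(\sol)[\errh,\errh,\errh]
	+ \tfrac1{24}\,\energy''''(\sol)[\errh,\errh,\errh,\errh],
\end{equation*}
and show that the three summands generate precisely the three terms inside the bracket.

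For the quadratic contribution, Lemma~\ref{lem:Frechet_functional}(b) gives $|\dualp{\energy''(\sol)\errh}{\errh}|\lesssim\Honekappa{\errh}^2$, which is the leading ``$1$'' in the bracket. Only the quartic part of the nonlinearity survives at third and fourth order (the magnetic term and the quadratic piece $-2|\sol|^2$ do not contribute beyond second order), and a direct differentiation yields $\energy'''(\sol)[\errh,\errh,\errh]=6\kappa^2\,\Real\int_\Omega(\sol\,\errh^*)\,|\errh|^2\,dx$ and $\energy''''(\sol)[\errh,\errh,\errh,\errh]=6\kappa^2\int_\Omega|\errh|^4\,dx$. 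Invoking the pointwise bound $|\sol|\le1$ from Theorem~\ref{thm:cont_minimizer}, the third- and fourth-order terms are controlled respectively by $\kappa^2\norm{L^3}{\errh}^3$ and $\kappa^2\norm{L^4}{\errh}^4$.

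It then remains to convert these $L^3$- and $L^4$-norms into the $\HonekappaSpace$-norm with the correct $\kappa$-weights, which is the heart of the estimate. The crucial input is the scaling already built into the norm, namely $\norm{L^2}{\errh}\le\kappa^{-1}\Honekappa{\errh}$ together with $\norm{L^2}{\nabla\errh}\le\Honekappa{\errh}$. Feeding these into the Gagliardo--Nirenberg inequality $\norm{L^p}{\errh}\lesssim\norm{H^1}{\errh}^{\theta}\norm{L^2}{\errh}^{1-\theta}$ with $\theta=d(\tfrac12-\tfrac1p)$, whose worst case is $d=3$ (giving $\theta=\tfrac12$ for $p=3$ and $\theta=\tfrac34$ for $p=4$), yields, in the relevant regime $\kappa\gtrsim1$, the bounds $\norm{L^3}{\errh}\lesssim\kappa^{-1/2}\Honekappa{\errh}$ and $\norm{L^4}{\errh}\lesssim\kappa^{-1/4}\Honekappa{\errh}$. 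Consequently $\kappa^2\norm{L^3}{\errh}^3\lesssim\kappa^{1/2}\Honekappa{\errh}^3$ and $\kappa^2\norm{L^4}{\errh}^4\lesssim\kappa\,\Honekappa{\errh}^4$, and assembling the three contributions gives exactly $\energy(\solh)-\energy(\sol)\lesssim\Honekappa{\errh}^2\bigl(1+\kappa^{1/2}\Honekappa{\errh}+\kappa\Honekappa{\errh}^2\bigr)$. I expect the last step to be the main obstacle: one must carefully track the interpolation exponents across $d=2,3$ and verify that the powers of $\kappa$ always combine favourably, the $d=3$ case being the tight one that dictates the stated weights $\kappa^{1/2}$ and $\kappa$.
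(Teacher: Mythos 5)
Your proposal is correct and takes essentially the same route as the paper: the paper also expands $\energy(\solh)-\energy(\sol)$ exactly around $\sol$ (by direct algebraic manipulation of the quartic term rather than the terminating Taylor formula, but the resulting representation coincides term by term with yours), cancels the linear contribution via $\energy'(\sol)=0$, bounds the cubic and quartic remainders by $\kappa^2\norm{L^3}{\errh}^3$ and $\kappa^2\norm{L^4}{\errh}^4$ using $|\sol|\le 1$, and then converts these by interpolation of $L^3$ and $L^4$ between $L^2$ and $L^6$ — which for $d=3$ gives exactly your Gagliardo--Nirenberg exponents — into the stated weights $\kappa^{1/2}$ and $\kappa$. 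Your remark that $d=3$ is the tight case also matches the paper, which notes that the $\kappa$-powers improve for $d=2$ while the leading term is unchanged.
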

	
	We note that the powers of $\kappa$ can be improved in the case $d =2$, but since the leading order term does not change, we will not give any details here.
	
	\begin{proof}[Proof of Lemma~\ref{lem:convergence_energy}]
		Since $\VSh \subset \VS$, we have $\energy(\sol) \leq \energy(\solh)$, and thus the lower bound. In the next step, we derive the representation
		\begin{equation}\label{eq:rep_energy}
			\begin{aligned} 
				\energy(\solh) - \energy(\sol) 
				&=
				\frac12 \abilmag{\sol - \solh}{\sol - \solh}
				\\
				&\qquad +
				\frac{\kappa^2}{4} \Real \int_\Omega
				(1-|\solh|^2)^2 -	(1-|\sol|^2)^2
				+
				4 (|\sol|^2 -1 ) \sol (\sol - \solh)^* \,dx¸
			\end{aligned}
		\end{equation}
		Let us first note the identity
		\begin{equation}
			\frac12 \abilmag{\sol - \solh}{\sol - \solh} = 
			\frac12 \abilmag{\solh}{\solh} 
			-
			\frac12 \abilmag{\sol}{\sol} 
			+
			\abilmag{\sol}{\sol - \solh} ,
		\end{equation}
		and rewrite the energies as 
		\begin{align}
			& \, \energy(\solh) - \energy(\sol) 
			\\
			 &= \frac12 \abilmag{\solh}{\solh} 
			-
			\frac12 \abilmag{\sol}{\sol} 
			+
			\frac{\kappa^2}{4}
			\Real \int_\Omega
			(1-|\solh|^2)^2 -	(1-|\sol|^2)^2 \,dx
			\\
			&= \frac12 \abilmag{\sol - \solh}{\sol - \solh} 
			+
			\frac{\kappa^2}{4}
			\Real \int_\Omega
			(1-|\solh|^2)^2 -	(1-|\sol|^2)^2 \,dx
			-
			\abilmag{\sol}{\sol - \solh}.
		\end{align}
		Since $\sol$ is a minimizer, we have $\dualp{\energy'(\sol)}{\sol -\solh} = 0$
		and thus by \eqref{eq:first_Frechet}
		\begin{equation}
			- \abilmag{\sol}{\sol - \solh} =  \kappa^2 \Real \int_\Omega
			(|\sol|^2 -1 ) \sol (\sol - \solh)^* \,dx ,
		\end{equation}
		and hence \eqref{eq:rep_energy} holds. The first term of the representation gives the $\HonekappaSpace$-norm in the estimate,
		and it remains to study  the nonlinear part. We first investigate the difference of the squares.
		As before, we write 
		$\solh = \sol - \errh$
		and obtain
		\begin{align}
			\bigl(1-|\sol - \errh|^2\bigr)^2  &= 
			\bigl( |\sol|^2 + |\errh|^2 - 1 - 2 \Real (\sol \errh^*)  \bigr)^2 
			\\
			&= |\sol|^4 + 1 - 2 |\sol|^2
			 +
			  4 (1-|\sol|^2 ) \Real (\sol \errh^*) 
			+ \mathcal{O}(|\errh|^2 + |\errh|^3 + |\errh|^4) ,
		\end{align}
		which gives
		\begin{align}
			(1-|\solh|^2)^2 -	(1-|\sol|^2)^2 
			=
			 4  (1-|\sol|^2 )  \Real (\sol \errh^*) 
			+ \mathcal{O}(|\errh|^2 + |\errh|^3 + |\errh|^4) .
		\end{align}
		We now show that the part, which is linear in $\errh$, is
		canceled by the last term in \eqref{eq:rep_energy}.
		In fact, since it holds
		\begin{equation}
			4 \Real (|\sol|^2 -1 ) \sol (\sol - \solh)^*  =  
			4 |\sol|^2  \Real (\sol \errh^* ) 
			- 4 \Real (\sol \errh^*) ,
		\end{equation}
		we conclude from \eqref{eq:rep_energy}, the fact that $|\sol|\leq 1$ and the H{\"o}lder inequality the bound 
		\begin{equation}
			\energy(\solh) - \energy(\sol) 
			\lesssim \Honekappa{\sol -\solh}^2
			+
			\kappa^2
			\bigl( 
			\norm{L^2}{\sol-\solh}^2 +  \norm{L^3}{\sol-\solh}^3
			+
			\norm{L^4}{\sol-\solh}^4 \bigr).
		\end{equation}
		To show the final estimate, we use interpolation theory, see e.g., \cite[Thm.~2.6]{Lun18}, 
		with $\tfrac{1}{3} = \tfrac{\theta}{2} + \tfrac{1-\theta}{6}$ for $\theta = \tfrac12$
		to obtain for $\testfunTHREE \in H^1$
		\begin{equation}
			\kappa^2 \norm{L^3}{\testfunTHREE}^3 \lesssim 
			\kappa^{1/2} \bigl( \kappa \norm{L^2}{\testfunTHREE} \bigr)^{3/2} \norm{L^6}{\testfunTHREE}^{3/2}
			\lesssim \kappa^{1/2} \Honekappa{\testfunTHREE}^3,
		\end{equation}
		and similarly with $\tfrac{1}{4} = \tfrac{\theta}{2} + \tfrac{1-\theta}{6}$ for $\theta = \tfrac14$
		\begin{equation}
			\kappa^2 \norm{L^4}{\testfunTHREE}^4 \lesssim 
			\kappa \bigl( \kappa \norm{L^2}{\testfunTHREE} \bigr) \norm{L^6}{\testfunTHREE}^3
			\lesssim \kappa \Honekappa{\testfunTHREE}^4,
		\end{equation}
		and the second claim is established.
	\end{proof}
	
	We can finally give the proof of our main result.
	
	\begin{proof}[Proof of Theorem~\ref{thm:main}]
		We mainly collect the results shown in Proposition~\ref{prop:convergence_ahat_general}, Lemma~\ref{lem:convergence_ahat_general_L2}, 
		together with the $L^2$-estimate in
		Lemma~\ref{lem:proj_L2},
		and Lemma~\ref{lem:convergence_energy},
		and the claims are established.
	\end{proof}

	\subsection{Application to Lagrange finite elements}
	
	In this section, we consider the linear Lagrange finite element space $\VSh$ as defined \eqref{eq:defVh}. In order to derive the corresponding error estimates through verifying the assumptions of Theorem~\ref{thm:main}, we require the $L^2$-orthogonal projection onto the ansatz space $\VSh$ as an auxiliary projection. We recall the $L^2$-projection 
	for $\testfunTWO \in L^2$ as
	\begin{equation}
		\ipsymLtwo{\Ltwoproj \testfunTWO}{\testfunh} = \ipsymLtwo{ \testfunTWO}{\testfunh} \quad \mbox{for all} \quad \testfunh \in \VSh.
	\end{equation}
	In the following lemma, we provide corresponding estimates in the $\HonekappaSpace$-norm
	which are the first step towards verifying part \bulletpoint{b} in Assumption~\ref{ass:FEM_space}.

	\begin{lemma} \label{lem:proj}
	
	\begin{itemize}
	\item[{\normalfont (a)}]  The $L^2$-projection $\Ltwoproj$ is stable in 
		$\HonekappaSpace$, i.e., it holds
		\begin{align}
			\Honekappa{\Ltwoproj \testfun} &\lesssim \Honekappa{\testfun},
			\qquad
			\testfun \in H^1,
		\end{align}
		where the constant is independent of $\h$ and $\kappa$.
	\item[{\normalfont (b)}]  
		For all $\testfunFOUR\in H^2$ it holds
		\begin{align}
			\Honekappa{\testfunFOUR - \Ltwoproj \testfunFOUR} 
			&\lesssim \h	\Htwokappa{\testfunFOUR} ,
		\end{align}	
		where the constant is independent of $\h$ and $\kappa$.
	\item[{\normalfont (c)}]  
		If $\Omega$ is convex and $\testfunFOUR\in \Honeperp$ satisfies for $f \in L^2$ the equation
		$\abilmagstabsym{\testfunFOUR}{\testfun} = \ipsymLtwo{f}{\testfun}$ for all $\testfun \in \Honeperp$,
		then 
		\begin{align}
			\Honekappa{\testfunFOUR - \Ltwoproj \testfunFOUR} 
			&\lesssim \h	\norm{L^2}{f}.
		\end{align}
	\end{itemize}	
	\end{lemma}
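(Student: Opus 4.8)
The plan is to treat the three parts in order, the decisive point throughout being that every constant must be tracked so as to remain independent of $\kappa$; it is exactly the $\HonekappaSpace$-weighting that forces the arguments to be sharper than the textbook estimates for the $L^2$-projection.

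For part \bulletpoint{a} I would split $\Honekappa{\Ltwoproj \testfun}^2 = \norm{L^2}{\nabla \Ltwoproj \testfun}^2 + \kappa^2 \norm{L^2}{\Ltwoproj \testfun}^2$. The zeroth-order term is immediate, since the $L^2$-orthogonal projection is a contraction, $\norm{L^2}{\Ltwoproj \testfun} \leq \norm{L^2}{\testfun}$, so that $\kappa^2\norm{L^2}{\Ltwoproj\testfun}^2 \leq \kappa^2\norm{L^2}{\testfun}^2$. The gradient term is the only delicate one: a direct use of the $H^1$-stability \eqref{eq:H1_stab_L2_proj} would produce a spurious $\norm{L^2}{\testfun}^2$ on the right-hand side that cannot be absorbed into $\Honekappa{\testfun}^2$ as $\kappa\to 0$. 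To repair this I would exploit that $\VSh$ contains the constants, so for the mean value $\bar{\testfun}$ of $\testfun$ one has $\Ltwoproj \bar{\testfun} = \bar{\testfun}$ and hence $\nabla \Ltwoproj \testfun = \nabla \Ltwoproj (\testfun - \bar{\testfun})$; then \eqref{eq:H1_stab_L2_proj} together with the Poincar\'e--Wirtinger inequality gives
\[ \norm{L^2}{\nabla \Ltwoproj \testfun} \leq \norm{H^1}{\Ltwoproj(\testfun-\bar{\testfun})} \lesssim \norm{H^1}{\testfun - \bar{\testfun}} \lesssim \norm{L^2}{\nabla \testfun}, \]
with a $\kappa$-independent constant. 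Combining the two contributions yields $\Honekappa{\Ltwoproj\testfun} \lesssim \Honekappa{\testfun}$.

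For part \bulletpoint{b} I would again decompose $\Honekappa{\testfunFOUR - \Ltwoproj \testfunFOUR}$ into its gradient and $L^2$ parts. For the $L^2$ part, the best-approximation property of $\Ltwoproj$ and a standard (quasi-)interpolation estimate give $\norm{L^2}{\testfunFOUR - \Ltwoproj\testfunFOUR} \lesssim \h \norm{L^2}{\nabla \testfunFOUR}$, whence $\kappa\norm{L^2}{\testfunFOUR - \Ltwoproj\testfunFOUR} \lesssim \h\,\kappa\norm{L^2}{\nabla\testfunFOUR} \leq \h\,\kappa\Honekappa{\testfunFOUR} \leq \h\Htwokappa{\testfunFOUR}$, where the last two inequalities are just the definitions in \eqref{eq:def_norms}. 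For the gradient part I would insert a quasi-interpolant $\Ih \testfunFOUR \in \VSh$ and write $\testfunFOUR - \Ltwoproj \testfunFOUR = (\testfunFOUR - \Ih \testfunFOUR) + \Ltwoproj(\Ih \testfunFOUR - \testfunFOUR)$, using $\Ltwoproj \Ih \testfunFOUR = \Ih \testfunFOUR$; applying the $\kappa$-independent gradient stability established in \bulletpoint{a} to the second summand and the interpolation error estimate to both summands gives $\norm{L^2}{\nabla(\testfunFOUR - \Ltwoproj\testfunFOUR)} \lesssim \h\,\norm{H^2}{\testfunFOUR} \leq \h \Htwokappa{\testfunFOUR}$. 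Adding the two contributions gives the claim.

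Finally, part \bulletpoint{c} is an immediate corollary: for convex $\Omega$, Lemma~\ref{lem:wepo_abilmagstab} guarantees that the solution $\testfunFOUR$ lies in $H^2$ with $\Htwokappa{\testfunFOUR} \lesssim \norm{L^2}{f}$, so inserting this bound into the estimate of \bulletpoint{b} yields $\Honekappa{\testfunFOUR - \Ltwoproj\testfunFOUR} \lesssim \h \Htwokappa{\testfunFOUR} \lesssim \h \norm{L^2}{f}$. I expect the main obstacle to be precisely the $\kappa$-uniform gradient stability in \bulletpoint{a}: the naive route loses a factor that blows up as $\kappa \to 0$, and the constant-subtraction plus Poincar\'e argument (relying on $\VSh$ containing the constants) is what cures it, after which parts \bulletpoint{b} and \bulletpoint{c} follow by careful bookkeeping of the $\kappa$-weights.
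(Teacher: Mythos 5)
Your proof is correct and takes essentially the same route as the paper: the paper's own proof of parts \bulletpoint{a} and \bulletpoint{b} just invokes the $H^1$-stability \eqref{eq:H1_stab_L2_proj} together with ``standard arguments'' (which are exactly the splitting into the gradient and the $\kappa$-weighted $L^2$ contributions, the contraction property, and first-order interpolation estimates that you spell out), and part \bulletpoint{c} is obtained, precisely as you do, by combining \bulletpoint{b} with the regularity bound $\Htwokappa{\testfunFOUR}\lesssim\norm{L^2}{f}$ from Lemma~\ref{lem:wepo_abilmagstab}. Your mean-subtraction and Poincar\'e--Wirtinger refinement in \bulletpoint{a} is a worthwhile extra that makes the gradient stability uniform even as $\kappa\to 0$ (a direct use of \eqref{eq:H1_stab_L2_proj} already suffices when $\kappa\gtrsim 1$, the regime the paper targets), but it does not change the substance of the argument.
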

	\begin{proof}
		Due to \eqref{eq:H1_stab_L2_proj}, standard arguments lead to the bounds on the $L^2$-projection in part \bulletpoint{a} and \bulletpoint{b}.
		Part \bulletpoint{c} is a direct consequence of part \bulletpoint{b} and Lemma~\ref{lem:wepo_abilmagstab}.
	\end{proof}

	In the next lemma, we relate the orthogonal projection, which takes into account the orthogonality to $\ci \sol$ in $\ipsymLtwo{\cdot}{\cdot}$, to the $L^2$-projection.
	
	\begin{lemma} \label{lem:proj_Lagrange}
		For $\kappa \h$ small enough, it  holds the bound
		\begin{equation}
			\Honekappa{ \testfun - \projLtwoisol (\testfun) } 
			\lesssim
			\Honekappa{  \testfun - \Ltwoproj \testfun } ,
			\quad
			\testfun \in \Honeperp.
		\end{equation}
	\end{lemma}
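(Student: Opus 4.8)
The plan is to reduce the statement to a quasi-best-approximation estimate over $\VShperp$ and then to exhibit an explicit competitor built from the $L^2$-projection. First I would record that, since $\abilmagstabsym{\cdot}{\cdot}$ is bounded and coercive on $\HonekappaSpace$ by Lemma~\ref{lem:prop_bil} and $\VShperp\subset\Honeperp$ is closed, the projection $\projLtwoisol$ satisfies Galerkin orthogonality, so C\'ea's lemma gives the quasi-optimality
\[
  \Honekappa{\testfun - \projLtwoisol \testfun} \le \frac{\Cbnd}{\Ccoer}\,\inf_{\testfunh \in \VShperp} \Honekappa{\testfun - \testfunh}, \qquad \testfun \in \Honeperp.
\]
Hence it suffices to produce a single element of $\VShperp$ whose $\HonekappaSpace$-distance to $\testfun$ is controlled by $\Honekappa{\testfun - \Ltwoproj \testfun}$.

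The natural candidate $\Ltwoproj \testfun$ lies in $\VSh$ but in general not in $\VShperp$, since it need not be $L^2$-orthogonal to $\ci\sol$. I would therefore correct it along the discrete direction $\Ltwoproj(\ci\sol)$, setting
\[
  w_\h := \Ltwoproj \testfun - \frac{\ipsymLtwo{\Ltwoproj \testfun}{\ci \sol}}{\norm{L^2}{\Ltwoproj(\ci\sol)}^2}\,\Ltwoproj(\ci\sol) \in \VSh,
\]
which satisfies $\ipsymLtwo{w_\h}{\ci\sol}=0$ by construction, i.e.\ $w_\h\in\VShperp$. Here I use that $\norm{L^2}{\Ltwoproj(\ci\sol)}^2 \ge \tfrac12\norm{L^2}{\sol}^2>0$ for $\kappa\h$ small, which follows from Pythagoras together with the standard first-order estimate $\norm{L^2}{\ci\sol - \Ltwoproj(\ci\sol)} \lesssim \h\,\norm{H^1}{\sol} \lesssim \kappa\h$ and the bound $\Honekappa{\sol}\lesssim\kappa$ from Theorem~\ref{thm:cont_minimizer}. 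By the triangle inequality,
\[
  \Honekappa{\testfun - w_\h} \le \Honekappa{\testfun - \Ltwoproj \testfun} + \frac{|\ipsymLtwo{\Ltwoproj \testfun}{\ci\sol}|}{\norm{L^2}{\Ltwoproj(\ci\sol)}^2}\,\Honekappa{\Ltwoproj(\ci\sol)},
\]
so everything reduces to estimating the correction term.

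The crucial observation is a double-orthogonality cancellation in the numerator: since $\testfun\in\Honeperp$ gives $\ipsymLtwo{\testfun}{\ci\sol}=0$ and $\testfun - \Ltwoproj\testfun$ is $L^2$-orthogonal to $\VSh\ni\Ltwoproj(\ci\sol)$, one obtains
\[
  \ipsymLtwo{\Ltwoproj \testfun}{\ci\sol} = -\,\ipsymLtwo{\testfun - \Ltwoproj \testfun}{\ci\sol - \Ltwoproj(\ci\sol)},
\]
a product of two first-order quantities. Bounding it by Cauchy--Schwarz, using $\norm{L^2}{\testfun - \Ltwoproj\testfun}\le\kappa^{-1}\Honekappa{\testfun - \Ltwoproj\testfun}$ and $\norm{L^2}{\ci\sol - \Ltwoproj(\ci\sol)}\lesssim\kappa\h$, gives $|\ipsymLtwo{\Ltwoproj\testfun}{\ci\sol}|\lesssim \h\,\Honekappa{\testfun - \Ltwoproj\testfun}$. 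Combined with the denominator lower bound and $\Honekappa{\Ltwoproj(\ci\sol)}\lesssim\Honekappa{\sol}\lesssim\kappa$ (from Lemma~\ref{lem:proj}(a) and Theorem~\ref{thm:cont_minimizer}), the correction term is $\lesssim \kappa\h\,\Honekappa{\testfun - \Ltwoproj\testfun}$, so for $\kappa\h$ small it is absorbed and $\Honekappa{\testfun - w_\h}\lesssim\Honekappa{\testfun - \Ltwoproj\testfun}$; together with the C\'ea estimate this proves the claim. I expect the main obstacle to be keeping the constant independent of $\kappa$: this hinges on the exact cancellation of powers of $\kappa$ (the factor $\kappa^{-1}$ from $\norm{L^2}{\testfun-\Ltwoproj\testfun}$ against the $\kappa$ from $\norm{H^1}{\sol}$, with the remaining $\kappa$ from $\Honekappa{\Ltwoproj(\ci\sol)}$ pairing with $\h$) and on a lower bound $\norm{L^2}{\sol}\gtrsim 1$ for the nontrivial minimizer, which is precisely what turns the requirement into the stated smallness condition on $\kappa\h$.
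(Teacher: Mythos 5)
Your proposal is correct and is essentially the paper's own proof: both reduce the claim to quasi-optimality of $\projLtwoisol$ (C\'ea's lemma) and use the identical competitor, since your normalization $\norm{L^2}{\Ltwoproj(\ci\sol)}^2$ equals the paper's $\ipsymLtwo{\Ltwoproj(\ci\sol)}{\ci\sol}$ by the $L^2$-orthogonality of $\Ltwoproj$, and both conclude via $\norm{L^2}{\ci\sol - \Ltwoproj(\ci\sol)}\lesssim \kappa\h$ together with the smallness of $\kappa\h$. Your double-orthogonality refinement (pairing $\testfun-\Ltwoproj\testfun$ with $\ci\sol-\Ltwoproj(\ci\sol)$ instead of with $\ci\sol$) merely sharpens the correction term from $O(1)\,\Honekappa{\testfun-\Ltwoproj\testfun}$ to $O(\kappa\h)\,\Honekappa{\testfun-\Ltwoproj\testfun}$, and the lower bound $\norm{L^2}{\sol}\gtrsim 1$ that you explicitly flag is the same requirement hidden in the paper's constant $c$ in the factor $\kappa/(1-c\kappa\h)$.
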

	
	\begin{proof}
		For $\testfunh \in \VSh$ we let $\Ltwotwist: \VSh \rightarrow \VShperp$
		be the mapping that adjusts the angle to $\ci \sol$ via
		\begin{align}
			\Ltwotwist(\testfunh) \coloneqq 
			\testfunh - 
			\frac{  \ipsymLtwo{\testfunh}{\ci \sol}}{
				\ipsymLtwo{\Ltwoproj(\ci \sol)}{\ci \sol} }
			\Ltwoproj(\ci \sol).
		\end{align}
		We this we obtain for any $\testfun \in \Honeperp$
		\begin{align}
			& \, \Honekappa{ \testfun - \projLtwoisol (\testfun) } 
			\\
			&\lesssim \Honekappa{  \testfun - (\Ltwotwist \circ \Ltwoproj  )\testfun } 
			\le  \Honekappa{  \testfun - \Ltwoproj \testfun } 
			+ 
			\frac{\ipsymLtwo{\Ltwoproj \testfun - \testfun}{\ci \sol}}{\ipsymLtwo{\Ltwoproj(\ci \sol)}{\ci \sol}}
			\Honekappa{ \Ltwoproj(\ci \sol) } 
			\\
			&\lesssim \Honekappa{  \testfun - \Ltwoproj \testfun } 
			+ 
			\norm{L^2}{ \testfun - \Ltwoproj \testfun }
			\frac{ \| \sol \|_{L^2} }{
				\ipsymLtwo{\Ltwoproj(\ci \sol) - \ci \sol}{\ci \sol}
				+  \norm{L^2}{  \sol }^2  }  \Honekappa{ \sol } 
			\\
			&\lesssim  \Honekappa{  \testfun - \Ltwoproj \testfun } + \frac{\kappa}{1 -  c \kappa \h} \norm{L^2}{  \testfun - \Ltwoproj \testfun }
			\lesssim \Honekappa{  \testfun - \Ltwoproj \testfun },
		\end{align}
		where we used in the last step that $\norm{L^2}{\Ltwoproj(\ci \sol) - \ci \sol} \lesssim h \norm{H^1}{\sol} \lesssim \kappa h$ holds. 
	\end{proof}
	
	These preparations lead to the error bounds for our first application.
	
	\begin{proof}[Proof of Corollary~\ref{cor:main_Lagrange}]
		From Lemmas~\ref{lem:proj} and \ref{lem:proj_Lagrange}, we obtain that Assumption~\ref{ass:FEM_space} holds,
		and thus we can use the bounds in Theorem~\ref{thm:main}.
		In addition, we recall that $\Omega$ is assumed to be convex, and, hence, the approximation estimates
		due to
		Lemmas~\ref{lem:proj} and \ref{lem:proj_Lagrange} yield
		\begin{equation}
			\Honekappa{\sol - \projLtwoisol \sol}
			\lesssim 
			\h 
			\Htwokappa{\sol}
			\lesssim \kappa^2 \h ,
		\end{equation}
		where we used Theorem~\ref{thm:cont_minimizer} for the last step.
		This establishes the claims.
	\end{proof}

\section{Relaxed $\kappa$-dependencies in LOD spaces}
\label{sec:lod}
In this final section we present a nonstandard application of the abstract approximation result in Theorem~\ref{thm:main}. For that we consider spaces based on the so-called Localized Orthogonal Decomposition (LOD).  LOD spaces were originally developed in the context of elliptic multiscale problems with rough coefficients to efficiently handle low regularity and unresolved scales \cite{MaP14}. An introduction to the methodology is given in the textbook by M{\aa}lqvist and Peterseim \cite{MaP21} and the review article by Altmann et al. \cite{AHP21acta}. Recently, new applications of these spaces emerged in the field of quantum mechanics where they were used to boost the performance of traditional discretizations \cite{HePer21,HeW22,WuZh22}. As we will see see, the Ginzburg-Landau equation could be yet another promising application of LOD spaces in the context of quantum physics.

To define suitable LOD spaces for the GLE and to characterize its approximation properties in an abstract way, we start from a linear Lagrange finite element space $\VSh$ as defined in \eqref{eq:defVh} and  assume that the underlying triangulation $\Th$ is shape-regular and quasi-uniform. The LOD space is now constructed from $\VSh$ by applying the inverse of a differential operator to the functions of $\VSh$. In our case, we use the differential operator associated with the bilinear form $\abilmagstabsym{\cdot,\cdot}$. The construction is made precise in the following definition.
\begin{definition}[LOD spaces]
	\label{definition:LODspace}
	Let $\abilmagstabsym{\cdot}{\cdot}$ denote the symmetric, continuous and coercive bilinear form on $H^1(\Omega,\mathbb{C})$ given by \eqref{eq:def_forms_abil} and
	let $\hat{\mathcal{A}}_{\kappa}^{-1}$ denote the corresponding solution operator on $L^2$, i.e., for $f \in L^2(\Omega,\C)$ the image $\hat{\mathcal{A}}_{\kappa}^{-1} f \in H^1$ is given by the solution to
	\begin{align}
		\abilmagstabsym{ \,\hat{\mathcal{A}}_{\kappa}^{-1}f \, }{ \, \testfun \, } = \ipsymLtwo{f }{ \testfun} \qquad \mbox{ for all  }  \testfun \in H^1.
	\end{align}
	With this definition, the LOD space based on $\abilmagstabsym{\cdot}{\cdot}$ and $\VSh$ is given by
	\begin{align}
		\VShLOD := \hat{\mathcal{A}}_{\kappa}^{-1} \VSh.
	\end{align}
\end{definition}
We note that the above definition of LOD spaces formally differs from the construction given in the classical references \cite{MaP14,HeP13,HeM14}. However, the characterizations are indeed equivalent as can be extracted from e.g., \cite{HaP23} and \cite{AHP21acta}.

From a practical perspective it is also important to note that the space $\VShLOD$ admits a quasi-local basis, i.e., basis functions that are (super-)exponentially decaying in distances of the mesh size $h$. Details on the practical computation/approximation of such basis functions are given in \cite{EHMP19} and recent super-localization strategies are presented in \cite{HaP23}. Corresponding numerical errors that might arise from the approximation of basis functions are well understood \cite{AHP21acta} and will be for brevity disregarded in the following error analysis.

The approximation properties of the idealized space $\VShLOD$ are summarized in the following proposition.
\begin{proposition}[Approximation properties of $\VShLOD$]
	\label{proposition:LOD-abstract-est}
	Let $\VShLOD$ be the LOD-space from Definition \ref{definition:LODspace} and let $f \in L^2$ be given. 
	If $\sol \in H^1$ denotes the solution to
	$$
	\abilmagstabsym{\sol}{\testfun} = \ipsymLtwo{f }{ \testfun} \qquad \mbox{ for all  }  \testfun \in H^1
	$$
	and if $\RitzprojLOD \sol \in \VShLOD$ denotes the corresponding $\abilmagstabsym{\cdot}{\cdot}$-Ritz-projection of $\sol$ in $\VShLOD$, then it holds
	\begin{align}
		\label{eqn:abstract-estimate-LOD}
		\Honekappa{ \sol - \RitzprojLOD \sol} \, \lesssim\, h \, \norm{L^2}{f - \Ltwoproj f},
	\end{align}
	where we recall $\Ltwoproj : L^2 \rightarrow \VSh$ as the $L^2$-projection on $\VSh$.
	The hidden constant in \eqref{eqn:abstract-estimate-LOD} is generic and depends on the coercivity and continuity constants of $\abilmagstabsym{\cdot}{\cdot}$, as well as the mesh regularity, but it does not depend on $h$ and $\kappa$.
	
	Furthermore, for every $\phi \in H^1$ there exists a unique decomposition such that
	\begin{align}
		\label{equation:LODdecomposition}
		\phi = \phi^{\LOD} + \phi_0,
		 \quad \mbox{where } \,\,
		 \phi^{\LOD}\in\VShLOD,\quad \Ltwoproj \phi_0 = 0
		\quad 
		\mbox{ and } 
		\quad 
		\abilmagstabsym{\phi^{\LOD}}{\phi_0} = 0.
	\end{align}
\end{proposition}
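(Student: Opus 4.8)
The plan is to first expose the $\abilmagstabsym{\cdot}{\cdot}$-orthogonal structure underlying $\VShLOD$ and then read off both assertions from it. Let $W := \{ w \in H^1 : \Ltwoproj w = 0 \}$ denote the kernel of the $L^2$-projection. The key observation is that $\VShLOD$ is precisely the $\abilmagstabsym{\cdot}{\cdot}$-orthogonal complement of $W$ in $H^1$: for any $g \in \VSh$ and any $w \in W$, the defining identity of $\hat{\mathcal{A}}_{\kappa}^{-1}$ gives
\begin{equation*}
	\abilmagstabsym{\hat{\mathcal{A}}_{\kappa}^{-1} g}{w} = \ipsymLtwo{g}{w} = 0,
\end{equation*}
where the last equality uses $g \in \VSh$ and $\Ltwoproj w = 0$ together with the symmetry of $\ipsymLtwo{\cdot}{\cdot}$. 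I would also record that $\VShLOD \cap W = \{0\}$: any $v$ in the intersection satisfies $\abilmagstabsym{v}{v} = 0$ by the orthogonality just shown, hence $v = 0$ by coercivity (Lemma~\ref{lem:prop_bil}).

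For the decomposition \eqref{equation:LODdecomposition}, I would define $\phi^{\LOD} \in \VShLOD$ as the $\abilmagstabsym{\cdot}{\cdot}$-Ritz projection of $\phi$ onto the finite-dimensional, hence closed, space $\VShLOD$, which exists uniquely by coercivity, and set $\phi_0 := \phi - \phi^{\LOD}$. The Galerkin orthogonality $\abilmagstabsym{\phi_0}{v^{\LOD}} = 0$ for all $v^{\LOD} \in \VShLOD$ immediately yields $\abilmagstabsym{\phi^{\LOD}}{\phi_0} = 0$. Testing this orthogonality against $v^{\LOD} = \hat{\mathcal{A}}_{\kappa}^{-1} \testfunh$ for arbitrary $\testfunh \in \VSh$ gives $\ipsymLtwo{\testfunh}{\phi_0} = \abilmagstabsym{\hat{\mathcal{A}}_{\kappa}^{-1}\testfunh}{\phi_0} = 0$, so $\Ltwoproj \phi_0 = 0$. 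Uniqueness follows because the difference of any two such decompositions lies in $\VShLOD \cap W = \{0\}$.

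For the estimate \eqref{eqn:abstract-estimate-LOD}, I would set $e := \sol - \RitzprojLOD \sol$. Exactly as above, testing the Galerkin orthogonality of the Ritz projection against $\hat{\mathcal{A}}_{\kappa}^{-1}\testfunh$ shows $\Ltwoproj e = 0$, i.e.\ $e \in W$. Using coercivity, the Galerkin orthogonality (which makes $\abilmagstabsym{\RitzprojLOD\sol}{e}=0$), and the equation for $\sol$,
\begin{equation*}
	\Ccoer \Honekappa{e}^2 \leq \abilmagstabsym{e}{e} = \abilmagstabsym{\sol}{e} = \ipsymLtwo{f}{e}.
\end{equation*}
Now I exploit that $\Ltwoproj$ is self-adjoint for $\ipsymLtwo{\cdot}{\cdot}$ together with $\Ltwoproj e = 0$, so $\ipsymLtwo{\Ltwoproj f}{e} = \ipsymLtwo{f}{\Ltwoproj e} = 0$ and hence $\ipsymLtwo{f}{e} = \ipsymLtwo{f - \Ltwoproj f}{e}$. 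The Cauchy--Schwarz inequality and the standard $L^2$-approximation estimate $\norm{L^2}{e} = \norm{L^2}{e - \Ltwoproj e} \lesssim h \norm{L^2}{\nabla e} \leq h \Honekappa{e}$, valid since $\Ltwoproj e = 0$ and $\Th$ is quasi-uniform, then give
\begin{equation*}
	\Honekappa{e}^2 \lesssim \norm{L^2}{f - \Ltwoproj f}\, \norm{L^2}{e} \lesssim h\, \norm{L^2}{f - \Ltwoproj f}\, \Honekappa{e},
\end{equation*}
and dividing by $\Honekappa{e}$ yields \eqref{eqn:abstract-estimate-LOD}.

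The step I expect to be delicate is the double use of the $\abilmagstabsym{\cdot}{\cdot}$-orthogonality: once to identify the Ritz error $e$ as an element of $\ker\Ltwoproj$, which furnishes the factor $h$ through the $L^2$-approximation of the projection, and once, in combination with the self-adjointness of $\Ltwoproj$, to replace $f$ by $f - \Ltwoproj f$ on the right-hand side. Ensuring the constant genuinely decouples from $\kappa$ relies only on the $\kappa$-independent constants of Lemma~\ref{lem:prop_bil} and the $\kappa$-independent $L^2$-approximation constant, so that no hidden $\kappa$-weight enters through the norm equivalences.
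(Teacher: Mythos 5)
Your proof is correct. Note that the paper itself does not prove this proposition at all: it declares the result standard and refers to \cite{HeW22} (homogeneous Dirichlet case) and \cite{Mai21}. What you have written is essentially the standard ``ideal LOD'' argument from that literature, correctly transplanted to the present setting (real inner product $\ipsymLtwo{\cdot}{\cdot}$, natural boundary conditions, $\kappa$-weighted norms): you identify $\VShLOD$ as $\abilmagstabsym{\cdot}{\cdot}$-orthogonal to $W=\ker \Ltwoproj$ via the defining property of $\hat{\mathcal{A}}_{\kappa}^{-1}$, obtain the splitting $H^1=\VShLOD\oplus W$ from the Ritz projection onto the finite-dimensional space $\VShLOD$, and then combine $\Ltwoproj e=0$ with the self-adjointness of $\Ltwoproj$ to insert $f-\Ltwoproj f$ and extract the factor $h$ from the $L^2$-approximation property of $\Ltwoproj$. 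All steps check out, and the $\kappa$-independence of the constant is correctly traced back to Lemma~\ref{lem:prop_bil} and the mesh-regularity-only constant of $\Ltwoproj$. Two cosmetic remarks: you claim $\VShLOD$ is \emph{precisely} the orthogonal complement of $W$ but only prove (and only need) the inclusion $\VShLOD\subseteq W^{\perp_{\hat a_\kappa}}$; the reverse inclusion would follow a posteriori from your decomposition, so either weaken the wording or add that one line. Also, the division by $\Honekappa{e}$ at the end should be guarded by the trivial case $e=0$. Compared with simply citing the references, your route has the advantage of making explicit that nothing in the argument hinges on Dirichlet boundary conditions or on complex-linearity of the forms, which is genuinely worth recording in this setting.
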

The result is standard and can be for instance found in \cite{HeW22} for homogeneous Dirichlet boundary conditions. For generalizations to higher order FE spaces and to only piecewise smooth source terms $f$, we refer to \cite{Mai21}.

To apply the general error estimates in Theorem \ref{thm:main}, we need to verify Assumption \ref{ass:FEM_space} for the LOD space $\VShLOD$.
Analogously, to standard Lagrange finite elements, it is also possible to quantify the approximation properties of $\RitzprojLOD $ for general smooth functions. This is done in the following lemma.
\begin{lemma}
	\label{lemma:RitzLOD-properties}
	Let $\VShLOD$ be the LOD-space from Definition \ref{definition:LODspace} and let the corresponding Ritz-projection w.r.t. $\abilmagstabsym{\cdot}{\cdot}$ be denoted by $\RitzprojLOD: H^1 \rightarrow \VShLOD$.
	Then, for every $\testfunTHREE \in H^2$ with $\nabla \testfunTHREE \cdot \nu =0$, there is a $f_\testfunTHREE \in L^2$ such that
	\begin{equation}
		\abilmagstabsym{\testfunTHREE}{\testfun} = \ipsymLtwo{f_\testfunTHREE}{\testfun} \quad \mbox{and} \quad \norm{L^2}{f_\testfunTHREE} \lesssim \Htwokappa{\testfunTHREE}.
	\end{equation}
	Consequently, for all $\testfunTHREE \in H^2$  with $\nabla \testfunTHREE \cdot \nu =0$ it holds
	\begin{align}
		\Honekappa{\testfunTHREE - \RitzprojLOD \testfunTHREE}  \lesssim \h \Htwokappa{\testfunTHREE}.
	\end{align}
\end{lemma}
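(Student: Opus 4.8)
The plan is to prove the two claims in sequence, with the first one (the $L^2$-representation of $\abilmagstabsym{\testfunTHREE}{\cdot}$) carrying essentially all of the work, and the second following immediately from the abstract LOD estimate in Proposition~\ref{proposition:LOD-abstract-est}. To obtain the representation, I would start from the definition \eqref{eq:def_abilmagstabsym} of $\abilmagstabsym{\cdot}{\cdot}$, expand the product $(\nabla\testfunTHREE + \ci\kappa\MagF\testfunTHREE)\cdot(\nabla\testfun + \ci\kappa\MagF\testfun)^*$ using that $\MagF$ and $\kappa$ are real-valued, and then integrate by parts so as to move every derivative off the test function $\testfun$. The candidate right-hand side is
\begin{equation*}
	f_\testfunTHREE = -\Delta\testfunTHREE - 2\ci\kappa\,(\MagF\cdot\nabla\testfunTHREE) + (\kappa^2|\MagF|^2 + \stabPar^2)\,\testfunTHREE,
\end{equation*}
so that the identity $\abilmagstabsym{\testfunTHREE}{\testfun} = \ipsymLtwo{f_\testfunTHREE}{\testfun}$ holds for all $\testfun\in H^1$.

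The integration by parts is where the hypotheses enter and is the step needing the most care. For the principal part $\Real\int_\Omega\nabla\testfunTHREE\cdot\nabla\testfun^*$, Green's formula produces a boundary contribution $\Real\int_{\partial\Omega}(\nabla\testfunTHREE\cdot\nu)\testfun^*$, which vanishes precisely because $\nabla\testfunTHREE\cdot\nu = 0$; the regularity $\testfunTHREE\in H^2$ guarantees $\Delta\testfunTHREE\in L^2$ and makes the boundary trace meaningful. The two first-order cross terms, $-\ci\kappa\int_\Omega(\MagF\cdot\nabla\testfunTHREE)\testfun^*$ and $+\ci\kappa\int_\Omega\testfunTHREE\,(\MagF\cdot\nabla\testfun^*)$, are reconciled through the identity $\int_\Omega\testfunTHREE\,\MagF\cdot\nabla\testfun^* = -\int_\Omega(\MagF\cdot\nabla\testfunTHREE)\testfun^*$, which is exactly the weak form of $\div(\testfunTHREE\MagF) = \MagF\cdot\nabla\testfunTHREE$ combined with vanishing boundary flux; here both $\div\MagF = 0$ and $\MagF\cdot\nu = 0$ come from \eqref{eq:ass_\MagF_for_H2}. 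The two cross terms therefore collapse to $-2\ci\kappa\int_\Omega(\MagF\cdot\nabla\testfunTHREE)\testfun^*$, while the zeroth-order term $\kappa^2|\MagF|^2\testfunTHREE$ from the expansion and the stabilization $\stabPar^2\testfunTHREE$ pass through untouched, which yields the stated $f_\testfunTHREE$.

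It then remains to bound $\norm{L^2}{f_\testfunTHREE}$ by $\Htwokappa{\testfunTHREE}$, which is pure $\kappa$-bookkeeping against the norm \eqref{eq:def_norms}. I would estimate term by term: first $\norm{L^2}{\Delta\testfunTHREE}\lesssim\norm{H^2}{\testfunTHREE}\le\Htwokappa{\testfunTHREE}$; next $\norm{L^2}{2\kappa\,\MagF\cdot\nabla\testfunTHREE}\lesssim\kappa\MagFinfty\norm{L^2}{\nabla\testfunTHREE}\lesssim\kappa\norm{L^2}{\nabla\testfunTHREE}\le\kappa\Honekappa{\testfunTHREE}\le\Htwokappa{\testfunTHREE}$, using $\norm{L^2}{\nabla\testfunTHREE}\le\Honekappa{\testfunTHREE}$ and $\kappa\Honekappa{\testfunTHREE}\le\Htwokappa{\testfunTHREE}$; and finally, since $\stabPar^2 = \kappa^2(\MagFinfty^2+1)$ gives $\kappa^2|\MagF|^2 + \stabPar^2\le\kappa^2(2\MagFinfty^2+1)\lesssim\kappa^2$, one gets $\norm{L^2}{(\kappa^2|\MagF|^2+\stabPar^2)\testfunTHREE}\lesssim\kappa^2\norm{L^2}{\testfunTHREE}\le\kappa\Honekappa{\testfunTHREE}\le\Htwokappa{\testfunTHREE}$. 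Summing the three bounds proves $\norm{L^2}{f_\testfunTHREE}\lesssim\Htwokappa{\testfunTHREE}$.

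For the consequence, I would apply Proposition~\ref{proposition:LOD-abstract-est} with source $f = f_\testfunTHREE$: since the first part shows that $\testfunTHREE$ solves $\abilmagstabsym{\testfunTHREE}{\testfun} = \ipsymLtwo{f_\testfunTHREE}{\testfun}$ for all $\testfun\in H^1$, its $\abilmagstabsym{\cdot}{\cdot}$-Ritz projection satisfies $\Honekappa{\testfunTHREE - \RitzprojLOD\testfunTHREE}\lesssim\h\,\norm{L^2}{f_\testfunTHREE - \Ltwoproj f_\testfunTHREE}$. Bounding the $L^2$-projection error trivially by $\norm{L^2}{f_\testfunTHREE - \Ltwoproj f_\testfunTHREE}\le\norm{L^2}{f_\testfunTHREE}\lesssim\Htwokappa{\testfunTHREE}$ then closes the estimate. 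The only genuine obstacle is the first integration-by-parts step, where one must confirm that \emph{both} boundary terms vanish and that the first-order cross terms cancel down to a single one via the divergence-free and tangency conditions on $\MagF$; once $f_\testfunTHREE$ is identified, everything that follows is routine norm bookkeeping and an invocation of the abstract LOD result.
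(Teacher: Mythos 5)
Your proposal is correct and takes essentially the same approach as the paper: expand $\abilmagstabsym{\cdot}{\cdot}$ using $\div \MagF = 0$ and $\MagF\cdot\nu = 0$ (the paper simply reuses \eqref{eq:expansion_ahat} for this), integrate the principal part by parts with the boundary term killed by $\nabla\testfunTHREE\cdot\nu = 0$ to identify $f_\testfunTHREE$, bound $\norm{L^2}{f_\testfunTHREE}$ term by term against $\Htwokappa{\testfunTHREE}$, and invoke Proposition~\ref{proposition:LOD-abstract-est} together with $\norm{L^2}{f_\testfunTHREE - \Ltwoproj f_\testfunTHREE} \le \norm{L^2}{f_\testfunTHREE}$. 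Your sign on the cross term, $-2\ci\kappa\,\MagF\cdot\nabla\testfunTHREE$, agrees with \eqref{eq:expansion_ahat} (the paper's displayed proof of this lemma carries a harmless sign typo there), and all your $\kappa$-bookkeeping is accurate.
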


\begin{proof}
	From \eqref{eq:expansion_ahat}, we obtain using integration by parts
	\begin{equation}
		\abilmagstabsym{\testfunTHREE}{\testfun} 
		=  	  
		\Real \int_{\Omega}
		\bigl( - \Delta \testfunTHREE + 
		\stabPar^2 \testfunTHREE  
		+ 2 \ci \kappa \MagF \nabla \testfunTHREE 
		+ \kappa^2 |\MagF|^2 \testfunTHREE  \bigr) \testfun^*
		\,dx  \eqqcolon \ipsymLtwo{f_\testfunTHREE}{\testfun}
	\end{equation}
	and the bound for $\norm{L^2}{f_\testfunTHREE}$ follows. Proposition \ref{proposition:LOD-abstract-est} finishes the second part of the lemma.
\end{proof}

As the set of $H^2$-functions with a vanishing normal derivative on $\partial \Omega$ (for a polygonal Lipschitz domain) is dense in $H^1$ (cf. \cite{Droniou}), this lemma establishes property \bulletpoint{a} in Assumption~\ref{ass:FEM_space}.
For the second property, we need a variant of this result given in the next two lemmas. First, we give an estimate on the (standard) Ritz projection.

\begin{lemma}
	\label{lemma:RitzLOD-properties_v2}
	Let $\VShLOD$ be the LOD-space from Definition~$\ref{definition:LODspace}$ with corresponding Ritz-projection w.r.t. $\abilmagstabsym{\cdot}{\cdot}$ given by $\RitzprojLOD \colon H^1 \rightarrow \VShLOD$.
	For $f \in L^2$ let $\testfunTHREE \in \Honeperp$ be the solution of
	\begin{equation}
		\abilmagstabsym{\testfunTHREE}{\testfun} = \ipsymLtwo{f}{\testfun} \quad \mbox{for all } \quad \testfun \in \Honeperp.
	\end{equation}
	Then, it holds
	\begin{align}
		\Honekappa{\testfunTHREE - \RitzprojLOD \testfunTHREE}  \lesssim \h \norm{L^2}{f}.
	\end{align}
\end{lemma}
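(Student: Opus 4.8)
The plan is to reduce the constrained problem on $\Honeperp$ to an unconstrained problem on all of $H^1$, for which the abstract LOD estimate \eqref{eqn:abstract-estimate-LOD} in Proposition~\ref{proposition:LOD-abstract-est} applies verbatim. The difficulty is that $\testfunTHREE$ only satisfies the variational identity against test functions in $\Honeperp$, whereas both the Ritz projection $\RitzprojLOD$ and Proposition~\ref{proposition:LOD-abstract-est} are stated for the problem tested against the full space $H^1$. Bridging this gap is the sole nontrivial step.

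First I would extend the data to the whole space exactly as in the proof of Lemma~\ref{lem:wepo_abilmagstab}. Decomposing an arbitrary $\testfun \in H^1$ as $\testfun = \widehat{\testfun} + \alpha\,(\ci \sol)$ with $\widehat{\testfun} \in \Honeperp$ and $\alpha = \ipsymLtwo{\testfun}{\ci \sol}\,\norm{L^2}{\sol}^{-2}$, and controlling the extra term $\abilmag{\testfunTHREE}{\ci \sol}$ via $\energy'(\ci \sol) = 0$ together with the $\HonekappaSpace$-bound on $\testfunTHREE$, one obtains --- precisely as in \eqref{eq:var_form_extended_test_fun} --- a source $f_\testfunTHREE \in L^2$ with $\norm{L^2}{f_\testfunTHREE} \lesssim \norm{L^2}{f}$ such that
\[
\abilmagstabsym{\testfunTHREE}{\testfun} = \ipsymLtwo{f_\testfunTHREE}{\testfun} \qquad \text{for all } \testfun \in H^1 .
\]
Thus $\testfunTHREE$ is the unconstrained solution of the stabilized problem with right-hand side $f_\testfunTHREE$, and its LOD Ritz projection coincides with the one in the statement.

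With this reformulation the claim follows immediately from Proposition~\ref{proposition:LOD-abstract-est} applied with source $f_\testfunTHREE$. Bounding the projection defect trivially by $\norm{L^2}{f_\testfunTHREE - \Ltwoproj f_\testfunTHREE} \le \norm{L^2}{f_\testfunTHREE}$ gives
\[
\Honekappa{\testfunTHREE - \RitzprojLOD \testfunTHREE} \;\lesssim\; \h\,\norm{L^2}{f_\testfunTHREE - \Ltwoproj f_\testfunTHREE} \;\le\; \h\,\norm{L^2}{f_\testfunTHREE} \;\lesssim\; \h\,\norm{L^2}{f},
\]
as asserted. Since all constants entering the extension step in Lemma~\ref{lem:wepo_abilmagstab} and the estimate in Proposition~\ref{proposition:LOD-abstract-est} are independent of $\h$ and $\kappa$, the final bound inherits the required uniformity, with the source-extension being the only genuine obstacle.
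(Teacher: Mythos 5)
Your proposal is correct and follows essentially the same route as the paper's own proof: extend the right-hand side to a source $f_\testfunTHREE \in L^2$ valid against all of $H^1$ exactly as in \eqref{eq:var_form_extended_test_fun}, then invoke Proposition~\ref{proposition:LOD-abstract-est} and bound $\norm{L^2}{f_\testfunTHREE - \Ltwoproj f_\testfunTHREE} \le \norm{L^2}{f_\testfunTHREE} \lesssim \norm{L^2}{f}$. The only difference is that you spell out the final chain of estimates, which the paper leaves implicit.
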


\begin{proof}
	As in the proof of Lemma~\ref{lem:wepo_abilmagstab} in \eqref{eq:var_form_extended_test_fun}, we know that $\testfunTHREE$ solves the variational problem also tested against all $\testfun \in H^1$ for some modification of $f$ which is bounded in $L^2$ by $\norm{L^2}{f}$. Hence, the assertion follows from
	Proposition~\ref{proposition:LOD-abstract-est}.
\end{proof}

However, for property (b) in Assumption~\ref{ass:FEM_space}, we 
further need a bound on the Ritz projection which preserves the orthogonality with respect to $\ci \sol$. 
The following lemma shows that one can reduce these error bounds
to those established in Lemma~\ref{lemma:RitzLOD-properties_v2}.

\begin{lemma}
	\label{lemma:ritzprojorth-LOD}
	Let again $\RitzprojLOD: H^1 \rightarrow \VShLOD$ denote the Ritz-projection onto the LOD-space $\VShLOD$ and let
	\begin{align}
		\projLtwoisolLOD : \Honeperp \rightarrow \VShLOD \cap (\ci \sol)^\perp
	\end{align}
	denote the corresponding Ritz-projection onto $\VShLOD \cap (\ci \sol)^\perp$. 
	If $h$ is small enough, in particular $h \lesssim \kappa^{-1}$, then it holds for all $\testfun \in \Honeperp$
	\begin{align}
		\Honekappa{\testfun - \projLtwoisolLOD \testfun}  \lesssim \Honekappa{\testfun - \RitzprojLOD \testfun}. 
	\end{align}
\end{lemma}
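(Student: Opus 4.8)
The plan is to mirror the proof of Lemma~\ref{lem:proj_Lagrange}, replacing the $L^2$-projection $\Ltwoproj$ by the LOD Ritz projection $\RitzprojLOD$ throughout. The operator $\projLtwoisolLOD$ is the $\abilmagstabsym{\cdot}{\cdot}$-orthogonal projection onto the closed subspace $\VShLOD \cap (\ci\sol)^\perp \subset \Honeperp$, so by the coercivity and boundedness of $\abilmagstabsym{\cdot}{\cdot}$ on $\HonekappaSpace$ (Lemma~\ref{lem:prop_bil}) together with C\'ea's lemma it is quasi-optimal,
\begin{equation*}
	\Honekappa{\testfun - \projLtwoisolLOD\testfun} \lesssim \inf_{\testfunTHREE_\h \in \VShLOD \cap (\ci\sol)^\perp} \Honekappa{\testfun - \testfunTHREE_\h}.
\end{equation*}
It therefore suffices to exhibit one competitor in $\VShLOD \cap (\ci\sol)^\perp$ whose distance to $\testfun$ is controlled by $\Honekappa{\testfun - \RitzprojLOD\testfun}$.

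For the competitor I would twist $\RitzprojLOD\testfun$ back onto $(\ci\sol)^\perp$. Since $\RitzprojLOD(\ci\sol)\in\VShLOD$, I set
\begin{equation*}
	\widehat{\testfunTHREE}_\h := \RitzprojLOD\testfun - \frac{\ipsymLtwo{\RitzprojLOD\testfun}{\ci\sol}}{\ipsymLtwo{\RitzprojLOD(\ci\sol)}{\ci\sol}}\,\RitzprojLOD(\ci\sol) \in \VShLOD \cap (\ci\sol)^\perp,
\end{equation*}
which is well defined as soon as $\ipsymLtwo{\RitzprojLOD(\ci\sol)}{\ci\sol}\neq 0$. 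Using $\ipsymLtwo{\testfun}{\ci\sol}=0$ (as $\testfun\in\Honeperp$) to rewrite $\ipsymLtwo{\RitzprojLOD\testfun}{\ci\sol}=\ipsymLtwo{\RitzprojLOD\testfun-\testfun}{\ci\sol}$, the triangle inequality gives
\begin{equation*}
	\Honekappa{\testfun - \widehat{\testfunTHREE}_\h} \leq \Honekappa{\testfun - \RitzprojLOD\testfun} + \frac{|\ipsymLtwo{\RitzprojLOD\testfun-\testfun}{\ci\sol}|}{|\ipsymLtwo{\RitzprojLOD(\ci\sol)}{\ci\sol}|}\,\Honekappa{\RitzprojLOD(\ci\sol)}.
\end{equation*}
Bounding $|\ipsymLtwo{\RitzprojLOD\testfun-\testfun}{\ci\sol}| \leq \norm{L^2}{\RitzprojLOD\testfun-\testfun}\norm{L^2}{\sol} \leq \kappa^{-1}\Honekappa{\testfun-\RitzprojLOD\testfun}\norm{L^2}{\sol}$ together with $\Honekappa{\RitzprojLOD(\ci\sol)} \lesssim \Honekappa{\sol}\lesssim\kappa$ (the $\HonekappaSpace$-stability of $\RitzprojLOD$, which follows from coercivity/boundedness, and Theorem~\ref{thm:cont_minimizer}), the correction term collapses to $\lesssim \Honekappa{\testfun-\RitzprojLOD\testfun}$, provided the denominator $\ipsymLtwo{\RitzprojLOD(\ci\sol)}{\ci\sol}$ stays bounded below by a constant, exactly as in Lemma~\ref{lem:proj_Lagrange}.

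The hard part — and the place where $h\lesssim\kappa^{-1}$ enters — is to establish $\ipsymLtwo{\RitzprojLOD(\ci\sol)}{\ci\sol}\gtrsim \norm{L^2}{\sol}^2 \gtrsim 1$. Writing $\ipsymLtwo{\RitzprojLOD(\ci\sol)}{\ci\sol} = \norm{L^2}{\sol}^2 + \ipsymLtwo{\RitzprojLOD(\ci\sol)-\ci\sol}{\ci\sol}$, I need $\norm{L^2}{\ci\sol - \RitzprojLOD(\ci\sol)}\lesssim\kappa h$, so that the perturbation is of order $\kappa h$ and is absorbed once $\kappa h$ is small. Crucially, I would obtain this \emph{without} invoking $H^2$-regularity of $\sol$, so that the bound survives on the low-regularity (non-convex) domains that are the natural LOD setting: since $\ci\sol$ is again a minimizer, $\energy'(\ci\sol)=0$, and a short computation as in \eqref{eq:def_abilmagstabsym} shows $\abilmagstabsym{\ci\sol}{\testfun}=\ipsymLtwo{g}{\testfun}$ for all $\testfun\in H^1$, with $g=\ci\sol\bigl(\stabPar^2-\kappa^2(|\sol|^2-1)\bigr)$ and $\norm{L^2}{g}\lesssim\kappa^2$ (using $\stabPar^2\lesssim\kappa^2$, $|\sol|\leq1$, $\norm{L^2}{\sol}\lesssim1$). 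The abstract LOD estimate of Proposition~\ref{proposition:LOD-abstract-est} then yields $\Honekappa{\ci\sol-\RitzprojLOD(\ci\sol)}\lesssim h\,\norm{L^2}{g - \Ltwoproj g}\lesssim h\kappa^2$, and dividing by $\kappa$ gives $\norm{L^2}{\ci\sol-\RitzprojLOD(\ci\sol)}\lesssim\kappa h$. Combining the three displays with the quasi-optimality finishes the proof; the only genuine subtlety is this $L^2$-bound on the Ritz error of $\ci\sol$ and the resulting nondegeneracy of the normalising inner product for $\kappa h$ small.
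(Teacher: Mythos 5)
Your proposal is correct, and its skeleton coincides with the paper's proof: both mirror Lemma~\ref{lem:proj_Lagrange}, use quasi-optimality of $\projLtwoisolLOD$ with the twisted competitor $\RitzprojLOD\testfun - \frac{\ipsymLtwo{\RitzprojLOD\testfun}{\ci\sol}}{\ipsymLtwo{\RitzprojLOD(\ci\sol)}{\ci\sol}}\RitzprojLOD(\ci\sol)$, and both reduce everything to the estimate $\norm{L^2}{\ci\sol - \RitzprojLOD(\ci\sol)}\lesssim \kappa\h$, which makes the normalizing denominator nondegenerate once $\kappa\h$ is small (the paper, like you, implicitly uses $\norm{L^2}{\sol}\gtrsim 1$ there). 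The one genuinely different ingredient is how that $L^2$-estimate is obtained. The paper exploits the structural property of LOD spaces recorded in \eqref{equation:LODdecomposition}: the Ritz error $\ci\sol - \RitzprojLOD(\ci\sol)$ lies in the kernel of $\Ltwoproj$, so the approximation property of the $L^2$-projection gives $\norm{L^2}{\ci\sol - \RitzprojLOD(\ci\sol)}\lesssim \h\,\Honekappa{\ci\sol - \RitzprojLOD(\ci\sol)}$, and Ritz stability plus $\Honekappa{\ci\sol}\lesssim\kappa$ finishes. You instead use gauge invariance: $\ci\sol$ is itself a minimizer, so $\energy'(\ci\sol)=0$ identifies $\ci\sol$ as the solution of the stabilized problem with right-hand side $g = \ci\sol\bigl(\stabPar^2 - \kappa^2(|\sol|^2-1)\bigr)$, $\norm{L^2}{g}\lesssim\kappa^2$, whence the abstract LOD estimate \eqref{eqn:abstract-estimate-LOD} gives $\Honekappa{\ci\sol-\RitzprojLOD(\ci\sol)}\lesssim \h\kappa^2$ and hence $\norm{L^2}{\ci\sol-\RitzprojLOD(\ci\sol)}\leq\kappa^{-1}\Honekappa{\ci\sol-\RitzprojLOD(\ci\sol)}\lesssim\kappa\h$. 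This is exactly the mechanism of the paper's Lemma~\ref{lemma:estimate-u-LODspace}, transplanted to $\ci\sol$; it is equally valid, requires no $H^2$-regularity (as you note, important on nonconvex domains — but neither does the paper's argument), and in the regime $\kappa\h\lesssim 1$ it even yields a sharper intermediate $\HonekappaSpace$-bound ($\h\kappa^2$ versus $\kappa$). The paper's route is slightly more economical in that it needs only Ritz stability and the kernel property, without invoking the Euler--Lagrange equation for $\ci\sol$.
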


\begin{proof}
	To proceed as in the proof of Lemma \ref{lem:proj_Lagrange}, we note
	that by the LOD-decomposition \eqref{equation:LODdecomposition} we have $\Ltwoproj\left( \ci \sol - \RitzprojLOD (\ci \sol) \right) = 0$. Hence, with the approximation properties of $\Ltwoproj$:
	\begin{align}
		\label{equation:L2-est-RitzprojLOD}
		\norm{L^2}{ \RitzprojLOD (\ci \sol)  - \ci \sol }
		\lesssim h \, \Honekappa{ \RitzprojLOD (\ci \sol)  - \ci \sol }
		\lesssim h \, \Honekappa{ \ci \sol } \lesssim h \, \kappa.
	\end{align}
	This implies for all $\testfun \in \Honeperp$ 
	\begin{align}
		& \,\Honekappa{ \testfun - \projLtwoisolLOD \testfun } 
		\\
		&\lesssim \Honekappa{  \testfun - \Bigl( \RitzprojLOD \testfun - 
			\frac{  \ipsymLtwo{\RitzprojLOD \testfun}{\ci \sol}}{\ipsymLtwo{\RitzprojLOD(\ci \sol)}{\ci \sol} } \RitzprojLOD(\ci \sol) \Bigr) } \\
		&\le  \Honekappa{  \testfun - \RitzprojLOD \testfun } 
		+ 
		\frac{\ipsymLtwo{\RitzprojLOD \testfun - \testfun}{\ci \sol}}{\ipsymLtwo{\RitzprojLOD(\ci \sol)}{\ci \sol}}
		\Honekappa{ \RitzprojLOD(\ci \sol) } 
		\\
		&\hspace{-4pt}\overset{\eqref{equation:L2-est-RitzprojLOD}}{\lesssim} \Honekappa{  \testfun - \RitzprojLOD \testfun } 
		+ 
		\norm{L^2}{ \testfun - \RitzprojLOD \testfun }
		\frac{ \| \sol \|_{L^2} }{
			\ipsymLtwo{\RitzprojLOD(\ci \sol) - \ci \sol}{\ci \sol}
			+  \norm{L^2}{  \sol }^2  }  \Honekappa{ \sol } 
		\\
		&\lesssim  \Honekappa{  \testfun - \RitzprojLOD \testfun } + \frac{\kappa}{1 -  c \kappa \h} \norm{L^2}{  \testfun - \RitzprojLOD \testfun }
		\lesssim \Honekappa{  \testfun - \RitzprojLOD \testfun }. 
	\end{align}
\end{proof}
Lemma \ref{lemma:ritzprojorth-LOD} together with Theorem \ref{thm:main} guarantees that the $\HonekappaSpace$-error between an exact solution $u$ and a corresponding approximation in the LOD-space is bounded by the projection error $\Honekappa{ \sol - \RitzprojLOD \sol}$. The next lemma quantifies this error.

\begin{lemma}
	\label{lemma:estimate-u-LODspace}
	Let $\sol$ be a minimizer of \eqref{eq:energy_functional_times_kappa2} and let $\RitzprojLOD: H^1 \rightarrow \VShLOD$ be the Ritz-projection onto $\VShLOD$. Then it holds at least
	\begin{align}
		\Honekappa{ \sol - \RitzprojLOD \sol}  \lesssim \kappa^{3}\, \h^{2} 
	\end{align}
	and, if $\Omega$ is convex, we have $u\in H^2$ and the estimate improves to
	\begin{align}
		\Honekappa{ \sol - \RitzprojLOD \sol}  \lesssim \kappa^{4}\, \h^{3}. 
	\end{align}
\end{lemma}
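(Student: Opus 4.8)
The plan is to reduce everything to the abstract LOD estimate \eqref{eqn:abstract-estimate-LOD} and to exploit that its right-hand side is the $L^2$-projection error of a \emph{smooth} source term. First I would rewrite the minimizer $\sol$ as the solution of a stabilized linear problem. Starting from $\energy'(\sol)=0$ and the representation \eqref{eq:first_Frechet}, one has $\abilmag{\sol}{\testfun} = \kappa^2 \Real\int_\Omega (1-|\sol|^2)\sol\,\testfun^*\,dx$ for all $\testfun\in H^1$, and after adding $\stabPar^2\ipsymLtwo{\sol}{\testfun}$ this becomes
\begin{align}
\abilmagstabsym{\sol}{\testfun} = \ipsymLtwo{f}{\testfun} \quad \text{for all } \testfun\in H^1, \qquad f := \kappa^2(1-|\sol|^2)\sol + \stabPar^2\sol.
\end{align}
Since $|\sol|\leq1$ by Theorem~\ref{thm:cont_minimizer}, we have $f\in L^2$, so Proposition~\ref{proposition:LOD-abstract-est} applies and yields $\Honekappa{\sol - \RitzprojLOD\sol} \lesssim h\,\norm{L^2}{f - \Ltwoproj f}$. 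The whole game is thus to measure the smoothness of $f$ and turn it into powers of $h$ through the best-approximation property of $\Ltwoproj$; this is where the LOD method gains an extra order in $h$ compared to the crude bound $\norm{L^2}{f-\Ltwoproj f}\le\norm{L^2}{f}$ used in Lemma~\ref{lemma:RitzLOD-properties}.

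For the general (merely Lipschitz) case I would show $f\in H^1$ with $\norm{H^1}{f}\lesssim\kappa^3$. Indeed $\norm{L^2}{f}\lesssim\kappa^2$ because $|1-|\sol|^2|\leq1$, $|\sol|\leq1$ and $\stabPar^2\lesssim\kappa^2$, while differentiating the cubic term gives $\nabla\bigl((1-|\sol|^2)\sol\bigr) = (1-|\sol|^2)\nabla\sol - 2\,\Real(\sol^*\nabla\sol)\,\sol$, so that $|\nabla f|\lesssim\kappa^2|\nabla\sol|$ pointwise and $\norm{L^2}{\nabla f}\lesssim\kappa^2\norm{L^2}{\nabla\sol}\lesssim\kappa^3$ by $\Honekappa{\sol}\lesssim\kappa$. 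Combining the best-approximation property of $\Ltwoproj$ with a quasi-interpolant then gives $\norm{L^2}{f-\Ltwoproj f}\lesssim h\,\norm{H^1}{f}\lesssim h\kappa^3$, whence $\Honekappa{\sol-\RitzprojLOD\sol}\lesssim\kappa^3 h^2$.

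On a convex domain I would instead prove $f\in H^2$ with $\norm{H^2}{f}\lesssim\kappa^4$ and use $\norm{L^2}{f-\Ltwoproj f}\lesssim h^2\norm{H^2}{f}$. Differentiating $f$ twice produces, besides the stabilization term $\stabPar^2\nabla^2\sol$, two kinds of contributions: the \emph{curvature} pieces carrying a factor $\nabla^2\sol$ (e.g.\ $(1-|\sol|^2)\nabla^2\sol$), whose $L^2$-norm is $\lesssim\kappa^2\norm{H^2}{\sol}$, and the \emph{quadratic-gradient} pieces carrying two factors $\nabla\sol$ (e.g.\ $\Real(\sol^*\nabla\sol)\,\nabla\sol$), whose $L^2$-norm is $\lesssim\kappa^2\norm{L^4}{\nabla\sol}^2$. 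Using $\Htwokappa{\sol}\lesssim\kappa^2$ (so $\norm{H^2}{\sol}\lesssim\kappa^2$) and $\norm{L^4}{\nabla\sol}\lesssim\kappa$ from Theorem~\ref{thm:cont_minimizer}, both kinds are bounded by $\kappa^4$; together with $\norm{L^2}{f}\lesssim\kappa^2$ and $\norm{L^2}{\nabla f}\lesssim\kappa^3$ from the previous step this gives $\norm{H^2}{f}\lesssim\kappa^4$, and the abstract estimate yields $\Honekappa{\sol-\RitzprojLOD\sol}\lesssim\kappa^4 h^3$.

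The only nonroutine point, and the main obstacle, is the $H^2$-control of the nonlinearity in the convex case: one must verify that the quadratic-gradient terms lie in $L^2$ with the correct power of $\kappa$, which is exactly what the optimal bound $\norm{L^4}{\nabla\sol}\lesssim\kappa$ secures. Everything else — the identification of $f$, the $L^2$- and $H^1$-bounds, and the two projection-error estimates — follows directly from Theorem~\ref{thm:cont_minimizer}, Proposition~\ref{proposition:LOD-abstract-est}, and the best-approximation property of $\Ltwoproj$.
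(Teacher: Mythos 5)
Your proposal is correct and follows essentially the same route as the paper's proof: you identify the same source term $f = \stabPar^2\sol + \kappa^2(1-|\sol|^2)\sol$ from $\energy'(\sol)=0$, establish the same regularity bounds $\norm{H^1}{f}\lesssim\kappa^3$ and (for convex $\Omega$) $\norm{H^2}{f}\lesssim\kappa^4$ using $|\sol|\le 1$, $\norm{L^4}{\nabla\sol}\lesssim\kappa$ and $\norm{H^2}{\sol}\lesssim\kappa^2$ from Theorem~\ref{thm:cont_minimizer}, and then combine Proposition~\ref{proposition:LOD-abstract-est} with standard $L^2$-projection estimates. The only difference is that you spell out the differentiation of the cubic term explicitly, which the paper leaves to the reader.
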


\begin{proof}
	We want to apply Proposition \ref{proposition:LOD-abstract-est}. By $\energy'(\sol) = 0 $ we have for every $\phi \in \HonekappaSpace$ that
	\begin{align}
		\abilmagstabsym{\sol}{ \testfun}  &= \stabPar^2 
		\Real  \int_\Omega \sol \testfun^* 
		\,dx 
		-
		\kappa^2 	\Real  \int_\Omega \bigl( |\sol|^2 -1 \bigr)  \sol \testfun^* 
		\,dx 
		= \ipsymLtwo{\stabPar^2 \sol - \kappa^2 \bigl( |\sol|^2 -1 \bigr)  \sol  }{\testfun} .
	\end{align}
	Since $\stabPar^2 \sol - \kappa^2 \bigl( |\sol|^2 -1 \bigr)  \sol$ is at least in $H^1$ and even in $H^2$ for convex domains, one easily verifies that for $s=0,1$
	\begin{equation}
		\norm{H^s}{\stabPar^2 \sol - \kappa^2 \bigl( |\sol|^2 -1 \bigr)  \sol }
		\lesssim
		\kappa^2 \norm{H^s}{\sol  } 
		\leq \kappa^{s+2},
	\end{equation}
	and for $s=2$
	\begin{equation}
		\norm{H^2}{\stabPar^2 \sol - \kappa^2 \bigl( |\sol|^2 -1 \bigr)  \sol }
		\lesssim
		\kappa^2 	\bigl( 	\norm{W^{1,4}}{\sol  }^2  + 	\norm{H^2}{\sol  } \bigr) 
		\leq \kappa^{4},
	\end{equation}
	where we used the bounds from Theorem \ref{thm:cont_minimizer} and in particular repeatedly $|\sol| \leq 1$. The estimate now follows with Proposition \ref{proposition:LOD-abstract-est} and standard estimates for the $L^2$-projection $\Ltwoproj$ on $P1$ finite element spaces.
\end{proof}
By collecting the previous results we obtain our final main result which shows the superapproximation properties of the LOD space, even on nonconvex domains.
\begin{theorem} \label{thm:final_bounds_LOD_H1}
	Let Assumption~\ref{ass:cinfsup} hold and let $\h$ be sufficiently small in the sense of Theorem \ref{thm:main}. If $\VShLOD$ denotes the LOD-space from Definition \ref{definition:LODspace} and if $\solhLOD \in \VShLOD$ is a corresponding minimizer of the Ginzburg--Landau energy with
	\begin{equation} 
		\energy(\solhLOD) = \inf\limits_{\testfun \in \VShLOD } E(\testfun),
	\end{equation}
	then, there is neighborhood $\Nbh \subset H^1(\Omega)$ of $\solhLOD$ and a unique minimizer $\sol \in \Nbh$ of \eqref{eq:energy_functional_times_kappa2} with $\ipsymLtwo{\solhLOD}{\ci \sol} = 0$ and such that
	\begin{align}
		\Csolinv \, \norm{L^2(\Omega)}{ \sol - \solhLOD}  + \h \, \Honekappa{ \sol - \solhLOD} &\lesssim \kappa^{3} \h^{3}   ,
	\end{align}
	and for convex domains $\Omega$ (and consequently $H^2$-solutions) it even holds
	\begin{align}
		\Csolinv \, \norm{L^2(\Omega)}{ \sol - \solhLOD}  + \h \, \Honekappa{ \sol - \solhLOD} &\lesssim \kappa^{4} \h^{4} .
	\end{align}
	\end{theorem}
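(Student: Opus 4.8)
The plan is to recognize this statement as a direct application of the abstract convergence result Theorem~\ref{thm:main} to the concrete discrete space $\VShLOD$, once the LOD construction has been shown to satisfy Assumption~\ref{ass:FEM_space}. The density property (a) follows from Lemma~\ref{lemma:RitzLOD-properties} combined with the density of $H^2$-functions with vanishing normal derivative in $H^1$, while the quantitative projection bound (b) is precisely what Lemma~\ref{lemma:RitzLOD-properties_v2} delivers for the Ritz-projection $\RitzprojLOD$, and Lemma~\ref{lemma:ritzprojorth-LOD} transfers this to the orthogonality-preserving projection $\projLtwoisolLOD$. Hence all hypotheses of Theorem~\ref{thm:main} are met, which immediately yields the neighbourhood $\Nbh$, the unique minimizer $\sol \in \Nbh$ with $\ipsymLtwo{\solhLOD}{\ci \sol} = 0$, and the abstract estimates
\begin{align*}
	\Honekappa{ \sol - \solhLOD} &\lesssim (1 + \kappa \Csol \h) \inf_{\testfunh \in \VShLOD \cap (\ci \sol)^\perp} \Honekappa{\sol - \testfunh}, \\
	\norm{L^2}{ \sol - \solhLOD} &\lesssim \h\, \Csol\, (1 + \kappa \Csol \h) \inf_{\testfunh \in \VShLOD \cap (\ci \sol)^\perp} \Honekappa{\sol - \testfunh}.
\end{align*}

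Next I would estimate the best-approximation error on the right-hand side. Choosing the admissible competitor $\testfunh = \projLtwoisolLOD \sol$ gives $\inf_{\testfunh} \Honekappa{\sol - \testfunh} \le \Honekappa{\sol - \projLtwoisolLOD \sol}$, and Lemma~\ref{lemma:ritzprojorth-LOD} bounds this in turn by $\Honekappa{\sol - \RitzprojLOD \sol}$, using the smallness $\h \lesssim \kappa^{-1}$. At this point Lemma~\ref{lemma:estimate-u-LODspace} supplies the concrete rates: $\Honekappa{\sol - \RitzprojLOD \sol} \lesssim \kappa^3 \h^2$ on general polygonal domains, and the improved $\kappa^4 \h^3$ when $\Omega$ is convex and consequently $\sol \in H^2$.

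Finally I would assemble the pieces. Since the smallness assumption of Theorem~\ref{thm:main} ensures the prefactor $(1 + \kappa \Csol \h)$ is bounded by a universal constant, the $\HonekappaSpace$-error inherits the projection rate directly, so $\h\, \Honekappa{\sol - \solhLOD} \lesssim \kappa^3 \h^3$ (respectively $\kappa^4 \h^4$ in the convex case). For the $L^2$-error, the explicit factor $\Csol$ from Theorem~\ref{thm:main} is cancelled by the $\Csolinv$-weighting on the left-hand side, so that $\Csolinv \norm{L^2}{\sol - \solhLOD} \lesssim \h\, \kappa^3 \h^2 = \kappa^3 \h^3$ (respectively $\kappa^4 \h^4$), giving a bound whose hidden constant is independent of $\Csol$. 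Adding the two contributions establishes both claimed estimates.

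The conceptual work is already carried out in the preceding lemmas, so the proof itself is essentially an assembly step. The only point needing care is the bookkeeping of the powers of $\kappa$ and $\Csol$: one must verify that the $\Csolinv$-weighting on the $L^2$-term exactly absorbs the $\Csol$ produced by the abstract $L^2$-bound, and that the smallness condition $\kappa \Csol \h \ll 1$ remains compatible with the LOD projection errors being one order higher in $\h$ than in the standard Lagrange setting---which is exactly what makes the superconvergence rates $\h^3$ and $\h^4$ (in place of $\h^2$ and $\h^3$) possible even on nonconvex domains.
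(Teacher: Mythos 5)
Your proposal is correct and follows essentially the same route as the paper's own proof: verifying Assumption~\ref{ass:FEM_space} for $\VShLOD$ via Proposition~\ref{proposition:LOD-abstract-est} and Lemmas~\ref{lemma:RitzLOD-properties}, \ref{lemma:RitzLOD-properties_v2}, \ref{lemma:ritzprojorth-LOD}, then applying Theorem~\ref{thm:main} and chaining $\Honekappa{\sol-\solhLOD}\lesssim\Honekappa{\sol-\projLtwoisolLOD\sol}\lesssim\Honekappa{\sol-\RitzprojLOD\sol}\lesssim\kappa^{s+2}\h^{s+1}$ with the $L^2$-bound $\norm{L^2}{\sol-\solhLOD}\lesssim\Csol\h\,\Honekappa{\sol-\solhLOD}$, whose $\Csol$ factor is absorbed by the $\Csolinv$-weighting. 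The bookkeeping of the $\kappa$- and $\Csol$-powers matches the paper exactly.
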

	
	\begin{proof}
		Proposition \ref{proposition:LOD-abstract-est} and Lemmas~\ref{lemma:RitzLOD-properties}, \ref{lemma:RitzLOD-properties_v2}, and \ref{lemma:ritzprojorth-LOD} guarantee that Assumption \ref{ass:FEM_space} is fulfilled for $\VShLOD$. Hence, we can apply Theorem \ref{thm:main} together with  Lemmas~\ref{lemma:ritzprojorth-LOD} and~\ref{lemma:estimate-u-LODspace} to conclude that for all sufficiently small $h$ and for $u \in H^s$ with $s\in \{1,2\}$ it holds
		\begin{align} 
			\Honekappa{ \sol - \solhLOD} \lesssim \Honekappa{\sol - \projLtwoisolLOD \sol}   \lesssim   \Honekappa{ \sol - \RitzprojLOD \sol}  \lesssim \kappa^{s+2}\, \h^{s+1} ,
		\end{align}
		and
		\begin{align}
			\norm{L^2}{ \sol - \solhLOD} \lesssim \Csol \h\, \Honekappa{ \sol - \solhLOD} \lesssim \Csol\, \kappa^{s+2}\, \h^{s+2}.
		\end{align}
	\end{proof}

	\begin{remark}
		It is worth to note that, in LOD-spaces, one can also improve the smallness condition on $\kappa \Csol \h$ required for the inf-sup condition in  Lemma~\ref{lem:du_F_inf_sup_solvability_discrete}. 
		In fact, a precise inspection of the proof leads to a smallness condition on $\kappa^2 \Csol \h^2$, which is in general weaker if $1 \lesssim \Csol$. However, since the abstract result contains a term of the form $1 + \kappa \Csol \h$, one cannot exploit this any further in the error analysis, and we thus refrain from giving the proof here.
	\end{remark}

\section*{Acknowledgments}
The authors would like to thank Qiang Du for the very helpful comments on the introduction of the paper and for pointing us to additional important literature references.

\appendix 

\section{Matrix representation of $E^{\prime\prime}(u_h)$}
\label{appendix}

In the following we briefly discuss the matrix representation of $E^{\prime\prime}(u_h)$ and the computation of the lower-most eigenvalues. 
For that, let $\mathcal{N}_h = \{ z_1, \cdots, z_N \}$ denote the set of nodes of the mesh $\mathcal{T}_h$ and let $\phi_j \in \VSh$ denote the {\it real} nodal shape function with the property
$$
\phi_j(z_k) = \delta_{jk}  \qquad \mbox{for all } 1 \le j,k \le N.
$$
Consequently, a nodal basis of $\VSh$ is given by the set
\begin{eqnarray*}
	\{ \phi_1, \ci  \phi_1 , \phi_2, \ci  \phi_2 ,  \cdots, \phi_N, \ci \phi_N \}.  
\end{eqnarray*}
We are seeking of a matrix representation of the operator $\langle \energy''(\solh) v_h , w_h \rangle $ for arbitrary $v_h,w_h \in \VSh$. By expanding $v_h$ and $w_h$ in terms of the nodal basis functions we obtain
\begin{align}
	v_h = \sum_{j=1}^N v_j^{\mbox{\tiny\rm Re}} \, \phi_j +  \sum_{j=1}^N v_j^{\mbox{\tiny\rm Im}} \, \ci \, \phi_j  
	\qquad
	\mbox{and}
	\qquad
	w_h = \sum_{j=1}^N w_j^{\mbox{\tiny\rm Re}} \, \phi_j +  \sum_{j=1}^N w_j^{\mbox{\tiny\rm Im}} \, \ci \, \phi_j  
\end{align}
for corresponding coefficient vectors $\mathbf{v}^{\mbox{\tiny\rm Re}},\mathbf{v}^{\mbox{\tiny\rm Im}},\mathbf{w}^{\mbox{\tiny\rm Re}},\mathbf{w}^{\mbox{\tiny\rm Im}}\in \mathbb{R}^N$ with
\begin{eqnarray*}
	\mathbf{v}^{\mbox{\tiny\rm Re}} =  
	\left(\begin{matrix}
		v_1^{\mbox{\tiny\rm Re}} \\
		\vdots \\
		v_N^{\mbox{\tiny\rm Re}}
	\end{matrix}\right) 
	,\quad
	\mathbf{v}^{\mbox{\tiny\rm Im}} =  
	\left(\begin{matrix}
		v_1^{\mbox{\tiny\rm Im}} \\
		\vdots \\
		v_N^{\mbox{\tiny\rm Im}}
	\end{matrix}\right), 
	\quad
	\mathbf{w}^{\mbox{\tiny\rm Re}} =  
	\left(\begin{matrix}
		w_1^{\mbox{\tiny\rm Re}} \\
		\vdots \\
		w_N^{\mbox{\tiny\rm Re}}
	\end{matrix}\right),
	\quad
	\mathbf{w}^{\mbox{\tiny\rm Im}} =  
	\left(\begin{matrix}
		w_1^{\mbox{\tiny\rm Im}} \\
		\vdots \\
		w_N^{\mbox{\tiny\rm Im}}
	\end{matrix}\right). 
\end{eqnarray*}
Hence, we can write
\begin{align}
	\langle \energy''(\solh) v_h , w_h \rangle  = 
	\left(\begin{matrix}
		\mathbf{w}^{\mbox{\tiny\rm Re}} \\
		\mathbf{w}^{\mbox{\tiny\rm Im}}
	\end{matrix}
	\right)^{\top}	
	\left( \begin{matrix}
		\mathbf{A}(u_h)^{\mbox{\tiny\rm RR}} & \mathbf{A}(u_h)^{\mbox{\tiny\rm IR}} \\
		\mathbf{A}(u_h)^{\mbox{\tiny\rm RI}} &\mathbf{A}(u_h)^{\mbox{\tiny\rm II}}
	\end{matrix} \right)
	\left(\begin{matrix}
		\mathbf{v}^{\mbox{\tiny\rm Re}} \\
		\mathbf{v}^{\mbox{\tiny\rm Im}}
	\end{matrix}
	\right),
\end{align}
for a block matrix $\mathbf{A}(u_h) \in \R^{2N \times 2N}$. Recalling the representation of $\energy''(\solh)$ as
\begin{align}
	\dualp{\energy''(\solh) v_h}{w_h}	
	&=
	\Real  \int_\Omega \bigl( \nabla v_h + \ci \kappa \MagF w_h \bigr)  \cdot \bigl( \nabla v_h + \ci \kappa \MagF w_h \bigr)^*   
	\\
	&\quad
	+
	\kappa^2 
	\bigl(  ( |\solh|^2 -1  ) v_h w_h^*  + \solh^2 v_h^* w_h^* + |\solh|^2 v_h w_h^* \bigr)
	\,dx,
\end{align}
we compute the corresponding blocks of  $\mathbf{A}(u_h) $ as follows. For the upper left block we obtain straightforwardly
\begin{eqnarray*}
	\mathbf{A}(u_h)^{\mbox{\tiny\rm RR}}_{jk}
	&=& \Real \int_\Omega \nabla \phi_k \cdot \nabla \phi_j + \kappa^2 \left( |\MagF|^2 +  2 |\solh|^2 -1 + \solh^2 \right)\phi_k \phi_j  \,dx \\
	&=&  \int_\Omega \nabla \phi_k \cdot \nabla \phi_j + \kappa^2 \left( |\MagF|^2 -1 + 3 \Real(u_h)^2 + \Imag(u_h)^2 \right)\phi_k \phi_j  \,dx.
\end{eqnarray*}
For the upper right block we have 
\begin{align}
	&\, \mathbf{A}(u_h)^{\mbox{\tiny\rm IR}}_{jk}
	\\
	&= \Real  \int_\Omega \ci \bigl( \nabla \phi_k + \kappa \MagF \ci \phi_k \bigr)  \cdot \bigl( \nabla \phi_j + \ci \kappa \MagF \phi_j \bigr)^*   
	+
	\ci \kappa^2 
	\bigl(  ( |\solh|^2 -1  )  - \solh^2 + |\solh|^2 \bigr) \phi_k \phi_j
	\,dx \\
	&=  \int_\Omega \kappa \left( \phi_j  \MagF \cdot \nabla \phi_k - \phi_k \MagF \cdot  \nabla \phi_j \right)
	-
	\kappa^2 \Real( \ci  
	\solh^2 ) \phi_k \phi_j
	\,dx \\
	&=  \int_\Omega \kappa \left( \phi_j  \MagF \cdot \nabla \phi_k - \phi_k \MagF \cdot  \nabla \phi_j \right)
	+ 2
	\kappa^2 \Real(u_h) \Imag(u_h) \, \phi_k \phi_j
	\,dx . 
\end{align}
%
Analogously, we obtain for the upper left block 
\begin{eqnarray*}
	\mathbf{A}(u_h)^{\mbox{\tiny\rm RI}}_{jk}
	&=& \int_\Omega \kappa ( \phi_k \MagF \cdot \nabla \phi_j  - \phi_j  \MagF \cdot \nabla \phi_k )
	+ 2 \kappa^2 \Real(u_h) \Imag(u_h) \, \phi_k \phi_j \,dx.
\end{eqnarray*}
Finally, the lower left block is given by
\begin{eqnarray*}
	\mathbf{A}(u_h)^{\mbox{\tiny\rm II}}_{jk}
	&=&  \int_\Omega  \nabla \phi_k \cdot  \nabla \phi_j + \kappa^2 \left(  |\MagF|^2 
	-1 + \Real(u_h)^2 + 3 \Imag(u_h)^2)\right) \phi_k \phi_j \,dx.
\end{eqnarray*}
By assembling the blocks in a standard way, we obtain the matrix $\mathbf{A}(u_h)$. Since $\langle E^{\prime\prime}(u_h)\,\cdot,\cdot\rangle$ is symmetric, $\mathbf{A}(u_h)$ must be symmetric too. This is also easily seen by the observations that
\begin{align}
	\mathbf{A}(u_h)^{\mbox{\tiny\rm RR}}_{jk}=\mathbf{A}(u_h)^{\mbox{\tiny\rm RR}}_{kj}, \qquad
	\mathbf{A}(u_h)^{\mbox{\tiny\rm II}}_{jk}=\mathbf{A}(u_h)^{\mbox{\tiny\rm II}}_{kj},
	\qquad 
	\mbox{and}
	\quad \mathbf{A}(u_h)^{\mbox{\tiny\rm IR}}_{kj} = \mathbf{A}(u_h)^{\mbox{\tiny\rm RI}}_{jk}.
\end{align}
Note that a matrix representation of $m(v_h,w_h)$ is straightforwardly given by
\begin{align}
	m(v_h,w_h) = 
	\left(\begin{matrix}
		\mathbf{w}^{\mbox{\tiny\rm Re}} \\
		\mathbf{w}^{\mbox{\tiny\rm Im}}
	\end{matrix}
	\right)^{\top}\left( \begin{matrix}
		\mathbf{M} & 0 \\
		0 & \mathbf{M}
	\end{matrix} \right)
	\left(\begin{matrix}
		\mathbf{v}^{\mbox{\tiny\rm Re}} \\
		\mathbf{v}^{\mbox{\tiny\rm Im}}
	\end{matrix}
	\right),
\end{align}
where $\mathbf{M} \in \mathbb{R}^{N \times N}$ is the conventional mass matrix with entries $\mathbf{M}_{jk}= \int_\Omega \phi_k \phi_j \,dx$. Hence, the eigenvalues of $ E^{\prime\prime}(u_h)$ in the sense of definition \eqref{eigenvalues-secvarE} are given by the system 
\begin{align}
	\left( \begin{matrix}
		\mathbf{A}(u_h)^{\mbox{\tiny\rm RR}} & \mathbf{A}(u_h)^{\mbox{\tiny\rm IR}} \\
		\mathbf{A}(u_h)^{\mbox{\tiny\rm RI}} &\mathbf{A}(u_h)^{\mbox{\tiny\rm II}}
	\end{matrix} \right)
	\left(\begin{matrix}
		\mathbf{v}^{\mbox{\tiny\rm Re}}_{\ell} \\
		\mathbf{v}^{\mbox{\tiny\rm Im}}_{\ell} 
		\end{matrix}
		\right)
		=
		\lambda_{\ell} 
		\left( \begin{matrix}
			\mathbf{M} & 0 \\
			0 & \mathbf{M}
		\end{matrix} \right)
		\left(\begin{matrix}
			\mathbf{v}^{\mbox{\tiny\rm Re}}_{\ell} \\
			\mathbf{v}^{\mbox{\tiny\rm Im}}_{\ell}
		\end{matrix}
		\right).
	\end{align}
	Due to the symmetry of the matrices, the smallest eigenvalues can be easily computed with a standard method such as the inverse power iteration.

\end{document}